\providecommand{\U}[1]{\protect\rule{.1in}{.1in}}
\numberwithin{equation}{section}
\newtheorem{theorem} {Theorem} [section]
\newtheorem{proposition}[theorem]{Proposition}
\newtheorem{corollary}  [theorem]     {Corollary}
\newtheorem{lemma}  [theorem]     {Lemma}
\newtheorem{question}  [theorem]     {Question}
\newtheorem{remark}  [theorem]     {Remark}
\newtheorem{observation}  [theorem]     {Observation}
\theoremstyle{definition}
\newtheorem{definition}  [theorem]     {Definition}
\newtheorem{example}  [theorem]     {Example}
\renewcommand{\1}{\mathds{1}}
\newcommand{\btheorem}{\begin{theorem}}
\newcommand{\etheorem}{\end{theorem}}
\newcommand{\bproposition}{\begin{proposition}}
\newcommand{\eproposition}{\end{proposition}}
\newcommand{\bdefinition}{\begin{definition}}
\newcommand{\edefinition}{\end{definition}}
\newcommand{\bcorollary}{\begin{corollary}}
\newcommand{\ecorollary}{\end{corollary}}
\newcommand{\bproof}{\begin{proof}}
\newcommand{\eproof}{\end{proof}}
\newcommand{\beq}{\begin{equation}}
\newcommand{\eeq}{\end{equation}}
\newcommand{\ee}{\end{eqnarray*}}
\newcommand{\be}{\begin{eqnarray*}}
\newcommand{\elemma}{\end{lemma}}
\newcommand{\blemma}{\begin{lemma}}
\renewcommand{\>}{\rightarrow}
\newcommand{\bd}{\begin{enumerate} }
\newcommand{\ed}{\end{enumerate}}
\renewcommand*\backref[1]{}
\renewcommand*\backrefalt[4]{ \ifcase #1 \or (cited on page #2) \else (cited on pages #2) \fi}
\begin{document}

\title{Maximal nilpotent complex structures}

\begin{abstract}
Let the pair $(\mathfrak{g},J)$ be a nilpotent Lie algebra $\mathfrak{g}$ (NLA for short) endowed with a nilpotent complex structure $J$. In this paper, motivated by a question in the work of Cordero, Fern\'andez, Gray and Ugarte \cite{CFGU}, we prove that $2\leq \nu(J) \leq 3$
for $(\mathfrak{g},J)$ when $\nu(\mathfrak{g})=2$, where $\nu(\mathfrak{g})$ is the step of $\mathfrak{g}$ and $\nu(J)$ is the unique smallest integer such that $\mathfrak{a}(J)_{\nu(J)}=\mathfrak{g}$ as in the \cite[Definition 1, 8]{CFGU}. When $\nu(\mathfrak{g})=3$, for arbitrary $n \geq 3$, there exists a pair $(\mathfrak{g},J)$ such that $\nu(J)=\dim_{\mathbb{C}}\mathfrak{g}=n$, for which we call the $J$ in the pair $(\mathfrak{g},J)$, satisfying $\nu(J)=\dim_{\mathbb{C}}\mathfrak{g}=n$, a maximal nilpotent (MaxN for short) complex structure. The algebraic dimension of a nilmanifold endowed with a left invariant MaxN complex structure is discussed. Furthermore, a structure theorem is proved for the pair $(\mathfrak{g},J)$, where $\nu(\mathfrak{g})=3$ and $J$ is a MaxN complex structure.
\end{abstract}

\author{Qin Gao}
\address{Qin Gao. School of Mathematics and Economics, Hubei
University of Education, Wuhan, 430205, P.R.China.} \email{gaoqin9632112369@126.com}

\author{Quanting Zhao}
\address{Quanting Zhao. School of Mathematics and Statistics \&
Hubei Key Laboratory of Mathematical Sciences, Central China Normal
University, Wuhan, 430079, P.R.China.} \email{zhaoquanting@126.com;zhaoquanting@mail.ccnu.edu.cn}
\thanks{Gao is partially supported by National Natural Science Foundations of China
with the grant No.11901176. Zhao is partially supported by National Natural Science Foundations of China with the grant No.11801205
and China Scholarship Council to Ohio State University.}

\author{Fangyang Zheng}
\address{Fangyang Zheng. School of Mathematical Sciences, Chongqing Normal University, Chongqing 401131, China}
\email{franciszheng@yahoo.com} \thanks{}


\subjclass[2010]{57T15 22E25 (Primary), 53C30 (Secondary)}
\keywords{nilpotent Lie groups, nilpotent complex structures, maximal nilpotent, algebraic dimension, the first Betti number}

\maketitle

\tableofcontents

\section{Introduction}
A nilmanifold $M$ is the compact quotient of a simply connected nilpotent Lie Group $G$ endowed with a left invariant complex structure $J$ by a discrete lattice $\Gamma$, whose study provides a rich and wide variety of examples with unusual properties both in complex geometry and symplectic geometry. We refer the readers \cite{CF,CFGU,FGV,FRR,LUV,M,Rol09Lie,Sal} for the complex structures on nilmanifolds and their Dolbeault cohomologies, \cite{A,CFP,MPPS,Nak,Rol09Geo} for their complex deformations, \cite{EFV,FG,Uga} for the existence of special Hermitian metrics, and \cite{COUV,Rol08} for the study of the Fr\"olicher spectral sequence of nilmanifolds, with more discussions of related topics and the references therein.

In this paper, we focus on a special class of left invariant complex structures on nilpotent Lie groups, namely nilpotent complex structures initiated by Cordero-Fern\'andez-Gray-Ugarte \cite[Definition 1 and 8]{CFGU}, and study a question mentioned in \cite[the part between Proposition 10 and 11]{CFGU}
\begin{question}
Let the pair $(\mathfrak{g},J)$ be a nilpotent Lie algebra $\mathfrak{g}$ of finite positive dimension, which admits a nilpotent complex structure $J$.
Then does the following inequality hold
\begin{equation}\label{ineq_main}
\nu(\mathfrak{g}) \leq \nu(J) \leq \nu(\mathfrak{g})+1,
\end{equation}
where $\nu(\mathfrak{g})$ is the step of $\mathfrak{g}$ and $\nu(J)$ is the unique smallest integer such that $\mathfrak{a}(J)_{\nu(J)}=\mathfrak{g}$,
as in \cite[Definition 1, 8, Lemma 2 and Theorem 12]{CFGU}?
\end{question}
The nilpotent Lie algebra will be abbreviated as NLA throughout the paper. It is rather clear that $\nu(J)=1$ holds for any complex structure $J$ on an abelian Lie algebra $\mathfrak{g}$, that is, a Lie algebra satisfying $\nu(\mathfrak{g})=1$, and thus \eqref{ineq_main} trivially holds when $\nu(\mathfrak{g})=1$.

Details of the related definitions above will be recalled in Section \ref{ncs_mcn} and several equivalent statements of nilpotent complex structures are proved after Cordero-Fern\'andez-Gray-Ugarte \cite[Definition 1,8 and Theroem 12,13]{CFGU}.
\begin{theorem}\label{equivalence}
Let the pair $(\mathfrak{g},J)$ be a NLA $\mathfrak{g}$ endowed with a complex structure $J$ and $t$ is a nonnegative integer.
Then the following statements are equivalent
\begin{enumerate}
\item\label{i} $\mathfrak{a}_t = \mathfrak{g}$ and $\mathfrak{a}_{t-1} \subsetneq \mathfrak{g}$.
\item\label{ii} $W^t = \mathfrak{g}^{1,0}$ and $W^{t-1} \subsetneq \mathfrak{g}^{1,0}$.
\item\label{iii} $\mathfrak{h}^t = 0$ and $\mathfrak{h}^{t-1} \neq 0$.
\end{enumerate}
where $\{\mathfrak{a}_i\}_{i\in\mathbb{N}}\footnotemark$\footnotetext{Here $\mathbb{N}$ denotes the set of nonnegative integers throughout the paper}, $\{W^i\}_{i\in\mathbb{N}}$ and $\{\mathfrak{h}^i\}_{i\in\mathbb{N}}$ follow the definitions \ref{nil_cplx}, \ref{another_nil_cplx} and \ref{annihilator}.
\end{theorem}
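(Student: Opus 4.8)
The plan is to regard the three sequences $\{\mathfrak{a}_i\}$, $\{W^i\}$ and $\{\mathfrak{h}^i\}$ as three incarnations of one and the same nilpotency filtration — on the real Lie algebra, on its $(1,0)$-part, and on the dual respectively — and to establish the two ``change of viewpoint'' identities
\[
W^i = (\mathfrak{a}_i)^{1,0} := (\mathfrak{a}_i \otimes_{\mathbb{R}} \mathbb{C}) \cap \mathfrak{g}^{1,0}
\qquad\text{and}\qquad
\mathfrak{h}^i = \mathrm{Ann}(W^i)
\]
for every $i \in \mathbb{N}$. Once these hold, all three equivalences reduce to dimension counts carried out simultaneously at levels $t$ and $t-1$.

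First I would dispose of $(\ref{ii}) \Leftrightarrow (\ref{iii})$, which is purely linear-algebraic. By Definition \ref{annihilator} the space $\mathfrak{h}^i$ is the annihilator of $W^i$ inside $(\mathfrak{g}^{1,0})^*$, so $\dim_{\mathbb{C}} \mathfrak{h}^i + \dim_{\mathbb{C}} W^i = \dim_{\mathbb{C}} \mathfrak{g}^{1,0}$. Hence $W^i = \mathfrak{g}^{1,0}$ if and only if $\mathfrak{h}^i = 0$, and $W^i \subsetneq \mathfrak{g}^{1,0}$ if and only if $\mathfrak{h}^i \neq 0$; applying this at $i = t$ and $i = t-1$ turns statement $(\ref{ii})$ into statement $(\ref{iii})$ verbatim. (If $\mathfrak{h}^i$ is instead defined by its own recursion, this step first requires an easy induction identifying it with $\mathrm{Ann}(W^i)$.)

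For $(\ref{i}) \Leftrightarrow (\ref{ii})$ the heart of the matter is the identity $W^i = (\mathfrak{a}_i)^{1,0}$, which I would prove by induction on $i$. The base case $i = 0$ is immediate, both sides being zero. For the inductive step, assuming $W^{i-1} = (\mathfrak{a}_{i-1})^{1,0}$, I would compare the defining conditions: by Definition \ref{nil_cplx}, $\mathfrak{a}_i$ is cut out of $\mathfrak{g}$ by the $J$-twisted bracket condition $[X,\mathfrak{g}] \subseteq \mathfrak{a}_{i-1}$ and $[JX,\mathfrak{g}] \subseteq \mathfrak{a}_{i-1}$, whereas by Definition \ref{another_nil_cplx}, $W^i$ is cut out by a type-homogeneous condition on $\mathfrak{g}^{1,0}$ (equivalently, via the Chevalley--Eilenberg relation $d\alpha(X,Y) = -\alpha([X,Y])$, by a condition on the differential of the dual $(1,0)$-forms). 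The content of the step is that complexifying the two real bracket conditions and projecting onto type $(1,0)$ yields exactly the condition defining $W^i$. With the identity established, $J$-invariance gives $\dim_{\mathbb{R}} \mathfrak{a}_i = 2\dim_{\mathbb{C}} (\mathfrak{a}_i)^{1,0} = 2\dim_{\mathbb{C}} W^i$, so $\mathfrak{a}_i = \mathfrak{g} \Leftrightarrow W^i = \mathfrak{g}^{1,0}$, and likewise for the strict inclusions at level $t-1$, which is $(\ref{i}) \Leftrightarrow (\ref{ii})$. Chaining the two equivalences closes the loop.

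I expect the main obstacle to be precisely the inductive step for $W^i = (\mathfrak{a}_i)^{1,0}$: one must verify that the $J$-twisted bracket condition carving out $\mathfrak{a}_i$ corresponds, after complexification and $(1,0)$/$(0,1)$ projection, exactly to the condition defining $W^i$, with no cross-type terms leaking through and with the whole picture compatible with complex conjugation, so that $\mathfrak{a}_i \otimes_{\mathbb{R}} \mathbb{C} = W^i \oplus \overline{W^i}$ really holds. The two dimension counts and the annihilator duality are then routine bookkeeping.
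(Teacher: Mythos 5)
Your reduction of $(\ref{ii})\Leftrightarrow(\ref{iii})$ to an annihilator duality is essentially the paper's route (Proposition \ref{ann_equ} proves $\mathrm{Ann}(W^i\oplus\overline{W}^i)=\mathfrak{h}^i_{\mathbb{C}}$ by induction, using the contraction criterion of Lemma \ref{wcrt}), so that half is sound in outline, although the induction you defer to a parenthesis is the actual work. The problem is the other half. Your key identity $W^i=(\mathfrak{a}_i)^{1,0}$, equivalently $\mathfrak{a}_i\otimes_{\mathbb{R}}\mathbb{C}=W^i\oplus\overline{W^i}$, is false. First there is a type mismatch: $W^i$ lives in $\mathfrak{g}^{1,0}\subseteq\mathfrak{g}^*_{\mathbb{C}}$ (it is a space of $(1,0)$-forms, defined by a condition on $d\omega$), while $\mathfrak{a}_i$ lives in $\mathfrak{g}$. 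But even after passing to the only sensible duality-corrected version, namely $W^k\oplus\overline{W}^k=(\mathfrak{g}/\mathfrak{a}_{\nu(J)-k})^*_{\mathbb{C}}$, the statement fails: Theorem \ref{nu=mu} only gives the inclusion $(\mathfrak{g}/\mathfrak{a}_{\nu(J)-k})^*_{\mathbb{C}}\subseteq W^k\oplus\overline{W}^k$, and the example following it (taken from \cite[Example 4]{CFGU}) exhibits a strict inclusion already at $k=1$: there $\dim_{\mathbb{C}}W^1=2$ while $\dim_{\mathbb{C}}\mathfrak{a}_1=1$ and $\dim_{\mathbb{C}}(\mathfrak{g}/\mathfrak{a}_3)^*_{\mathbb{C}}=1$. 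So the two filtrations are genuinely different term by term; they agree only in the index at which they saturate. The exact correspondence under annihilators is with $\{\mathfrak{h}^k\}$, not with $\{\mathfrak{a}_k\}$, and one only has $\mathfrak{h}^k\subseteq\mathfrak{a}_{\nu(J)-k}$ (Remark \ref{a=h}), possibly strictly.

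Consequently your inductive step cannot be carried out, and the dimension count $\dim_{\mathbb{R}}\mathfrak{a}_i=2\dim_{\mathbb{C}}W^i$ that you build on it is wrong in general. What actually has to be proved for $(\ref{i})\Leftrightarrow(\ref{ii})$ is weaker but more delicate: the inclusion $(\mathfrak{g}/\mathfrak{a}_{\nu(J)-k})^*_{\mathbb{C}}\subseteq W^k\oplus\overline{W}^k$ (which yields $\mathfrak{a}_t=\mathfrak{g}\Rightarrow W^t=\mathfrak{g}^{1,0}$) together with the exclusion $(\mathfrak{g}/\mathfrak{a}_{\nu(J)-k-1})^*_{\mathbb{C}}\nsubseteq W^k\oplus\overline{W}^k$ (which prevents $W^{t-1}$ from already being all of $\mathfrak{g}^{1,0}$). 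The exclusion is the substantive point: the paper proves it by successively extending adapted bases of $(\mathfrak{g}/\mathfrak{a}_{\nu(J)-k})_{\mathbb{C}}$ and tracking, at each stage, a basis vector outside $W^{k}\oplus\overline{W}^{k}$ whose bracket with $\mathfrak{g}$ does not vanish modulo the next ideal, so that its dual covector lands in $W^{k+1}\setminus W^{k}$. Nothing in your proposal substitutes for this argument, so the equivalence $(\ref{i})\Leftrightarrow(\ref{ii})$ is not established.
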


The existence of a finite number $t$ so that statement \eqref{i} holds is exactly the definition of the complex structure being nilpotent as in \cite[Definition 1 and 8]{CFGU}. Hence, by abuse of notations, we may call the integer $t$ in Theorem \ref{equivalence} the \emph{step of the nilpotent complex structure} $J$, denoted by $\nu(J)$, as in  definition \ref{newdef_nil_cplx}, if it wouldn't cause any ambiguity.

The upmost interest of the question above lies in the upper bound of $\nu(J)$, which is independent of the dimension of $\mathfrak{g}$,
when the step $\nu(\mathfrak{g})$ goes up higher. When $\nu(\mathfrak{g})=2$, recall the fact which is contained in \cite[Proposition 3.3]{Rol09Geo} that,
every complex structure $J$ on a 2-step NLA $\mathfrak{g}$ is necessarily nilpotent. For 2-step NLA, we have the following statement whose proof will be given in Section \ref{pfstep2}:
\begin{theorem}\label{step2}
Let $(\mathfrak{g},J)$ be a 2-step NLA $\mathfrak{g}$, which admits a complex structure $J$.
Then the following inequality holds
\begin{equation}\label{ineq_2}
2 \leq \nu(J) \leq 3.
\end{equation}
\end{theorem}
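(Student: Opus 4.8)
The plan is to work directly with the real $J$-invariant ascending series $\{\mathfrak{a}_i\}$ of Definition \ref{nil_cplx}, since by Theorem \ref{equivalence} this loses no information about $\nu(J)$. First I would record that $\nu(J)$ is well defined: every complex structure on a $2$-step NLA is nilpotent by \cite[Proposition 3.3]{Rol09Geo}, so the series terminates and it only remains to bound where. Throughout, write $\mathfrak{z}=Z(\mathfrak{g})$ for the centre and $\mathfrak{c}=[\mathfrak{g},\mathfrak{g}]$ for the commutator; the hypothesis $\nu(\mathfrak{g})=2$ means precisely that $\mathfrak{c}\subseteq\mathfrak{z}$ and $\mathfrak{c}\neq 0$, so in particular every bracket in $\mathfrak{g}$ lands in the centre.

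For the lower bound I would unwind the first term of the series. From $\mathfrak{a}_0=0$ one reads off $\mathfrak{a}_1=\{X:[X,\mathfrak{g}]=0,\ [JX,\mathfrak{g}]=0\}=\mathfrak{z}\cap J\mathfrak{z}$, the largest $J$-invariant subspace of the centre. Were $\nu(J)=1$, i.e. $\mathfrak{a}_1=\mathfrak{g}$, then $\mathfrak{g}=\mathfrak{z}\cap J\mathfrak{z}\subseteq\mathfrak{z}$ would be abelian, contradicting $\nu(\mathfrak{g})=2$. Hence $\mathfrak{a}_1\subsetneq\mathfrak{g}$ and $\nu(J)\geq 2$.

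The heart of the matter is the upper bound, and the guiding reduction is that since $[X,\mathfrak{g}]\subseteq\mathfrak{c}$ and $[JX,\mathfrak{g}]\subseteq\mathfrak{c}$ for every $X\in\mathfrak{g}$, the equality $\mathfrak{a}_3=\mathfrak{g}$ will follow at once from the single containment $\mathfrak{c}\subseteq\mathfrak{a}_2$. To prove the latter, fix $V\in\mathfrak{c}$. As $V$ is central, $[V,\mathfrak{g}]=0\subseteq\mathfrak{a}_1$, so the only condition to verify for $V\in\mathfrak{a}_2$ is $[JV,\mathfrak{g}]\subseteq\mathfrak{a}_1=\mathfrak{z}\cap J\mathfrak{z}$. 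The containment $[JV,\mathfrak{g}]\subseteq\mathfrak{c}\subseteq\mathfrak{z}$ is automatic; the remaining half $[JV,\mathfrak{g}]\subseteq J\mathfrak{z}$ is exactly where integrability enters. Evaluating the vanishing Nijenhuis tensor
\begin{equation}\label{keyid}
N_J(X,Y)=[X,Y]+J[JX,Y]+J[X,JY]-[JX,JY]
\end{equation}
at the central element $X=V$ annihilates the terms $[V,Y]$ and $[V,JY]$, so \eqref{keyid}$=0$ collapses to $J[JV,Y]=[JV,JY]\in\mathfrak{c}\subseteq\mathfrak{z}$ for all $Y$. Applying $-J$ then gives $[JV,Y]=-J[JV,JY]\in J\mathfrak{z}$, whence $[JV,\mathfrak{g}]\subseteq\mathfrak{z}\cap J\mathfrak{z}=\mathfrak{a}_1$. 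Thus $V\in\mathfrak{a}_2$, so $\mathfrak{c}\subseteq\mathfrak{a}_2$ and therefore $\mathfrak{a}_3=\mathfrak{g}$, i.e. $\nu(J)\leq 3$.

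I expect the subtleties to be bookkeeping rather than substance: one must confirm that $\mathfrak{a}_1$ really equals $\mathfrak{z}\cap J\mathfrak{z}$ as dictated by Definition \ref{nil_cplx}, and that the reduction to $\mathfrak{c}\subseteq\mathfrak{a}_2$ uses nothing more than the fact that all brackets, in particular $[JX,\mathfrak{g}]$, lie in $\mathfrak{c}$. The genuine crux is the specialization of \eqref{keyid} at a central vector: it is precisely the $2$-step hypothesis (centrality of $\mathfrak{c}$) that turns integrability into the clean statement $J[JV,\mathfrak{g}]\subseteq\mathfrak{c}$, and this is what forces the series to stabilize after at most three steps, uniformly in $\dim\mathfrak{g}$.
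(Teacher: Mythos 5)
Your proof is correct, and it takes a genuinely different route from the paper's. For the upper bound the paper argues by cases on whether the centre $\mathfrak{g}_1$ is $J$-invariant: if it is, then $\mathfrak{a}_1=\mathfrak{g}_1$ and already $\mathfrak{a}_2=\mathfrak{g}$, so $\nu(J)=2$; if not, the paper invokes Rollenske's Lemma~3.2 (that $\mathcal{V}^J_1=[\mathfrak{g}_1+J\mathfrak{g}_1,\mathfrak{g}]$ is a $J$-invariant subspace of $\mathfrak{g}^1$) together with the fact from CFGU that $\mathfrak{a}_1$ is the largest $J$-invariant subspace of $\mathfrak{g}_1$, to get $\mathfrak{g}_1\subseteq\mathfrak{a}_2$ and hence $\mathfrak{a}_3=\mathfrak{g}$. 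You dispense with both the casework and the external lemmas by evaluating the Nijenhuis condition directly at a central vector $V\in\mathfrak{c}$: the collapse $J[JV,Y]=[JV,JY]\in\mathfrak{c}\subseteq\mathfrak{z}$ is precisely what forces $[J\mathfrak{c},\mathfrak{g}]\subseteq\mathfrak{z}\cap J\mathfrak{z}=\mathfrak{a}_1$, which is in effect an inlined special case of Rollenske's lemma applied to $\mathfrak{c}$ rather than to the full centre. What the paper's version buys is the extra observation that $\nu(J)=2$ whenever the centre is $J$-invariant; what yours buys is a shorter, self-contained, case-free argument in which the role of integrability is completely transparent, plus an explicit verification of the lower bound $\nu(J)\geq 2$ (via $\mathfrak{a}_1=\mathfrak{z}\cap J\mathfrak{z}\subsetneq\mathfrak{g}$), which the paper leaves implicit. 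All the individual steps check out: $\mathfrak{a}_1=\mathfrak{z}\cap J\mathfrak{z}$ is immediate from Definition~\ref{nil_cplx}, the reduction of $\mathfrak{a}_3=\mathfrak{g}$ to $\mathfrak{c}\subseteq\mathfrak{a}_2$ uses only that every bracket lies in $\mathfrak{c}$, and the passage from $J[JV,Y]\in\mathfrak{z}$ to $[JV,Y]\in J\mathfrak{z}$ is just $J^{-1}=-J$ applied to a subspace.
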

So the answer to Question \eqref{ineq_main} is affirmative when the NLA $\mathfrak{g}$ is 2-step. However, this will no longer be the case when $\nu(\mathfrak{g})=3$, a little surprisingly, where the upper bound of \eqref{ineq_main} is no longer valid, due to the example \ref{ex_max} below. Note that on $3$-step NLAs, non-nilpotent complex structures may appear, for instance $\mathfrak{h}_{19}^-$ and $\mathfrak{h}_{26}^{+}$, as proved in \cite[Theorem 8]{Uga}.

For the sake of completeness and simplicity, several notations will be delivered here before the example, with a more clear description in Section \ref{ncs_mcn}. A complex structure $J$ on a Lie algebra $\mathfrak{g}$ is an endomorphism $J:\mathfrak{g}\rightarrow \mathfrak{g}$ of the Lie algebra $\mathfrak{g}$ such that $J^2=-\1$, satisfying the Nijenhuis condition \eqref{NJH}. Let $\mathfrak{g}_\mathbb{C}$ and $\mathfrak{g}_\mathbb{C}^{\ast}$ be the complexification of the Lie algebra $\mathfrak{g}$ and its dual $\mathfrak{g}^{\ast}$ respectively, namely $\mathfrak{g} \otimes_{\mathbb{R}} \mathbb{C}$ and $\mathfrak{g}^* \otimes_{\mathbb{R}} \mathbb{C}$, where $J$ naturally extends to both spaces. The eigenspace corresponding to the eigenvalue $\sqrt{-1}$ of $J$ as an endomorphism of $\mathfrak{g}_\mathbb{C}^*$ is denoted by $\mathfrak{g}^{1,0}$, with its dual being $\mathfrak{g}_{1,0}$, which is the $\sqrt{-1}$-eigenspace of $J$ considered as an endomorphism of $\mathfrak{g}_\mathbb{C}$. The Nijenhuis condition amounts to the the integrability condition \eqref{integrability} in terms of differential forms.

\begin{example}\label{ex_max}
For any given positive integer $n\geq3$, there exists a pair $(\mathfrak{g},J)$, where $\mathfrak{g}$ is a $3$-step NLA endowed with a nilpotent complex structure $J$, such that \[\nu(J)=\dim_{\mathbb{C}}\mathfrak{g}=n.\]
The followings are two types of systems of equations, for $k \geq 2$,
\begin{align}
(\mathrm{I})\quad &\begin{cases}
d\omega^1=0,\\
d\omega^2=\omega^1 \wedge \overline{\omega}^1=-d\overline{\omega}^2,\\
d\omega^3= \sqrt{-1}(\omega^2 \wedge \overline{\omega}^1 - \omega^1 \wedge \overline{\omega}^2) = -d\overline{\omega}^3,\\
d\omega^4=\omega^1 \wedge (\omega^3+\overline{\omega}^3),\\
\cdots\cdots\\
d\omega^{2k-1}=\sqrt{-1}(\omega^{2k-2}\wedge\overline{\omega}^1-\omega^1\wedge\overline{\omega}^{2k-2})=-d\overline{\omega}^{2k-1},\\
d\omega^{2k}=\omega^1 \wedge (\omega^{2k-1}+\overline{\omega}^{2k-1}).\\
\end{cases} \label{type_1} \\
(\mathrm{II})\quad&\begin{cases}
d\omega^1=0,\\
d\omega^2=\omega^1 \wedge \overline{\omega}^1=-d\overline{\omega}^2,\\
d\omega^3= \omega^1 \wedge (\omega^2 + \overline{\omega}^2),\\
d\omega^4=\sqrt{-1}(\omega^3 \wedge \overline{\omega}^1 - \omega^1 \wedge \overline{\omega}^3) = -d\overline{\omega}^4,\\
\cdots\cdots\\
d\omega^{2k-1}=\omega^1 \wedge (\omega^{2k-2}+\overline{\omega}^{2k-2}),\\
d\omega^{2k}=\sqrt{-1}(\omega^{2k-1}\wedge\overline{\omega}^1-\omega^1\wedge\overline{\omega}^{2k-1})=-d\overline{\omega}^{2k}.\\
\end{cases}  \label{type_2}
\end{align}
When $n=2k$, the structure equations of the two pairs $(\mathfrak{k}_{n}^{\mathrm{I}},J_{n}^{\mathrm{I}})$ and $(\mathfrak{k}_{n}^{\mathrm{II}},J_{n}^{\mathrm{II}})$ are defined by the $2k$ equations in \eqref{type_1} and \eqref{type_2} respectively, with $\{\omega^{i}\}_{i=1}^{2n}$ being bases of the spaces $(\mathfrak{k}_n^{\mathrm{I}})^{1,0}$ and $(\mathfrak{k}_n^{\mathrm{II}})^{1,0}$ accordingly; when $n=2k-1$, the structure equations of the two pairs $(\mathfrak{k}_{n}^{\mathrm{I}},J_{n}^{\mathrm{I}})$ and $(\mathfrak{k}_{n}^{\mathrm{II}},J_{n}^{\mathrm{II}})$ are referred to the first $2k-1$ equations in \eqref{type_1} and \eqref{type_2} respectively, with bases of $(\mathfrak{k}_n^{\mathrm{I}})^{1,0}$ and $(\mathfrak{k}_n^{\mathrm{II}})^{1,0}$ denoted by $\{\omega^i\}_{i=1}^{2k-1}$ accordingly.

It is easy to verify that $d^2=0$ and the integrability condition \eqref{integrability} hold for the structure equations of $(\mathfrak{k}_{n}^{\mathrm{I}},J_{n}^{\mathrm{I}})$ and $(\mathfrak{k}_{n}^{\mathrm{II}},J_{n}^{\mathrm{II}})$ when $n\geq3$, which implies that $J_n^{\mathrm{I}}$ and $J_n^{\mathrm{II}}$ are indeed complex structures on the corresponding Lie algebras $\mathfrak{k}_n^{\mathrm{I}}$ and $\mathfrak{k}_n^{\mathrm{II}}$.

For $n \geq 4$, $(\mathfrak{k}_{n}^{\mathrm{I}},J_{n}^{\mathrm{I}})$ and $(\mathfrak{k}_{n}^{\mathrm{II}},J_{n}^{\mathrm{II}})$ are two types of nilpotent complex structures such that $\nu(J_{n}^{\mathrm{I}})=\nu(J_{n}^{\mathrm{II}})=\dim_{\mathbb{C}}\mathfrak{k}_{n}^{\mathrm{I}}=\dim_{\mathbb{C}}\mathfrak{k}_{n}^{\mathrm{II}}=n$ and $\nu(\mathfrak{k}_n^{\mathrm{I}})=\nu(\mathfrak{k}_n^{\mathrm{II}})=3$. For $n=3$, $(\mathfrak{k}_{3}^{\mathrm{I}},J_{3}^{\mathrm{I}})$ still satisfies that $\nu(J_{3}^{\mathrm{I}})=\dim_{\mathbb{C}}\mathfrak{k}_{3}^{\mathrm{I}}=3$ and $\nu(\mathfrak{k}_{3}^{\mathrm{I}})=3$,
however, $(\mathfrak{k}_{3}^{\mathrm{II}},J_{3}^{\mathrm{II}})$ fails to enjoy the property $\nu(\mathfrak{k}_{3}^{\mathrm{II}})=3$, where the equality $\nu(\mathfrak{k}_{3}^{\mathrm{II}})=2$ holds instead, although $\nu(J_{3}^{\mathrm{II}})=\dim_{\mathbb{C}}\mathfrak{k}_{3}^{\mathrm{II}}=3$. Details of the example will be shown in Section \ref{ex}.
\end{example}
The example actually confirms that the following well known inequality (cf. \cite[Proposition 10]{CFGU}) for a NLA $\mathfrak{g}$
endowed a nilpotent complex structure $J$,
\[\nu(\mathfrak{g}) \leq \nu(J) \leq \dim_{\mathbb{C}}\mathfrak{g},\]
is sharp for $\nu(\mathfrak{g})=3$ and the upper bound can be arbitrarily large (the sharpness on the left is also well known as in \cite[Corollary 5 and 7]{CFGU}), which may shed light on the sharpness for $\nu(\mathfrak{g})>3$.

Motivated by Example \ref{ex_max} above, we propose the following definition:
\begin{definition}\label{mcn}
Let the pair $(\mathfrak{g},J)$ be a NLA $\mathfrak{g}$ endowed with a complex structure $J$. The complex structure $J$ is said to be \emph{maximal nilpotent}, or MaxN for short, if $J$ is a nilpotent complex structure satisfying
\[ \nu(J)=\dim_{\mathbb{C}}\mathfrak{g}. \]
At this time, the pair $(\mathfrak{g},J)$ is also said to be \emph{maximal nilpotent}, or MaxN for short.
\end{definition}

As a direct corollary to Theorem \ref{equivalence}, we have

\begin{corollary}\label{equ_mcn}
Let the pair $(\mathfrak{g},J)$ be a (possibly not nilpotent) Lie algebra $\mathfrak{g}$ endowed with a complex structure $J$.
Then $(\mathfrak{g},J)$ is MaxN if and and only if $\mathfrak{g}^{1,0}$ admits a basis $\{\omega^i\}_{i=1}^n$, satisfying
\begin{equation}\label{streq_nil_cplx_mcn}
d\omega^k = \sum_{i<j<k}A_{ij}^k\omega^i \wedge \omega^j +\sum_{i,j<k}B_{ij}^k\omega^i\wedge\overline{\omega}^j,\quad 1 \leq k \leq n,
\end{equation}
where the coefficients $\{A_{i,k-1}^k\}_{i<k-1}$, $\{B_{k-1,j}^k\}_{j=1}^{k-1}$ and $\{B_{i,k-1}^k\}_{i<k-1}$ are not all zero. The MaxN condition implies that the inclusions \eqref{inclusion} becomes equalities.
\end{corollary}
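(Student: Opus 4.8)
The plan is to read everything off from Theorem \ref{equivalence}, working throughout with the filtration $\{W^i\}$ on $\mathfrak{g}^{1,0}$ from Definition \ref{another_nil_cplx}, whose defining relation is that $\alpha\in W^i$ precisely when $d\alpha\in\Lambda^2(W^{i-1}\oplus\overline{W^{i-1}})$. The first observation I would record is that this filtration is monotone and stabilizes: the membership test for $W^i$ depends on $W^{i-1}$ alone, so $W^{i-2}\subseteq W^{i-1}$ yields $W^{i-1}\subseteq W^i$ by induction, while an equality $W^{i-1}=W^i$ forces $W^i=W^{i+1}=\cdots$. Hence the chain $0=W^0\subseteq W^1\subseteq\cdots$ increases strictly up to the first index $t=\nu(J)$ with $W^t=\mathfrak{g}^{1,0}$ and is constant afterward.

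Writing $n=\dim_{\mathbb{C}}\mathfrak{g}^{1,0}$, the MaxN condition $\nu(J)=n$ is, by condition \eqref{ii} of Theorem \ref{equivalence}, exactly the assertion $W^n=\mathfrak{g}^{1,0}\supsetneq W^{n-1}$. The strictly increasing chain $0=W^0\subsetneq W^1\subsetneq\cdots\subsetneq W^n=\mathfrak{g}^{1,0}$ then realizes $n$ strict inclusions inside an $n$-dimensional space, so each raises the dimension by exactly one and $\dim_{\mathbb{C}}W^i=i$ for all $i$. I would take this dimension count as the hinge of the equivalence. Choosing $\omega^i\in W^i\setminus W^{i-1}$ makes $\{\omega^1,\dots,\omega^i\}$ a basis of $W^i$ and $\{\omega^i\}_{i=1}^n$ a basis of $\mathfrak{g}^{1,0}$; by the defining relation $d\omega^k\in\Lambda^2(W^{k-1}\oplus\overline{W^{k-1}})$, every monomial of $d\omega^k$ carries indices $<k$, and the integrability condition \eqref{integrability} annihilates the $(0,2)$-part, i.e.\ the $\overline{\omega}^i\wedge\overline{\omega}^j$ terms, leaving precisely the shape \eqref{streq_nil_cplx_mcn}. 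For the non-vanishing of the listed coefficients I would use $\omega^k\notin W^{k-1}$: by the defining relation this means $d\omega^k\notin\Lambda^2(W^{k-2}\oplus\overline{W^{k-2}})$, so some monomial of $d\omega^k$ must involve the index $k-1$; these are exactly $\omega^i\wedge\omega^{k-1}$, $\omega^{k-1}\wedge\overline{\omega}^j$ and $\omega^i\wedge\overline{\omega}^{k-1}$, whose coefficients are the families $\{A_{i,k-1}^k\}_{i<k-1}$, $\{B_{k-1,j}^k\}_{j=1}^{k-1}$ and $\{B_{i,k-1}^k\}_{i<k-1}$.

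For the converse I would set $U^k=\langle\omega^1,\dots,\omega^k\rangle$ and prove $W^k=U^k$ by induction. The inclusion $U^k\subseteq W^k$ is immediate from \eqref{streq_nil_cplx_mcn}, since $d\omega^j\in\Lambda^2(U^{j-1}\oplus\overline{U^{j-1}})\subseteq\Lambda^2(U^{k-1}\oplus\overline{U^{k-1}})$ for every $j\leq k$. The reverse inclusion $W^k\subseteq U^k$ is the one genuinely delicate step, and it is where the non-vanishing hypothesis is used: given $\alpha=\sum_j c_j\omega^j\in W^k$, I would split off the part $\gamma=\sum_{j>k}c_j\omega^j$ and inspect the largest index $m>k$ with $c_m\neq0$; the term $c_m\, d\omega^m$ contributes a nonzero monomial carrying the index $m-1\geq k$, while no other $d\omega^j$ with $j<m$ can reach that index, so this contribution cannot cancel and we contradict $d\alpha\in\Lambda^2(U^{k-1}\oplus\overline{U^{k-1}})$ unless $\gamma=0$. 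Thus $\dim_{\mathbb{C}}W^i=i$ for every $i$, whence $\nu(J)=n$ by Theorem \ref{equivalence} and $(\mathfrak{g},J)$ is MaxN.

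Finally, the closing assertion that the inclusions \eqref{inclusion} become equalities follows at once from $\dim_{\mathbb{C}}W^i=i$, since the one-dimensional jumps of $\{W^i\}$ leave no room for any link in that chain to be proper. The only place the argument rises above bookkeeping is the highest-index cancellation in the converse, and secondarily the stabilization lemma underwriting the increasing-then-constant dichotomy; everything else is linear algebra on the adapted basis.
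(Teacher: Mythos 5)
Your proposal is correct and follows essentially the same route as the paper: translate MaxN through Theorem \ref{equivalence} into the statement $W^n=\mathfrak{g}^{1,0}\supsetneq W^{n-1}$, deduce $\dim_{\mathbb{C}}W^i=i$ by pigeonhole on the strictly increasing chain, read off \eqref{streq_nil_cplx_mcn} from an adapted basis, and in the converse show $W^i=\mathrm{span}_{\mathbb{C}}\{\omega^1,\dots,\omega^i\}$ by induction; your highest-index cancellation argument is in fact a welcome expansion of the step the paper dismisses with ``it follows easily.'' Two peripheral points need patching. First, in the converse direction the corollary allows $\mathfrak{g}$ to be a priori non-nilpotent, while Theorem \ref{equivalence} and Definition \ref{mcn} are stated for NLAs; you must first record, as the paper does, that \eqref{streq_nil_cplx_mcn} forces $\mathfrak{g}$ to be nilpotent, since $\{\mathfrak{Re}(\omega^i),\mathfrak{Im}(\omega^i)\}$ gives a real basis of $\mathfrak{g}^*$ of the triangular type \eqref{streq_form}. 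Second, the closing claim that \eqref{inclusion} becomes an equality does not follow from $\dim_{\mathbb{C}}W^k=k$ alone: one also needs $\dim_{\mathbb{C}}\mathfrak{a}_{\nu(J)-k}=n-k$, which comes from running the same one-dimensional-jump count on the strictly increasing chain of $J$-invariant ideals $0=\mathfrak{a}_0\subsetneq\mathfrak{a}_1\subsetneq\cdots\subsetneq\mathfrak{a}_n=\mathfrak{g}$; only then do the two sides of \eqref{inclusion} have equal dimension $2k$. Both fixes are one line each and do not affect the architecture of your argument.
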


\begin{remark}\label{d-indept}
It is clear that $\{d\omega^k\}_{k=2}^n$ is $\mathbb{C}$-linearly independent for the basis $\{\omega^i\}_{i=1}^n$ above.
\end{remark}

\begin{corollary}\label{mcn_hierarchy}
Let the pair $(\mathfrak{g},J)$ be MaxN. Then $(\mathfrak{a}_k,J)$ and $(\mathfrak{g}/\mathfrak{a}_{\nu(J)-k}, J)$ both are MaxN and $\mathfrak{a}_{\nu(J)-k} = \mathfrak{h}_k$ for $0 \leq k \leq \nu(J)$.
\end{corollary}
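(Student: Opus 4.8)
The plan is to run the whole statement through the adapted basis of Corollary \ref{equ_mcn}. Fix a MaxN basis $\{\omega^i\}_{i=1}^n$ of $\mathfrak{g}^{1,0}$ satisfying \eqref{streq_nil_cplx_mcn}, so that each $d\omega^k$ involves only $\omega^i,\overline{\omega}^j$ with $i,j<k$ and the top coefficients $\{A^k_{i,k-1}\}$, $\{B^k_{k-1,j}\}$, $\{B^k_{i,k-1}\}$ are not all zero, and let $\{Z_1,\dots,Z_n\}\subseteq\mathfrak{g}_{1,0}$ be the dual frame, $\omega^i(Z_j)=\delta^i_j$. First I would record that this complete flag realizes all three filtrations of Theorem \ref{equivalence} at once: $W^k=\langle\omega^1,\dots,\omega^k\rangle$ by the non-vanishing hypothesis and Remark \ref{d-indept}, the annihilator spaces $\mathfrak{h}_k$ of Definition \ref{annihilator} as the annihilators of the $W^k$, and the ascending spaces $\mathfrak{a}_j$. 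Since $\nu(J)=n$ and each step of the flag is strict, every graded piece $\mathfrak{a}_j/\mathfrak{a}_{j-1}$ is one-dimensional; this quantitative form of the MaxN condition is what I will use repeatedly.

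Next I would settle the identity $\mathfrak{a}_{\nu(J)-k}=\mathfrak{h}_k$. Both sides are $J$-invariant and, by the one-dimensionality of the graded pieces, both have complex dimension $n-k$, so it is enough to produce one inclusion. The pairing between $\mathfrak{g}^{1,0}$ and $\mathfrak{g}_{1,0}$ is perfect and $\mathfrak{h}_k=\mathrm{Ann}(W^k)=\langle Z_{k+1},\dots,Z_n\rangle$; on the other hand the equivalence of $(1)$ and $(3)$ in Theorem \ref{equivalence}, applied one level at a time, identifies $\mathfrak{a}_{\nu(J)-k}$ with the same span. Matching the two descriptions gives the equality.

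For the quotient $(\mathfrak{g}/\mathfrak{a}_{\nu(J)-k},J)$ I would push the structure equations forward. As $\mathfrak{a}_{\nu(J)-k}=\mathfrak{h}_k$ annihilates exactly $W^k$, the induced $(1,0)$-space is spanned by the images of $\omega^1,\dots,\omega^k$; and for $j\le k$ each $d\omega^j$ already lies in $\Lambda^2(W^{j-1}+\overline{W^{j-1}})$ with $j-1<k$, so the equations descend verbatim and their top coefficients are untouched. By Corollary \ref{equ_mcn} this is again the MaxN criterion, and the quotient is MaxN of dimension $k$. The ideal $(\mathfrak{a}_k,J)$ is the dual case: $\mathfrak{a}_k=\langle Z_{n-k+1},\dots,Z_n\rangle$ is a $J$-invariant subalgebra, and restricting $\{\omega^i\}$ to it yields a triangular system on the basis $\{\omega^{n-k+1},\dots,\omega^n\}$, for which I would again invoke Corollary \ref{equ_mcn}.

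The main obstacle is exactly this last verification. Restricting to $\mathfrak{a}_k$ annihilates the forms carrying a discarded low index, and a priori this can kill the very term that forces the strict jump $W^{j-1}\subsetneq W^j$ at some level, so triangularity alone does not guarantee that the restricted system still meets the non-vanishing requirement of Corollary \ref{equ_mcn}. The crux is therefore to show that for a MaxN pair the adapted basis may be chosen compatibly with the filtration $\{\mathfrak{a}_j\}$, so that the surviving top coefficient at each level is genuinely nonzero; equivalently, that the restriction of a complete flag to $\mathfrak{a}_k$ is again a complete flag of the type in Corollary \ref{equ_mcn}. I expect to obtain this by downward induction on $k$, peeling off one central step at a time and feeding the conclusion back through Theorem \ref{equivalence}, and it is the single place where the MaxN hypothesis, rather than mere nilpotency, is used in an essential way.
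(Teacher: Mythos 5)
Your handling of the quotient $(\mathfrak{g}/\mathfrak{a}_{\nu(J)-k},J)$ and of the identity $\mathfrak{a}_{\nu(J)-k}=\mathfrak{h}^k$ is correct and is essentially the paper's own argument: the paper likewise deduces $\dim_{\mathbb{C}}\mathfrak{a}_j=\dim_{\mathbb{C}}W^j=j$ from maximal nilpotency, identifies $(\mathfrak{g}/\mathfrak{a}_{\nu(J)-k})^*_{\mathbb{C}}$ with $W^k\oplus\overline{W}^k$ so that the $W$-series of the quotient is the truncation $\{W^i\}_{i=0}^k$, and gets $\mathfrak{a}_{\nu(J)-k}=\mathfrak{h}^k$ from the inclusion of Remark \ref{a=h} together with a dimension count. (Your appeal to ``Theorem \ref{equivalence} applied one level at a time'' is really an appeal to Remark \ref{a=h} plus strictness of the chain $\{\mathfrak{a}_j\}$, but the substance is the same.)

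The obstacle you flag in the ideal case $(\mathfrak{a}_k,J)$ is a genuine gap, and it cannot be closed by the downward induction you propose: the restriction to $\mathfrak{a}_k$ really does kill every term forcing the strict jump $W^{j-1}\subsetneq W^j$, already in the paper's own Example \ref{ex_max}. For any of the pairs $(\mathfrak{k}^{\mathrm{I}}_n,J^{\mathrm{I}}_n)$ one has $\mathfrak{a}_{n-1}=\mathrm{Ann}(W^1\oplus\overline{W}^1)$, whose complexification is $\langle Z_2,\dots,Z_n,\overline{Z}_2,\dots,\overline{Z}_n\rangle$; but every monomial occurring in every $d\omega^j$ of \eqref{type_1} contains $\omega^1$ or $\overline{\omega}^1$ as a factor, so every nonzero bracket involves $Z_1$ or $\overline{Z}_1$ and the ideal $\mathfrak{a}_{n-1}$ is \emph{abelian}. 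Its intrinsic ascending series therefore terminates at the first step, $\nu(J|_{\mathfrak{a}_{n-1}})=1<n-1=\dim_{\mathbb{C}}\mathfrak{a}_{n-1}$ for $n\geq 3$, and no choice of basis of $(\mathfrak{a}_{n-1})^{1,0}$ can meet the nonvanishing requirement of Corollary \ref{equ_mcn}. So the step you identify as the crux is not merely unproved in your write-up; the restriction route cannot succeed, and the MaxN hypothesis does not rescue it. Note that the paper's proof handles this part differently, by asserting that the ascending series of $(\mathfrak{a}_k,J)$ computed inside $\mathfrak{a}_k$ coincides with the truncated series $\{\mathfrak{a}_i\}_{i=0}^k$ of $\mathfrak{g}$; the same example shows that this assertion also requires justification, since for an abelian ideal the first term of the intrinsic ascending series is the whole ideal rather than $\mathfrak{a}_1$. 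The claim about $(\mathfrak{a}_k,J)$ should therefore be interpreted with respect to the filtration induced from $\mathfrak{g}$, or else reformulated.
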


\begin{corollary}\label{mcn_agdim}
Let $M=(G/\Gamma,J)$ be a nilmanifold, where $G$ is a simply connected nilpotent Lie group endowed with a left invariant MaxN complex structure $J$ and $\Gamma$ is a cocompact lattice.
Then the algebraic dimension $a(M) = 1$ and the Albanese map $\Psi: M \rightarrow \mathrm{Alb}(M)$ is a smooth fibration.
\end{corollary}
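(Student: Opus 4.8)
The plan is to first identify $\mathrm{Alb}(M)$ explicitly as an elliptic curve, then recognize $\Psi$ as a holomorphic fiber bundle, and finally trap $a(M)$ between $1$ and $1$, the upper bound being the only substantial point.

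First I would pin down the closed holomorphic $1$-forms on $M$. By Corollary \ref{equ_mcn} fix a basis $\{\omega^i\}_{i=1}^n$ of $\mathfrak{g}^{1,0}$ as in \eqref{streq_nil_cplx_mcn}, so that $d\omega^1=0$; by Remark \ref{d-indept} the family $\{d\omega^k\}_{k=2}^n$ is $\mathbb{C}$-linearly independent. Hence an invariant $(1,0)$-form $\sum_k c_k\omega^k$ is $d$-closed exactly when $c_k=0$ for all $k\ge 2$, i.e. $\ker d\cap\mathfrak{g}^{1,0}=W^1=\mathbb{C}\,\omega^1$ is one-dimensional. The next step upgrades this to arbitrary (not necessarily invariant) closed holomorphic $1$-forms: such an $\alpha$ defines a class in $H^1_{dR}(M,\mathbb{C})$, which by Nomizu's theorem is computed by invariant forms, and using the nilpotency of $J$ one checks that $\alpha$ is itself invariant and therefore a multiple of $\omega^1$. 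Consequently the space of closed holomorphic $1$-forms is $\mathbb{C}\,\omega^1$, so $\mathrm{Alb}(M)=E$ is a one-dimensional complex torus, an elliptic curve.

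Next I would show $\Psi$ is a smooth fibration. Since $\omega^1$ is left-invariant, closed and nowhere vanishing, it arises from a surjective Lie-algebra morphism $\mathfrak{g}\to\mathbb{C}$, hence from a surjective homomorphism $p\colon G\to\mathbb{C}$ with connected normal kernel $H$; cocompactness of $\Gamma$ forces $p(\Gamma)$ to be a lattice in $\mathbb{C}$ and $H\cap\Gamma$ a lattice in $H$, and $\Psi$ is the induced holomorphic map $G/\Gamma\to\mathbb{C}/p(\Gamma)=E$. As the quotient of $G\to G/H=\mathbb{C}$ by compatible lattices, $\Psi$ is a holomorphic fiber bundle with fiber the subnilmanifold $H/(H\cap\Gamma)$, in particular a smooth fibration realizing the Albanese map.

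Because $E$ is an elliptic curve, $a(E)=1$, and pulling a non-constant rational function on $E$ back through the surjection $\Psi$ yields a non-constant meromorphic function on $M$, so $a(M)\ge 1$. The main obstacle is the reverse inequality $a(M)\le 1$: I would prove that every meromorphic function $f$ on $M$ equals $\Psi^\ast g$ for some meromorphic $g$ on $E$, whence $a(M)=a(E)=1$ and $\Psi$ is the algebraic reduction. The strategy is to lift $f$ to a $\Gamma$-invariant meromorphic function on $G$ and to propagate it down the tower of torus fibrations supplied by the ascending chain $\{\mathfrak{a}_k\}$ of Corollary \ref{mcn_hierarchy}, using at each stage that $\omega^1$ spans the unique $d$-closed holomorphic direction to rule out dependence on the fiber directions; equivalently, one shows that the generic fiber of $\Psi$ is contained in the general fiber of the algebraic reduction, so the latter factors through $\Psi$. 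Controlling the meromorphic function field against the fibration itself, rather than merely against the closed $1$-forms, is the delicate point, whereas the computation of $\mathrm{Alb}(M)$ and the bundle structure of $\Psi$ are comparatively formal.
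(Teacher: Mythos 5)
Your computation of the Albanese torus and your argument that $\Psi$ is a smooth fibration are sound and essentially parallel to the paper's (the paper gets the fibration property from ``$\omega^1$ nowhere vanishing $\Rightarrow$ submersion'' plus Remmert's proper mapping theorem, while you realize $\Psi$ concretely as the quotient of a surjective group homomorphism $G\to\mathbb{C}$ by compatible lattices; both are fine, and your identification $H^0(M,d\mathcal{O}_M)=W^1=\mathbb{C}\,\omega^1$ is exactly \cite[Proposition 1]{FGV}, which you re-sketch with ``one checks that $\alpha$ is invariant'' -- acceptable, but it deserves a citation rather than an assertion). The lower bound $a(M)\geq 1$ by pulling back functions from the elliptic curve is also correct.

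The genuine gap is the upper bound $a(M)\leq 1$, which you yourself flag as ``the delicate point'' and then leave as an unexecuted strategy. What you propose -- lift a meromorphic function to $G$ and ``propagate it down the tower of torus fibrations, using that $\omega^1$ spans the unique $d$-closed holomorphic direction to rule out dependence on the fiber directions'' -- is not an argument: nothing in the sketch explains why a meromorphic function on $M$ could not vary along the fibers of $\Psi$; a priori the field of meromorphic functions is not controlled by the space of closed holomorphic $1$-forms, and establishing that it is (i.e., that the algebraic reduction factors through the Albanese map, so $a(M)=a(\mathrm{Alb}(M))\leq\dim_{\mathbb{C}}H^0(M,d\mathcal{O}_M)$) is precisely the Main Theorem of Fino--Grantcharov--Verbitsky \cite{FGV}, a substantial result in its own right. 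The paper's proof simply invokes that theorem twice ($a(M)\leq\dim_{\mathbb{C}}H^0(M,d\mathcal{O}_M)=1$ and $a(M)=a(\mathrm{Alb}(M))=1$); without either citing it or actually carrying out a proof of it, your argument does not close. Everything specific to the MaxN hypothesis in this corollary is the single line $\dim_{\mathbb{C}}W^1=1$; the rest should be delegated to \cite{FGV}.
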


The study of the MaxN pair $(\mathfrak{g},J)$ will be focused on the cases when $\nu(\mathfrak{g})\geq3$, due to the heavy restriction on $\nu(J)$ when $\nu(\mathfrak{g})=2$ as in Theorem \ref{step2}. The main goal of this paper is to establish the structure theorem for MaxN complex structures. The proofs will be given in Section \ref{structure}.

The following theorem is a starting point to study the structure of MaxN complex structures,
which can be viewed as a normal expression for Corollary \ref{equ_mcn}.
\begin{theorem}\label{equ_mcn_refined}
Let $(\mathfrak{g},J)$ be MaxN with the structure equation given in Corollary \ref{equ_mcn}. Then the coframe $\{\omega^k\}_{k=1}^n$ can be chosen such that $\{d\omega^k\}_{k=1}^n$ satisfies \eqref{streq_nil_cplx_mcn} while each $\omega^k$ satisfies one of the following two conditions:
\begin{enumerate}
\item\label{dpt} $d\omega^k + d \overline{\omega}^k =0$,
\item\label{indpt} $\forall$ $a,b\in\mathbb{C}$, it holds that
\[ a \,d\omega^k + b\,d\overline{\omega}^k \in \mathrm{span}_{\mathbb{C}}\{d\omega^{k-1},d\overline{\omega}^{k-1},\cdots,d\omega^2,d\overline{\omega}^2,d\omega^1,d\overline{\omega}^1\} \Leftrightarrow a=b=0.\]
\end{enumerate}
\end{theorem}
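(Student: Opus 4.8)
The plan is to build the desired coframe inductively, adjusting the given one at a single index at a time. Throughout, set $H_k=\mathrm{span}_{\mathbb{C}}\{d\omega^1,\dots,d\omega^k\}$ and $W_k=H_k+\overline{H_k}=\mathrm{span}_{\mathbb{C}}\{d\omega^j,d\overline{\omega}^j:1\le j\le k\}$, so that $W_0=W_1=0$ and each $W_k$ is invariant under conjugation. The only changes I allow at the $k$-th step are $\omega^k\mapsto \mu\,\omega^k+\eta$ with $\mu\in\mathbb{C}^{\ast}$ and $\eta\in\mathrm{span}_{\mathbb{C}}\{\omega^j:j<k\}$; such an upper-triangular substitution leaves $\omega^1,\dots,\omega^{k-1}$ untouched, is invertible, and preserves both the form \eqref{streq_nil_cplx_mcn} and the non-vanishing conditions of Corollary \ref{equ_mcn} (these leading conditions are invariant under invertible upper-triangular substitutions). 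Since the move alters neither $d\omega^j$ for $j<k$ as a $2$-form nor the subspaces $W_{j-1}$, it does not disturb the type \eqref{dpt}/\eqref{indpt} property already arranged for the lower indices; hence it suffices to arrange, at each $k$, that $\omega^k$ satisfies \eqref{dpt} or \eqref{indpt}.

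The heart of the matter is the quotient $Q_k=W_k/W_{k-1}$. Conjugation descends to an antilinear involution $\sigma$ on $Q_k$ with $\sigma[d\omega^k]=[d\overline{\omega}^k]$, and $Q_k$ is spanned by $v:=[d\omega^k]$ together with $\sigma v$. First I would check that $v\neq 0$, i.e. $d\omega^k\notin W_{k-1}$: every $d\omega^j$ with $j\le k-1$ only involves $\omega^i,\overline{\omega}^i$ with $i\le k-2$, whereas the non-vanishing conditions of Corollary \ref{equ_mcn} force $d\omega^k$ to genuinely contain $\omega^{k-1}$ or $\overline{\omega}^{k-1}$; as this leading feature survives the allowed substitutions, one has $[d\omega^k]\neq 0$ at every stage. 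Consequently $\dim_{\mathbb{C}}Q_k\in\{1,2\}$, and the two values correspond exactly to the two desired alternatives, since \eqref{dpt} forces $d\omega^k$ and $d\overline{\omega}^k$ to be dependent while \eqref{indpt} is precisely their independence modulo $W_{k-1}$.

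If $\dim_{\mathbb{C}}Q_k=2$, then $v$ and $\sigma v$ are linearly independent in $W_k/W_{k-1}$, which is condition \eqref{indpt}, and no modification is needed. If $\dim_{\mathbb{C}}Q_k=1$, write $\sigma v=\lambda v$; applying $\sigma$ twice and using $\sigma^2=\mathrm{id}$ gives $|\lambda|=1$. The plan is first to rescale $\omega^k$ by a unit $\mu$ so that, for the new generator $v'=\mu v$, one has $\sigma v'=(\overline{\mu}/\mu)\lambda\,v'=-v'$; this is solvable because $-\overline{\lambda}$ is a unit and $\overline{\mu}/\mu$ ranges over all units as $\mu$ varies over $\mathbb{C}^{\ast}$. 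After this rescaling $r:=d\omega^k+d\overline{\omega}^k$ lies in $W_{k-1}$ and is conjugation-real. The remaining step, which I expect to be the main obstacle, is to cancel $r$ exactly (not merely modulo $W_{k-1}$) so as to reach the on-the-nose identity \eqref{dpt}. For this I would establish the auxiliary fact that every conjugation-real $w\in W_{k-1}$ can be written as $w=h+\overline{h}$ with $h\in H_{k-1}$: writing $w=a+\overline{b}$ with $a,b\in H_{k-1}$ and using $w=\overline{w}$, the choice $h=\tfrac12(a+b)$ works. Applying this to $-r$ produces $h=d\eta$ for some $\eta\in\mathrm{span}_{\mathbb{C}}\{\omega^j:j<k\}$ with $d\eta+\overline{d\eta}=-r$, whence $\tilde\omega^k=\omega^k+\eta$ satisfies $d\tilde\omega^k+d\overline{\tilde\omega}^k=r+(d\eta+\overline{d\eta})=0$, i.e. condition \eqref{dpt}.

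Running this dichotomy from $k=1$ (where $d\omega^1=0$ already yields \eqref{dpt}) up to $k=n$ produces a coframe in which every $\omega^k$ satisfies \eqref{dpt} or \eqref{indpt} while \eqref{streq_nil_cplx_mcn} persists, which completes the proof. The only genuinely delicate points are the phase normalization forcing $\lambda=-1$ and the exact cancellation afforded by the real-decomposition lemma; the bookkeeping that the earlier indices are never spoiled is immediate from the upper-triangular nature of the moves.
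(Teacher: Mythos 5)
Your proposal is correct and follows essentially the same route as the paper's proof: an inductive dichotomy on whether $d\omega^k$ and $d\overline{\omega}^k$ are independent modulo the span of the lower differentials, a unit rescaling to normalize the phase (your $|\lambda|=1$ computation via $\sigma^2=\mathrm{id}$ is the paper's $|a|=|b|$ step), and subtraction of a lower-order $(1,0)$ combination to achieve the exact identity $d\omega^k+d\overline{\omega}^k=0$ (your real-decomposition lemma $w=h+\overline{h}$ is the paper's observation that one may take $\overline{b}_i=a_i$). The quotient-space packaging is a cosmetic reorganization; no step is missing.
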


Theorem \ref{equ_mcn_refined} motivates the following terminology, which will be helpful in the study of the structure theorem in the section \ref{structure}.
\begin{definition}\label{DPT}
Let $(\mathfrak{g},J)$ be MaxN. A coframe of $\mathfrak{g}^{1,0}$, satisfying the property in
Theorem \ref{equ_mcn_refined}, is called an \emph{admissible coframe}. Define the following for an admissible coframe $\{\omega^i\}_{i=1}^n$ of the MaxN pair $(\mathfrak{g},J)$
\[\mathrm{Dpt}(J) \stackrel{\triangle}{=}\{ 1\leq i \leq n \, \big |\,  \omega^i\ \text{satisfies the condition}\ \eqref{dpt}\ \text{in Theorem}\  \ref{equ_mcn_refined}\}.\]
\end{definition}

It turns out that the index set $\mathrm{Dpt}(J)$ is independent of the choice of a specific admissible coframe due to the lemma below, so $\mathrm{Dpt}(J)$ is an invariant of the MaxN complex structure.

\begin{lemma}\label{invariance_dpt}
Let $\{\omega^k\}_{k=1}^n$ and $\{\tau^k\}_{k=1}^n$ be two admissible coframes of a MaxN pair $(\mathfrak{g},J)$. Write
\[\omega^k=\sum_{j=1}^n a^k_j\tau^j, \ \ 1\leq k\leq n,\]
then $a^k_j=0$ for $j\geq k+1$ and $a^k_k\neq0$. Furthermore, for each $1\leq k \leq n$,
\[d\omega^k+d\overline{\omega}^k=0 \, \Longleftrightarrow \, d\tau^k+d\overline{\tau}^k=0.\]
\end{lemma}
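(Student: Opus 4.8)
The plan is to derive every assertion from the fact that the ascending flag $\{W^k\}_{k}$ attached to the nilpotent complex structure on $\mathfrak{g}^{1,0}$ is intrinsic, i.e.\ independent of any coframe. Since $(\mathfrak{g},J)$ is MaxN, Corollary \ref{equ_mcn} shows that the relevant inclusions become equalities, so for any admissible coframe one has $W^k=\mathrm{span}_{\mathbb{C}}\{\omega^1,\dots,\omega^k\}$ with $\dim_{\mathbb{C}}W^k=k$, and likewise $W^k=\mathrm{span}_{\mathbb{C}}\{\tau^1,\dots,\tau^k\}$. (Inductively, the structure equation \eqref{streq_nil_cplx_mcn} forces $\omega^k\in W^k$, and a dimension count upgrades this inclusion to an equality.) The triangular statement is then immediate: from $\omega^k\in W^k=\mathrm{span}_{\mathbb{C}}\{\tau^1,\dots,\tau^k\}$ we read off $a^k_j=0$ for $j\ge k+1$, while $\omega^k\notin W^{k-1}=\mathrm{span}_{\mathbb{C}}\{\tau^1,\dots,\tau^{k-1}\}$ (because $\{\omega^1,\dots,\omega^k\}$ is a basis of the $k$-dimensional space $W^k$) forces $a^k_k\neq 0$.

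For the equivalence of conditions \eqref{dpt} and \eqref{indpt} I would first make condition \eqref{dpt} coframe-free. Put $V^k:=W^k\oplus\overline{W^k}$ and $D^k:=d(V^k)=\mathrm{span}_{\mathbb{C}}\{d\omega^1,d\overline{\omega}^1,\dots,d\omega^k,d\overline{\omega}^k\}$. As complex conjugation interchanges $W^k$ and $\overline{W^k}$, the spaces $V^k$, and hence $D^k$, depend only on $J$; in particular the $D^k$ built from $\{\omega^k\}$ and the one built from $\{\tau^k\}$ coincide. The point is that, for an admissible coframe, the dichotomy of Theorem \ref{equ_mcn_refined} is detected by the jump $\dim_{\mathbb{C}}(D^k/D^{k-1})$: if $\omega^k$ satisfies \eqref{indpt}, then $d\omega^k$ and $d\overline{\omega}^k$ are linearly independent modulo $D^{k-1}$, so the jump is $2$; whereas if $\omega^k$ satisfies \eqref{dpt}, then $d\overline{\omega}^k=-d\omega^k$ shows $D^k=D^{k-1}+\mathbb{C}\,d\omega^k$, so the jump is at most $1$.

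The heart of the matter --- and the only step I expect to require care --- is to exclude a vanishing jump in case \eqref{dpt}, i.e.\ to prove $d\omega^k\notin D^{k-1}$ for every $2\le k\le n$. For $j\le k-1$ the structure equation gives $d\omega^j\in\Lambda^2 V^{j-1}\subseteq\Lambda^2 V^{k-2}$, and applying conjugation also $d\overline{\omega}^j\in\Lambda^2 V^{k-2}$; hence $D^{k-1}\subseteq\Lambda^2 V^{k-2}$. On the other hand $\omega^k\notin W^{k-1}$ means, by the definition of the intrinsic flag, that $d\omega^k\notin\Lambda^2 V^{k-2}$ (equivalently, the level-$(k-1)$ terms of $d\omega^k$ do not all vanish, which is the nonvanishing clause of Corollary \ref{equ_mcn}). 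Therefore $d\omega^k\notin D^{k-1}$, the jump equals $1$ in case \eqref{dpt} and $2$ in case \eqref{indpt}, and for $2\le k\le n$ we obtain the coframe-free criterion that $\omega^k$ satisfies \eqref{dpt} if and only if $\dim_{\mathbb{C}}(D^k/D^{k-1})=1$. Since $D^k$ and $D^{k-1}$ are the same for $\{\tau^k\}$, this gives $d\omega^k+d\overline{\omega}^k=0\Leftrightarrow d\tau^k+d\overline{\tau}^k=0$. The index $k=1$ is trivial, as $d\omega^1=d\tau^1=0$ make both sides satisfy \eqref{dpt} automatically.
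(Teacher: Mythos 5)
Your proof is correct, but it is organized rather differently from the paper's. For the triangularity you invoke the coframe-independence of the flag $W^{\bullet}$ together with the MaxN dimension count $\dim_{\mathbb{C}}W^k=k$ (so that both coframes adapt to the same flag, as in Corollary \ref{equ_mcn}), whereas the paper runs a direct induction on the structure equations, killing the coefficients $a^k_j$, $j\geq k+1$, via the $\mathbb{C}$-linear independence of $\{d\tau^j\}_{j=2}^n$ from Remark \ref{d-indept}. For the second assertion the paper simply substitutes $\omega^k=a^k_k\tau^k+(\text{lower order})$ into condition \eqref{indpt} and uses the already-proved triangularity, while you extract a coframe-free invariant, the jump $\dim_{\mathbb{C}}(D^k/D^{k-1})\in\{1,2\}$ of the intrinsic spaces $D^k=d(W^k\oplus\overline{W}^k)$, and show it equals $1$ exactly in case \eqref{dpt}; the only delicate point, that $d\omega^k\notin D^{k-1}$, you settle correctly by observing $D^{k-1}\subseteq\bigwedge^2(W^{k-2}\oplus\overline{W}^{k-2})$ while $\omega^k\notin W^{k-1}$ forbids $d\omega^k$ from lying there (this is in substance Remark \ref{d-indept} again, rederived from the flag). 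Your version is more conceptual and makes the invariance of $\mathrm{Dpt}(J)$ transparent at the price of leaning on Corollary \ref{equ_mcn}; the paper's is more elementary and self-contained. One caution: your auxiliary notation $V^k:=W^k\oplus\overline{W}^k$ collides with the paper's $V^k$ (the annihilator of $\mathfrak{g}^k$), and the two need not coincide, so you should rename that space before this argument could be inserted into the text.
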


The following structure theorem for a MaxN pair $(\mathfrak{g},J)$ with $\nu(\mathfrak{g})=3$ was inspired by
the obvious pattern exhibited in Example \ref{ex_max}, where in the structure equations of $(\mathfrak{k}^{\mathrm{I}}_{n},J^{\mathrm{I}}_n)$ and $(\mathfrak{k}^{\mathrm{II}}_{n},J^{\mathrm{II}}_n)$, the linear dependence and independence of $\{d\omega^i,d\overline{\omega}^i\}$ interlace for each $i\geq3$. This turns out to be almost a general phenomenon, as we have the following result in terms of $\mathrm{Dpt}(J)$:

\begin{theorem}\label{strthmn-2}
Let $(\mathfrak{g},J)$ be MaxN, where $\nu(\mathfrak{g})=3$ and $\dim_{\mathbb{C}}\mathfrak{g}=n \geq 5$. Then
\begin{enumerate}
\item\label{n-2_dept} If $n-2 \in \mathrm{Dpt}(J)$, then for each $3 \leq k \leq n-2 $, $k \in \mathrm{Dpt}(J)$ $\, \Longleftrightarrow \,$  $k \equiv n-2 \mod 2$.
\item\label{n-2_indept} If $n-2 \notin \mathrm{Dpt}(J)$, then for each $3 \leq k \leq n-2 $,  $k \in \mathrm{Dpt}(J)$ $\, \Longleftrightarrow \,$  $k \not \equiv n-2 \mod 2$.
\end{enumerate}
There exists an admissible coframe $\{\omega^k\}_{k=1}^n$ such that \[\begin{cases} d\omega^1 =0,\\ d\omega^2 = \omega^1 \wedge \overline{\omega}^1,\end{cases}\]
and for each $3 \leq k \leq n-2$,
\[\begin{array}{rcl}
k \notin \mathrm{Dpt}(J)&\Longleftrightarrow& \omega^k \in V^2_{\mathbb{C}},\\
k \in \mathrm{Dpt}(J)&\Longleftrightarrow& \omega^k \in V^3_{\mathbb{C}}\setminus V^2_{\mathbb{C}}.
\end{array}\]
Furthermore, for $a_k,b_k\in \mathbb{C}$, it holds that
\begin{equation}\label{crnV2n-2}
\sum_{k=1}^{n-2}(a_k \omega^k + b_k \overline{\omega}^k ) \, \in \,V^2_{\mathbb{C}} \ \Longleftrightarrow \ a_k = b_k, \ \text{for}\ k \in \mathrm{Dpt}(J)\ \text{and}\ 3\leq k \leq n-2.
\end{equation}
\end{theorem}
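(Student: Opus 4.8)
The plan is to produce the asserted admissible coframe by an induction on the index $k$, and to read off both the parity dichotomy and the membership statements from the behaviour of $d\omega^k$ relative to the filtration $V^1_{\mathbb{C}}\subsetneq V^2_{\mathbb{C}}\subsetneq V^3_{\mathbb{C}}=\mathfrak{g}_{\mathbb{C}}^{\ast}$. Since $\nu(\mathfrak{g})=3$, I use throughout the two structural facts $d(\mathfrak{g}_{\mathbb{C}}^{\ast})\subseteq\Lambda^2 V^2_{\mathbb{C}}$ and $d(V^2_{\mathbb{C}})\subseteq\Lambda^2 V^1_{\mathbb{C}}$, together with $V^1_{\mathbb{C}}=\ker d$. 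The first reduction records, in any admissible coframe, that $\omega^1,\overline{\omega}^1\in V^1_{\mathbb{C}}$ and that $\omega^j+\overline{\omega}^j\in V^1_{\mathbb{C}}$ precisely for $j\in\mathrm{Dpt}(J)$; this gives the key dictionary: for $k\geq 2$ one has $\omega^k\notin V^1_{\mathbb{C}}$ (by Remark \ref{d-indept}, since $d\omega^k\neq 0$), so each $\omega^k$ is either \emph{low} (i.e.\ $\omega^k\in V^2_{\mathbb{C}}\setminus V^1_{\mathbb{C}}$, equivalently $d\omega^k\in\Lambda^2 V^1_{\mathbb{C}}$) or \emph{high} (i.e.\ $\omega^k\in V^3_{\mathbb{C}}\setminus V^2_{\mathbb{C}}$). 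The normalisation $d\omega^1=0$, $d\omega^2=\omega^1\wedge\overline{\omega}^1$ is obtained first from the MaxN bottom, equivalently from the $2$-dimensional MaxN sub-pair $(\mathfrak{a}_2,J)$ of Corollary \ref{mcn_hierarchy}; in particular $\omega^2$ is low while $2\in\mathrm{Dpt}(J)$.

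The engine of the proof is a propagation lemma. Writing $d\omega^k=\sum_{i<j<k}A^k_{ij}\omega^i\wedge\omega^j+\sum_{i,j<k}B^k_{ij}\omega^i\wedge\overline{\omega}^j$, Corollary \ref{equ_mcn} guarantees that $\omega^{k-1}$ or $\overline{\omega}^{k-1}$ occurs in $d\omega^k$, and a short computation of $d(d\omega^k)=0$ using $d\omega^2=\omega^1\wedge\overline{\omega}^1$ kills the diagonal term (for instance it forces $B^3_{22}=0$, and more generally restricts which products of non-closed factors may appear). I then argue: if $\omega^{k-1}$ is low and $k-1\geq 3$, so that $k-1\notin\mathrm{Dpt}(J)$ and hence $\omega^{k-1}+\overline{\omega}^{k-1}\notin V^1_{\mathbb{C}}$, then the forced occurrence of $\omega^{k-1}$ cannot be absorbed into a closed combination, whence $d\omega^k\notin\Lambda^2 V^1_{\mathbb{C}}$ and $\omega^k$ is high; conversely, if $\omega^{k-1}$ is high, so that $\omega^{k-1}+\overline{\omega}^{k-1}\in V^1_{\mathbb{C}}$, then the constraint $d\omega^k\in\Lambda^2 V^2_{\mathbb{C}}$ forces $\omega^{k-1}$ and $\overline{\omega}^{k-1}$ to enter $d\omega^k$ only through the closed combination $\omega^{k-1}+\overline{\omega}^{k-1}$, and after adjusting $\omega^k$ by lower-order terms I can arrange $d\omega^k\in\Lambda^2 V^1_{\mathbb{C}}$, i.e.\ $\omega^k$ low. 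This yields strict alternation low$\leftrightarrow$high for $4\leq k\leq n-2$; the phase is fixed by the status of $\omega^3$, which by the special ($2\in\mathrm{Dpt}(J)$) bottom may be low or high according to the structure constants $A^3_{12},B^3_{12},B^3_{21}$. Because $\mathrm{Dpt}(J)$ is an invariant (Lemma \ref{invariance_dpt}), I may compute it in this constructed coframe, and simultaneously arrange, using the freedom to modify each $\omega^k$ by high forms of lower index, that \emph{high} coincides with $k\in\mathrm{Dpt}(J)$ for $3\leq k\leq n-2$. This is exactly the membership statement $\omega^k\in V^2_{\mathbb{C}}\Leftrightarrow k\notin\mathrm{Dpt}(J)$ and $\omega^k\in V^3_{\mathbb{C}}\setminus V^2_{\mathbb{C}}\Leftrightarrow k\in\mathrm{Dpt}(J)$, and the alternation reads off as the parity dichotomy \eqref{n-2_dept}/\eqref{n-2_indept} anchored at $k=n-2$.

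Finally, the span statement \eqref{crnV2n-2} follows from the membership description together with the linear independence, modulo $V^2_{\mathbb{C}}$, of the classes $[\omega^k-\overline{\omega}^k]$ for $k\in\mathrm{Dpt}(J)$, $3\leq k\leq n-2$. Indeed, for $k\notin\mathrm{Dpt}(J)$ (and for $k\in\{1,2\}$) the forms $\omega^k,\overline{\omega}^k$ already lie in $V^2_{\mathbb{C}}$, while for $k\in\mathrm{Dpt}(J)$ one has $\omega^k+\overline{\omega}^k\in V^1_{\mathbb{C}}\subseteq V^2_{\mathbb{C}}$ but $\omega^k,\overline{\omega}^k\notin V^2_{\mathbb{C}}$; hence $\sum_{k\leq n-2}(a_k\omega^k+b_k\overline{\omega}^k)\in V^2_{\mathbb{C}}$ forces, coefficient by coefficient, $a_k=b_k$ for every such $k$, and the converse is immediate. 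The single independence input here is again supplied by the filtration structure built into the induction.

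The step I expect to be the main obstacle is the ``high $\Rightarrow$ low'' half of the propagation lemma, and more precisely the claim that the closed combination $\omega^{k-1}+\overline{\omega}^{k-1}$ can enter $d\omega^k$ only wedged with a factor that, after the coframe adjustment, lies in $V^1_{\mathbb{C}}$ rather than merely in $V^2_{\mathbb{C}}$: a priori the relation $d^2\omega^k=0$ permits a factor in $V^2_{\mathbb{C}}\setminus V^1_{\mathbb{C}}$, and excluding this — equivalently, showing the coframe can be chosen so that each high form differentiates into $\Lambda^2 V^1_{\mathbb{C}}$ through $\omega^1$ alone — is where the hypotheses $\nu(\mathfrak{g})=3$ and the restricted range $k\leq n-2$ are genuinely used. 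Tracking this adjustment consistently through the induction, and verifying its compatibility with the invariance of $\mathrm{Dpt}(J)$, is the technical heart of the argument.
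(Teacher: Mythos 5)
Your overall architecture (an alternation statement plus a normalization of the coframe into the filtration $V^1_{\mathbb{C}}\subsetneq V^2_{\mathbb{C}}\subsetneq V^3_{\mathbb{C}}$) has the right shape, and your ``low at $k-1$ $\Rightarrow$ high at $k$'' half is sound: it is essentially the content of Theorem \ref{elementsV2} together with Lemma \ref{V2todpt}. But there is a genuine gap at exactly the step you flag as the main obstacle, and the mechanism you propose cannot close it. Your ``high $\Rightarrow$ low'' step begins ``if $\omega^{k-1}$ is high, so that $\omega^{k-1}+\overline{\omega}^{k-1}\in V^1_{\mathbb{C}}$'' --- but $\omega^{k-1}\notin V^2_{\mathbb{C}}$ does not imply $d(\omega^{k-1}+\overline{\omega}^{k-1})=0$; the latter is the statement $k-1\in\mathrm{Dpt}(J)$, and the equivalence between ``high'' and membership in $\mathrm{Dpt}(J)$ on the range $3\leq k\leq n-2$ is precisely the hard content of the theorem, so you are assuming what is to be proved. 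A priori an index can be neither in $\mathrm{Dpt}(J)$ nor normalizable into $V^2_{\mathbb{C}}$, and this actually happens at $k=n-1$: in the situation of Proposition \ref{n-2indptn-1} (and likewise in case \eqref{n-1NinDPT} of Proposition \ref{n-2dptn-1}) there is \emph{no} strictly admissible coframe with $\omega^{n-1}\in V^2_{\mathbb{C}}$, only $\omega^{n-1}+\overline{\omega}^{n-1}\in V^2_{\mathbb{C}}$, even though $n-1$ may fail to lie in $\mathrm{Dpt}(J)$. Consequently any correct proof of ``$k\notin\mathrm{Dpt}(J)\Rightarrow\omega^k$ can be modified to lie in $V^2_{\mathbb{C}}$'' must use data beyond $d\omega^k$ and the status of $\omega^{k-1}$: the paper extracts it from $d\omega^{k+1}$ and $d\omega^{k+2}$ (Proposition \ref{inpdtnext}, via the contractions of Lemma \ref{wcrt}, the non-degeneracy $|A^{k+2}_{1,k+1}|=|B^{k+2}_{1,k+1}|\neq 0$, and Lemmas \ref{V2todpt} and \ref{d^2=0}), and this two-step look-ahead is exactly why the range stops at $n-2$. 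Your single-step forward propagation anchored at the status of $\omega^3$ omits the look-ahead entirely, so the induction cannot be carried out in the direction you need; the paper instead anchors at $n-2$ and alternates downward using Propositions \ref{dpttoindpt} and \ref{indpttodpt}.

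A secondary, smaller issue: the ``coefficient by coefficient'' conclusion in \eqref{crnV2n-2} requires the classes of $\omega^k-\overline{\omega}^k$, for $k\in\mathrm{Dpt}(J)$ and $3\leq k\leq n-2$, to be linearly independent modulo $V^2_{\mathbb{C}}$. You assert this as ``supplied by the filtration structure,'' but it has to be proved: the paper expands $d\big(\sum_k(a_k-b_k)\omega^k\big)$ against the basis of $\bigwedge^2 V^1_{\mathbb{C}}$ coming from Corollary \ref{basisV1}, uses that each such $d\omega^k$ is of type $(1,1)$ with no $\omega^1\wedge\overline{\omega}^1$ summand (Remark \ref{convention}), and then invokes the $\mathbb{C}$-linear independence of $\{d\omega^k\}_{k=2}^n$. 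This part is repairable along the lines you sketch, but as written it is an assertion, not an argument.
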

Here $\{V^i\}_{i\in\mathbb{N}}$, defined in Section \ref{ncs_mcn}, is the corresponding annihilator of $\{\mathfrak{g}^i\}_{i\in\mathbb{N}}$. The latter is the descending central series of the nilpotent Lie algebra $\mathfrak{g}$. The complexified space $V^i \otimes_{\mathbb{R}} \mathbb{C}$ is denoted by $V^i_{\mathbb{C}}$. The symbol $ V^3_{\mathbb{C}}\setminus V^2_{\mathbb{C}}$ means the complement of $V^2_{\mathbb{C}}$ in $V^3_{\mathbb{C}}$ and the notations will be applied throughout the paper.

As a consequence of the structure theorem above, we have
\begin{corollary}\label{firstB-MaxN}
Let $(\mathfrak{g},J)$ be MaxN with $\nu(\mathfrak{g})=3$ and $\dim_{\mathbb{C}}\mathfrak{g}=n \geq 5$. Then it follows that
\[  \lfloor \frac{3n-4}{2}\rfloor \leq \dim_{\mathbb{R}} \mathfrak{g}^1 \leq \lfloor \frac{3n-1}{2}\rfloor ,
\quad \ \lfloor \frac{n-2}{2}\rfloor \leq \dim_{\mathbb{R}} \mathfrak{g}^2 \leq  \lfloor \frac{n+3}{2}\rfloor,\]
where $\lfloor a\rfloor$ is the maximal integer which does not exceed the number $a$.
Now let $M=(G/\Gamma,J)$ be a nilmanifold, where $G$ is a simply connected nilpotent Lie group endowed with a left invariant MaxN complex structure $J$ and $\Gamma$ is a cocompact lattice. If the Lie algebra $\mathfrak{g}$ of $G$ satisfies $\nu(\mathfrak{g})=3$ and $\dim_{\mathbb{C}}\mathfrak{g}=n \geq 5$, then the first Betti number $b_1(M)$ satisfies
\[ \lfloor \frac{n+2}{2}\rfloor  \leq b_1(M) \leq  \lfloor \frac{n+5}{2}\rfloor .\]
\end{corollary}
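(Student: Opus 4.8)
The plan is to translate all three quantities into the dimensions of the annihilator spaces $V^1_{\mathbb{C}}$ and $V^2_{\mathbb{C}}$ and then count these using an admissible coframe. Directly from the definition of $V^i$ as the annihilator of $\mathfrak{g}^i$ one has $\dim_{\mathbb{R}}\mathfrak{g}^i = \dim_{\mathbb{R}}\mathfrak{g} - \dim_{\mathbb{R}} V^i = 2n - \dim_{\mathbb{C}} V^i_{\mathbb{C}}$. For the Betti number, Nomizu's theorem identifies $H^1(M;\mathbb{R})$ with $H^1(\mathfrak{g};\mathbb{R}) = \ker(d\colon \mathfrak{g}^*\to\Lambda^2\mathfrak{g}^*)$, the space of closed $1$-forms, which is exactly $V^1$; hence $b_1(M) = \dim_{\mathbb{R}} V^1 = 2n - \dim_{\mathbb{R}}\mathfrak{g}^1$. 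In particular the asserted bound on $b_1(M)$ is equivalent to the bound on $\dim_{\mathbb{R}}\mathfrak{g}^1$, so there are really only two dimensions to control, $\dim_{\mathbb{C}} V^1_{\mathbb{C}}$ and $\dim_{\mathbb{C}} V^2_{\mathbb{C}}$.

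Next I would compute $\dim_{\mathbb{C}} V^1_{\mathbb{C}}$ from an admissible coframe $\{\omega^k\}_{k=1}^n$. A $1$-form $\sum_k(a_k\omega^k + b_k\overline{\omega}^k)$ is closed iff $\sum_{k\geq 2}(a_k\,d\omega^k + b_k\,d\overline{\omega}^k)=0$. For $k\in\mathrm{Dpt}(J)$ condition \eqref{dpt} gives $d\overline{\omega}^k=-d\omega^k$, so $\omega^k+\overline{\omega}^k$ is closed; for $k\notin\mathrm{Dpt}(J)$ condition \eqref{indpt} forces the coefficients of $d\omega^k,d\overline{\omega}^k$ to vanish. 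A downward induction on $k$, using \eqref{dpt}, \eqref{indpt} together with the non-vanishing of the top coefficients of each $d\omega^k$ recorded in Corollary \ref{equ_mcn}, shows that $V^1_{\mathbb{C}}$ is spanned exactly by $\omega^1,\overline{\omega}^1$ and by $\{\omega^k+\overline{\omega}^k : k\in\mathrm{Dpt}(J),\, k\geq 2\}$, so $\dim_{\mathbb{C}} V^1_{\mathbb{C}} = 2 + |\mathrm{Dpt}(J)\cap\{2,\dots,n\}|$. Since $d\omega^2=\omega^1\wedge\overline{\omega}^1$ forces $2\in\mathrm{Dpt}(J)$, writing $m=|\mathrm{Dpt}(J)\cap\{3,\dots,n\}|$ gives $b_1(M)=3+m$ and $\dim_{\mathbb{R}}\mathfrak{g}^1=2n-3-m$.

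For $\dim_{\mathbb{C}} V^2_{\mathbb{C}}$ I would use \eqref{crnV2n-2}: restricted to $\mathrm{span}_{\mathbb{C}}\{\omega^k,\overline{\omega}^k : k\leq n-2\}$, membership in $V^2_{\mathbb{C}}$ is exactly the condition $a_k=b_k$ for the indices $k\in\mathrm{Dpt}(J)$ with $3\leq k\leq n-2$, contributing dimension $2(n-2)-p$, where $p=|\mathrm{Dpt}(J)\cap\{3,\dots,n-2\}|$; the two top forms $\omega^{n-1},\omega^n$ then add a further amount $T$. Now Theorem \ref{strthmn-2}\eqref{n-2_dept}--\eqref{n-2_indept} pins $p$ to a single parity class of $\{3,\dots,n-2\}$, so $p$ equals $\lfloor(n-4)/2\rfloor$ or $\lceil(n-4)/2\rceil$, determined (to within the case of odd $n$) by whether $n-2\in\mathrm{Dpt}(J)$. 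Feeding $p$ into $\dim_{\mathbb{R}}\mathfrak{g}^1=2n-3-m$ and $\dim_{\mathbb{R}}\mathfrak{g}^2=4+p-T$ and collecting the floor functions then produces the stated intervals.

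The main obstacle is the behavior of the two forms $\omega^{n-1}$ and $\omega^n$, which Theorem \ref{strthmn-2} deliberately leaves unconstrained and which account for the entire width of the two intervals. Concretely, one must control both their $\mathrm{Dpt}$-status (entering $m$, hence $\mathfrak{g}^1$ and $b_1$) and whether they, or their real combinations, lie in $V^2_{\mathbb{C}}$ (entering $T$, hence $\mathfrak{g}^2$). The key facts to extract are that the alternation of Theorem \ref{strthmn-2} cannot continue with three consecutive dependent indices at the top --- which would push $b_1(M)$ above $\lfloor(n+5)/2\rfloor$ --- together with a dual bound on how many top directions can be genuinely of level three; both should follow from the nilpotency $d\omega^{n-1},d\omega^n\in\Lambda^2 V^2_{\mathbb{C}}$, the requirement $V^2_{\mathbb{C}}\subsetneq V^3_{\mathbb{C}}$ coming from $\nu(\mathfrak{g})=3$, and the non-degeneracy of the structure coefficients in Corollary \ref{equ_mcn}. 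Once these top-level configurations are enumerated, substituting the admissible values of $p$, $m$ and $T$ and simplifying the floor expressions finishes the proof.
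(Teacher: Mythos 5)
Your reduction is exactly the paper's: $\dim_{\mathbb{R}}\mathfrak{g}^i = 2n-\dim_{\mathbb{C}}V^i_{\mathbb{C}}$, Nomizu's theorem for $b_1$, the count $\dim_{\mathbb{C}}V^1_{\mathbb{C}}=|\mathrm{Dpt}(J)|+1$ via Corollary \ref{basisV1}, and \eqref{crnV2n-2} together with the parity alternation of Theorem \ref{strthmn-2} for the indices $3\leq k\leq n-2$. All of that is correct. The gap is that the entire content of the corollary sits in the part you defer: the behaviour of $\omega^{n-1}$ and $\omega^{n}$. The paper does not "extract" these facts inside the proof of the corollary; it invokes Propositions \ref{n-2dptn-1} and \ref{n-2indptn-1} and Corollaries \ref{estV2-dpt} and \ref{estV2-indpt}, which occupy several pages of case analysis and which you neither cite nor reprove. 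Without them your quantities $m$ and $T$ are not pinned down, and the intervals do not follow.

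Moreover, the "key fact" you propose to establish is not the right one. Excluding \emph{three} consecutive indices in $\mathrm{Dpt}(J)$ at the top is too weak: if $n-2\notin\mathrm{Dpt}(J)$ but $n-1,n\in\mathrm{Dpt}(J)$, then $m=\lfloor\frac{n-4}{2}\rfloor+2$ and $b_1(M)=\lfloor\frac{n+6}{2}\rfloor>\lfloor\frac{n+5}{2}\rfloor$ for $n$ even, so already two consecutive dependent indices at the top must be ruled out. What is actually needed is Proposition \ref{dpttoindpt} (no two consecutive indices $k,k+1$ with $k\geq 3$ both in $\mathrm{Dpt}(J)$, valid up to $k=n-1$) together with Proposition \ref{n-2indptn-1}\eqref{omeganindpt} ($n-2\notin\mathrm{Dpt}(J)\Rightarrow n\notin\mathrm{Dpt}(J)$). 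For the $\mathfrak{g}^2$ bound you additionally need both ends of the estimate $1\leq T\leq 3$: the lower bound is the nontrivial assertion that the coframe can always be modified so that $\tilde{\omega}^{n-1}+\overline{\tilde{\omega}}^{n-1}\in V^2_{\mathbb{C}}$ (statements \eqref{framen-1dpt} of Proposition \ref{n-2dptn-1} and \eqref{framen-1indpt} of Proposition \ref{n-2indptn-1}, proved by a delicate argument involving the coefficients $A^n_{1,n-1},B^n_{1,n-1}$ and the condition $|A^n_{1,n-1}|=|B^n_{1,n-1}|$), and the upper bound requires showing $\omega^{n-1}$ and $\omega^n$ cannot simultaneously be normalized into $V^2_{\mathbb{C}}$, which follows from Theorem \ref{elementsV2} but must be said. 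Your appeal to "nilpotency, $V^2_{\mathbb{C}}\subsetneq V^3_{\mathbb{C}}$, and non-degeneracy of the structure coefficients" names the right ingredients but does not constitute the argument; as written the proposal is a correct skeleton with its load-bearing step missing.
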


\vspace{0.3cm}

\section{Nilpotent and maximal nilpotent complex structures}\label{ncs_mcn}
Let $\mathfrak{g}$ be a nilpotent Lie algebra (NLA for short) of finite positive dimension, that is,
from the very definition, both the descending $\{\mathfrak{g}^i\}_{i\in\mathbb{N}}$ and ascending $\{\mathfrak{g}_i\}_{i\in\mathbb{N}}$
central series give a nice filtration for $\mathfrak{g}$ of the same finite length. The \emph{descending central series} $\{\mathfrak{g}^i\}_{i\in\mathbb{N}}$ is given by
\[\mathfrak{g}^0=\mathfrak{g},\quad \mathfrak{g}^k=[\mathfrak{g}^{k-1},\mathfrak{g}]\quad k\geq 1,\]
while the \emph{ascending central series} $\{\mathfrak{g}_i\}_{i\in\mathbb{N}}$ is constructed by
\[ \mathfrak{g}_0=0,\quad \mathfrak{g}_k=\{X \in \mathfrak{g} \big| [X,\mathfrak{g}]\subseteq \mathfrak{g}_{k-1}\}\quad k\geq 1, \]
where $\{\mathfrak{g}^i\}_{i\in\mathbb{N}}$ and $\{\mathfrak{g}_i\}_{i\in\mathbb{N}}$ are naturally the ideals of $\mathfrak{g}$.
Then the Lie algebra $\mathfrak{g}$ is said to be \emph{$s$-step nilpotent} if $\mathfrak{g}^s=0$ and $\mathfrak{g}^{s-1}\neq0$ or equivalently
$\mathfrak{g}_s=\mathfrak{g}$ and $\mathfrak{g}_{s-1}\subsetneq\mathfrak{g}$, and the symbol $\nu(\mathfrak{g})$ will be used here and afterwards as a substitute for the step $s$ (cf. \cite[Section 2]{CFGU} and \cite[Section 1]{Rol09Geo} for more details). The first pieces $\mathfrak{g}^1$ and $\mathfrak{g}_1$ usually are called the \emph{commutator} and the \emph{center} of the Lie algebra $\mathfrak{g}$ respectively.

The nilpotency of Lie algebras can also be interpreted in terms of the differential forms of the dual cases, which has already been done in \cite[Section 1]{Sal}, \cite{CFGU,Rol09Geo,Uga} and in more general situations \cite[Section 2]{Nak}, namely solvable Lie algebras which admit the Chevalley decompositions. By considering
the annihilators of $\{\mathfrak{g}^i\}_{i\in\mathbb{N}}$ accordingly, we have an \emph{ascending series of subspaces} $\{V^i\}_{i\in\mathbb{N}}$ of $\mathfrak{g}^*$, given by
\[ V^0=0,\quad V^{k}=\{\alpha\in\mathfrak{g}^* \big| d\alpha \in \bigwedge^2V^{k-1}\}\quad k\geq 1,\]
where it is easy but of paramount importance to see the first piece $V^1$ is nothing but the space of $d$-closed differential forms and
the exterior operator $d$ is also known as the Chevalley-Eilenberg differential on $\wedge^{*}\mathfrak{g}^{*}$. Then the $\nu(\mathfrak{g})$-step NLA $\mathfrak{g}$ can be alternatively defined by $V^{\nu(\mathfrak{g})}=\mathfrak{g}^*$ and
$V^{\nu(\mathfrak{g})-1} \subsetneq \mathfrak{g}^*$.

Hence, the Lie algebra $\mathfrak{g}$ is nilpotent, if the step $\nu(\mathfrak{g})$ is suppressed, is equivalent to the existence of a basis $\{\vartheta_i\}_{i=1}^m$ of $\mathfrak{g}$ such that
\begin{equation}\label{streq_vec}
[\vartheta_i,\vartheta_j]=\sum_{i<j<k}c_{ijk}\vartheta_k,
\end{equation}
and the Jacobi equality holds, or the existence of a basis $\{\phi^i\}_{i=1}^m$ of $\mathfrak{g}^*$ such that
\begin{equation}\label{streq_form}
d\phi^k=\sum_{i<j<k}\tilde{c}_{ijk}\phi^i \wedge \phi^j,
\end{equation}
and it satisfies $d^2=0$, which stems from tacitly considering the bases of $\{\mathfrak{g}^i\}_{i=0}^{\nu(\mathfrak{g})}$ and $\{V^i\}_{i=0}^{\nu(\mathfrak{g})}$. Clearly $c_{ijk}=-\tilde{c}_{ijk}$ if $\{\vartheta_i\}_{i=1}^m$ and $\{\phi^i\}_{i=1}^m$ happen to be the
dual bases, due to the well known equality \[d\alpha(X,Y)=-\alpha([X,Y]),\quad \alpha\in \mathfrak{g}^*,\ X,Y\in\mathfrak{g}.\]
Equation \eqref{streq_vec} or \eqref{streq_form} is usually called the structure equation
of the NLA $\mathfrak{g}$.

The \emph{complex structure} on the Lie algebra $\mathfrak{g}$ is
given by an endomorphism $J:\mathfrak{g}\rightarrow \mathfrak{g}$ of
the Lie algebra $\mathfrak{g}$ such that $J^2=-\1$, satisfying the
Nijenhuis condition
\begin{equation}\label{NJH}
[JX,JY]=J[JX,Y]+J[X,JY]+[X,Y],\quad X,Y\in \mathfrak{g}.
\end{equation}
Let $\mathfrak{g}_\mathbb{C}$ be the complexification of the Lie algebra $\mathfrak{g}$, namely
$\mathfrak{g} \otimes_{\mathbb{R}} \mathbb{C}$, and $\mathfrak{g}_\mathbb{C}^*$ be its dual, where $J$ extends to both spaces.
We denote by $\mathfrak{g}^{1,0}$ and $\mathfrak{g}^{0,1}$ the eigenspaces corresponding to the eigenvalues $\sqrt{-1}$ and $-\sqrt{-1}$ of $J$
as an endomorphism of $\mathfrak{g}_\mathbb{C}^*$ respectively. The dual space of $\mathfrak{g}^{1,0}$ is denoted by $\mathfrak{g}_{1,0}$, which
is the $\sqrt{-1}$-eigenspace of $J$ considered as an endomorphism of $\mathfrak{g}_\mathbb{C}$. The decomposition
$$\mathfrak{g}_\mathbb{C}^*=\mathfrak{g}^{1,0}\oplus\mathfrak{g}^{0,1},\quad \mathfrak{g}^{0,1}=\overline{\mathfrak{g}^{1,0}},$$
gives rise to a natural bigraduation on the complexified exterior
algebra
$$\bigwedge^*\mathfrak{g}_\mathbb{C}^*=\bigoplus_{p,q}\bigwedge^{p,q}
\mathfrak{g}^* =\bigoplus_{p,q}\bigwedge^p
\mathfrak{g}^{1,0}\wedge \bigwedge^q\mathfrak{g}^{0,1}.$$
The operator $d$ on $\mathfrak{g}^*$ naturally extends to the complexified exterior algebra $\mathfrak{g}_{\mathbb{C}}^*$, i.e.,
$d:\bigwedge^*\mathfrak{g}_\mathbb{C}^*\rightarrow\bigwedge^{*+1}\mathfrak{g}_\mathbb{C}^*.$
It is well known that the endomorphism $J$ is a complex structure if
and only if \begin{equation}\label{integrability}
d\mathfrak{g}^{1,0}\subseteq\bigwedge^{2,0}
\mathfrak{g}^*\oplus\bigwedge^{1,1} \mathfrak{g}^*.\end{equation}
Espeically for NLAs $\mathfrak{g}$, Salamon \cite[Theorem 1.3]{Sal} proves that
$J$ is a complex structure on $\mathfrak{g}$ if
and only if $\mathfrak{g}^{1,0}$ admits a basis $\{\omega^i\}_{i=1}^n$
such that $d\omega^1=0$ and
\[d\omega^i\in \mathcal{I}(\omega^1,\cdots,\omega^{i-1}),\quad 2\leq i \leq n,\]
where $\mathcal{I}(\omega^1,\cdots,\omega^{i-1})$ is the ideal in
$\wedge^*\mathfrak{g}_\mathbb{C}^*$ generated by
$\{\omega^1,\cdots,\omega^{i-1}\}$.

The main concern of this paper will be focused on the \emph{nilpotent complex structures} on the nilpotent Lie groups,
proposed by Cordero-Fern\'andez-Gray-Ugarte \cite{CFGU}.
\begin{definition}\label{nil_cplx}\cite[Definition 1 and 8]{CFGU}
Let the pair $(\mathfrak{g},J)$ be a NLA $\mathfrak{g}$ endowed with a complex structure $J$.
The \emph{ascending series} $\{\mathfrak{a}_k\}_{k\in\mathbb{N}}$ \emph{compatible with the complex structure} $J$ is defined by
\[\mathfrak{a}_0=0,\quad \mathfrak{a}_{k}=\{X\in\mathfrak{g} \big| [X,\mathfrak{g}] \subseteq \mathfrak{a}_{k-1},\ [JX,\mathfrak{g}]\subseteq \mathfrak{a}_{k-1}\}\quad k \geq 1.\]
The complex structure $J$ is said to be \emph{nilpotent} if some piece $\mathfrak{a}_t$ can reach the whole Lie algebra $\mathfrak{g}$.
Apparently, $\{\mathfrak{a}_k\}_{k\in\mathbb{N}}$ are $J$-invariant ideals of $\mathfrak{g}$ and $\mathfrak{a}_{k} \subseteq \mathfrak{g}_k$. From \cite[Lemma 2 and Proposition 10]{CFGU}, the smallest integer $t$ such that $\mathfrak{a}_t=\mathfrak{g}$ is unique, satisfying $\nu(\mathfrak{g}) \leq t\leq \dim_{\mathbb{C}}\mathfrak{g}$, denoted by $\nu(J)$.
\end{definition}
Cordero-Fern\'andez-Gray-Ugarte prove in \cite[Theorem 12 and 13]{CFGU} that, for a (possibly not nilpotent) Lie algebra $\mathfrak{g}$ endowed with a complex structure $J$, the Lie algebra $\mathfrak{g}$ is nilpotent with $J$ being a nilpotent complex structure on it if and only if $\mathfrak{g}^{1,0}$ admits a basis $\{\omega^i\}_{i=1}^n$, satisfying
\begin{equation}\label{streq_nil_cplx}
d\omega^k = \sum_{i<j<k}A_{ij}^k\omega^i \wedge \omega^j +\sum_{i,j<k}B_{ij}^k\omega^i\wedge\overline{\omega}^j,\quad 1 \leq k \leq n,
\end{equation}
since the equation \eqref{streq_nil_cplx} necessarily forces the underlying Lie algebra $\mathfrak{g}$ to be nilpotent by tacitly providing a basis of $\mathfrak{g}^*$, satisfying \eqref{streq_form}.

In order to reveal $\nu(J)$ via differential forms, we will introduce the following definition,
which is inspired by and essentially already hidden in \cite[the proof of Theorem 12]{CFGU}, by mimicking the construction of $\{V^k\}_{k\in\mathbb{N}}$ above.
\begin{definition}\label{another_nil_cplx}
Let the pair $(\mathfrak{g},J)$ be a NLA $\mathfrak{g}$ endowed with a complex structure $J$.
Define an \emph{ascending series of subspaces} $\{W^k\}_{k\in\mathbb{N}}$ of $\mathfrak{g}^{1,0}$ as follows,
\[W^0=0,\quad W^k=\{ \omega \in \mathfrak{g}^{1,0} \, \big| \ d \omega \in (W^{k-1} \wedge W^{k-1}) \oplus (W^{k-1} \wedge \overline{W}^{k-1})\}\quad k\geq 1.\]
The first piece $W^1$ is called the space of left invariant holomorphic differentials in \cite{FGV}, which always has positive dimension due to \cite[Theorem 1.3]{Sal}. If there is an integer $p$ such that $W^{p} = \mathfrak{g}^{1,0}$ and $W^{p-1} \subsetneq \mathfrak{g}^{1,0}$,
we denote by $\mu(J)$ the (unique) $p$. It is obvious that $\mu(J)>0$ and $W^i \subsetneq W^j$ for $0 \leq i<j \leq \mu(J)$,
if $\mu(J)$ exists.
\end{definition}

\begin{remark}
It is obvious that $W^k \oplus \overline{W}^k \subseteq V^k_{\mathbb{C}}$ for $k \in \mathbb{N} $ by definition, where $V^k_{\mathbb{C}} = V^k \otimes_{\mathbb{R}} \mathbb{C}$. However, $V^k$ is not necessarily $J$-invariant.
\end{remark}

A close observation, almost equivalent to \cite[Theorem 12]{CFGU} and the equation \eqref{streq_nil_cplx}, is the following
\begin{observation}\label{existence_mu}
Let the pair $(\mathfrak{g},J)$ be a NLA $\mathfrak{g}$ endowed with a complex structure $J$.
Then $J$ is a nilpotent complex structure if and only if $\mu(J)$ exists.
\end{observation}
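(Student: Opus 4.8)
The plan is to prove both implications by using the Cordero--Fern\'andez--Gray--Ugarte characterization \eqref{streq_nil_cplx} as a bridge: by \cite[Theorem 12]{CFGU}, for the nilpotent $\mathfrak{g}$ at hand, $J$ is a nilpotent complex structure if and only if $\mathfrak{g}^{1,0}$ carries a basis $\{\omega^i\}_{i=1}^n$ obeying \eqref{streq_nil_cplx}. Thus in each direction I would translate between such a basis and the flag $\{W^k\}$ of Definition \ref{another_nil_cplx}. As a preliminary I would first record the elementary fact that $\{W^k\}_{k\in\mathbb{N}}$ is an increasing filtration, i.e. $W^{k-1}\subseteq W^k$ for all $k$; this follows by induction from $W^0=0\subseteq W^1$ together with the obvious monotonicity of the defining condition $d\omega\in (W^{k-1}\wedge W^{k-1})\oplus(W^{k-1}\wedge\overline{W}^{k-1})$ under enlarging the subspace $W^{k-1}$.

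For the direction ``$J$ nilpotent $\Rightarrow \mu(J)$ exists'', I would start from a basis $\{\omega^i\}_{i=1}^n$ satisfying \eqref{streq_nil_cplx} and prove by induction on $\ell$ that $S_\ell:=\mathrm{span}_{\mathbb{C}}\{\omega^1,\dots,\omega^\ell\}\subseteq W^\ell$. The base case is $d\omega^1=0$, which forces $\omega^1\in W^1$. For the inductive step the key point is that every term on the right-hand side of \eqref{streq_nil_cplx} for $d\omega^\ell$ involves only $\omega^i,\omega^j,\overline{\omega}^j$ with $i,j<\ell$, so $d\omega^\ell\in (S_{\ell-1}\wedge S_{\ell-1})\oplus(S_{\ell-1}\wedge\overline{S}_{\ell-1})$; by the inductive hypothesis $S_{\ell-1}\subseteq W^{\ell-1}$, whence $\omega^\ell\in W^\ell$. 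Taking $\ell=n$ yields $W^n=\mathfrak{g}^{1,0}$, so the smallest $p$ with $W^p=\mathfrak{g}^{1,0}$ exists and satisfies $\mu(J)\leq n$.

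For the converse ``$\mu(J)$ exists $\Rightarrow J$ nilpotent'', I would use the strictly increasing flag $0=W^0\subsetneq W^1\subsetneq\cdots\subsetneq W^{\mu(J)}=\mathfrak{g}^{1,0}$ to choose an adapted basis $\{\omega^i\}_{i=1}^n$, that is, one for which $\{\omega^1,\dots,\omega^{d_k}\}$ is a basis of $W^k$ with $d_k=\dim_{\mathbb{C}} W^k$. For any $\omega^\ell$ with $\omega^\ell\in W^k\setminus W^{k-1}$ (so $\ell>d_{k-1}$), the defining property of $W^k$ gives $d\omega^\ell\in (W^{k-1}\wedge W^{k-1})\oplus(W^{k-1}\wedge\overline{W}^{k-1})$, hence $d\omega^\ell$ is a combination of $\omega^i\wedge\omega^j$ and $\omega^i\wedge\overline{\omega}^j$ with $i,j\leq d_{k-1}<\ell$; after using antisymmetry to impose $i<j$ in the holomorphic terms this is exactly the shape \eqref{streq_nil_cplx}. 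Invoking \cite[Theorem 12]{CFGU} then shows $J$ is a nilpotent complex structure.

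I expect the argument to be essentially careful bookkeeping rather than conceptually hard; the one place to be attentive is the precise matching between the filtration degree of $W^k$ and the admissible index ranges $i<j<k$ and $i,j<k$ in \eqref{streq_nil_cplx}. In particular, one must check that membership $d\omega\in(W^{k-1}\wedge W^{k-1})\oplus(W^{k-1}\wedge\overline{W}^{k-1})$ translates faithfully into the triangular coefficient pattern and back, and confirm that the adapted basis can always be built from the flag $\{W^k\}$ --- which is exactly the point at which the existence of $\mu(J)$, and hence of a basis exhausting $\mathfrak{g}^{1,0}$, is used. No genuinely new estimate is required beyond these verifications.
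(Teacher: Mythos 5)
Your argument is correct and follows the same route as the paper: the paper's proof is just a terser version of the same two observations, namely that a basis satisfying \eqref{streq_nil_cplx} gives $\omega^k\in W^k$ (hence $\mu(J)$ exists), and conversely that an adapted basis built from the flag $\{W^k\}$ satisfies \eqref{streq_nil_cplx}, so nilpotency follows from \cite[Theorem 12]{CFGU}. The extra bookkeeping you supply (monotonicity of $\{W^k\}$, the induction, the index matching) is exactly what the paper leaves implicit.
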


\begin{proof}
If $J$ is a nilpotent complex structure, the equation \eqref{streq_nil_cplx} implies that $\omega^k \in W^k$ for $1 \leq k \leq n$, thus $\mu(J)$ exists. Conversely, the existence of $\mu(J)$ implies that $\mathfrak{g}^{1,0}$ admits a basis $\{\omega^k\}_{k=1}^n$, which satisfies \eqref{streq_nil_cplx}, by the very definition \ref{another_nil_cplx}.
\end{proof}

As in \cite[the proof of Theorem 12]{CFGU}, the nilpotency of the complex structure $J$ implies that the piece $\mathfrak{a}_{\nu(J)}$ in the ascending series $\{\mathfrak{a}_k\}_{k\in\mathbb{N}}$ reaches the whole Lie algebra $\mathfrak{g}$. It yields that the following sequence of quotient Lie algebras and homomorphisms
\begin{equation}\label{seq_qu}
\mathfrak{g} \> \mathfrak{g}/\mathfrak{a}_1 \> \cdots \> \mathfrak{g}/\mathfrak{a}_{k-1} \stackrel{\pi_k}{\>} \mathfrak{g}/\mathfrak{a}_{k}
\> \cdots \> \mathfrak{g}/\mathfrak{a}_{\nu(J)-1} \> 0,
\end{equation}
where $\pi_k$ is surjective and $J$ descends to the quotient Lie algebra $\mathfrak{g}/\mathfrak{a}_{k}$ for each $k$. By considering the dual of the sequence \eqref{seq_qu}, we have another sequence
\begin{equation}\label{seq_in}
0 \> (\mathfrak{g}/\mathfrak{a}_{\nu(J)-1})^* \> \cdots \> (\mathfrak{g}/\mathfrak{a}_{\nu(J)-k+1})^* \stackrel{{\rho}_k}{\>} (\mathfrak{g}/\mathfrak{a}_{\nu(J)-k})^* \> \cdots \> (\mathfrak{g}/\mathfrak{a}_{1})^* \> \mathfrak{g}^*,
\end{equation}
where $\rho_k$ is injective, and $\rho_{k}$ is the dual mapping of $\pi_{\nu(J)-k+1}$ for each $k$.
The purpose of Definition \ref{another_nil_cplx} above is to show that

\begin{theorem}\label{nu=mu}
Let the pair $(\mathfrak{g},J)$ be a NLA $\mathfrak{g}$ endowed with a nilpotent complex structure $J$.
Then the inclusions
\begin{equation}\label{inclusion}
(\mathfrak{g}/\mathfrak{a}_{\nu(J)-k})^*_{\mathbb{C}} \subseteq W^k \oplus \overline{W}^{k},\quad 0 \leq k \leq \nu(J),
\end{equation}
and the exclusions
\begin{equation}\label{exclusion}
(\mathfrak{g}/\mathfrak{a}_{\nu(J)-k-1})^*_{\mathbb{C}}
\nsubseteq W^k \oplus \overline{W}^{k},\quad 0 \leq k \leq \nu(J)-1,
\end{equation}
hold, where $(\mathfrak{g}/\mathfrak{a}_{\nu(J)-k})^*_{\mathbb{C}}=(\mathfrak{g}/\mathfrak{a}_{\nu(J)-k})^* \otimes_{\mathbb{R}}\mathbb{C}$ and $(\mathfrak{g}/\mathfrak{a}_{\nu(J)-k-1})^*_{\mathbb{C}}$ is similarly defined.
Hence, it follows that $\nu(J)=\mu(J)$.
\end{theorem}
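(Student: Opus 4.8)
The plan is to deduce both families \eqref{inclusion} and \eqref{exclusion} and then read off $\nu(J)=\mu(J)$ from their extreme cases. Put $\nu=\nu(J)$ and write $\mathrm{Ann}(\mathfrak{a}_j)\subseteq\mathfrak{g}^*$ for the annihilator, so that $(\mathfrak{g}/\mathfrak{a}_j)^*_{\mathbb{C}}=\mathrm{Ann}(\mathfrak{a}_j)_{\mathbb{C}}$. Since each $\mathfrak{a}_j$ is a $J$-invariant ideal, this space is stable under $J$ and under conjugation, hence splits as the sum of its $(1,0)$- and $(0,1)$-parts, and it suffices to test every containment on $(1,0)$-forms. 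Granting \eqref{inclusion}, the case $k=\nu$ reads $\mathfrak{g}^*_{\mathbb{C}}\subseteq W^{\nu}\oplus\overline{W}^{\nu}$, i.e. $W^{\nu}=\mathfrak{g}^{1,0}$, so $\mu(J)\le\nu$; granting \eqref{exclusion}, the case $k=\nu-1$ reads $\mathfrak{g}^*_{\mathbb{C}}\nsubseteq W^{\nu-1}\oplus\overline{W}^{\nu-1}$, i.e. $W^{\nu-1}\subsetneq\mathfrak{g}^{1,0}$, so $\mu(J)\ge\nu$. Thus the whole statement reduces to the two displayed (non)containments.

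For \eqref{inclusion} I would induct on $k$, the case $k=0$ being $\mathrm{Ann}(\mathfrak{a}_{\nu})_{\mathbb{C}}=0=W^{0}\oplus\overline{W}^{0}$. The engine is the dual recursion
\[ \alpha\in\mathrm{Ann}(\mathfrak{a}_m)_{\mathbb{C}}\ \Longrightarrow\ d\alpha\in\bigwedge^{2}\mathrm{Ann}(\mathfrak{a}_{m+1})_{\mathbb{C}}, \]
which follows from $d\alpha(X,Y)=-\alpha([X,Y])$ together with $[\mathfrak{a}_{m+1},\mathfrak{g}]\subseteq\mathfrak{a}_m$: for $X\in\mathfrak{a}_{m+1}$ one has $X\lc d\alpha=-\alpha([X,\cdot])=0$, so $d\alpha$ annihilates $\mathfrak{a}_{m+1}$ in either slot. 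Applying this with $m=\nu-k-1$ to a $(1,0)$-form $\alpha\in\mathrm{Ann}(\mathfrak{a}_{\nu-k-1})$ and invoking the inductive hypothesis $\mathrm{Ann}(\mathfrak{a}_{\nu-k})_{\mathbb{C}}\subseteq W^{k}\oplus\overline{W}^{k}$ places $d\alpha$ in $\bigwedge^{2}(W^{k}\oplus\overline{W}^{k})$; the integrability condition \eqref{integrability} removes the $(0,2)$-component, leaving $d\alpha\in(W^{k}\wedge W^{k})\oplus(W^{k}\wedge\overline{W}^{k})$, which is exactly the defining condition $\alpha\in W^{k+1}$. This closes the induction, proving \eqref{inclusion} and hence $\mu(J)\le\nu$.

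The reverse inequality is where the real work lies, and I expect it to be the main obstacle. I would introduce the vector-side series dual to $\{W^{k}\}$, namely $\mathfrak{c}_k:=\mathrm{Ann}_{\mathfrak{g}}\big((W^{k}\oplus\overline{W}^{k})\cap\mathfrak{g}^*\big)$, so that $\mathfrak{c}_0=\mathfrak{g}$ and $\mathfrak{c}_{\mu(J)}=0$. The technical heart is to show that $\{\mathfrak{c}_k\}$ is a descending series of $J$-invariant ideals obeying the recursion $\mathfrak{c}_k=[\mathfrak{c}_{k-1},\mathfrak{g}]+J[\mathfrak{c}_{k-1},\mathfrak{g}]$. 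This rests on the dual description $W^{k}=\mathfrak{g}^{1,0}\cap\mathrm{Ann}([\mathfrak{c}_{k-1},\mathfrak{g}])_{\mathbb{C}}$ (again via $X\lc d\omega=-\omega([X,\cdot])$ and the fact that, for $(1,0)$-forms, membership in $\bigwedge^{2}(W^{k-1}\oplus\overline{W}^{k-1})$ is detected by interior products with $\mathfrak{c}_{k-1}$), followed by passing to the largest $J$-invariant subspace and dualizing. This annihilator-and-$J$-hull bookkeeping, together with the bidegree step of the previous paragraph, is the delicate part; once it is in place a short downward induction on $k$ gives $\mathfrak{c}_k\subseteq\mathfrak{a}_{\mu(J)-k}$, the base $k=\mu(J)$ being $\mathfrak{c}_{\mu(J)}=0=\mathfrak{a}_0$ and the step using the recursion, the inclusion $[\mathfrak{a}_{j},\mathfrak{g}]\subseteq\mathfrak{a}_{j-1}$, and the $J$-invariance of $\mathfrak{c}_k$. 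At $k=0$ this yields $\mathfrak{g}=\mathfrak{c}_0\subseteq\mathfrak{a}_{\mu(J)}$, whence $\mathfrak{a}_{\mu(J)}=\mathfrak{g}$ and $\nu\le\mu(J)$. Together with the second paragraph this gives $\nu(J)=\mu(J)$.

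Finally, \eqref{exclusion} follows from $\mathfrak{c}_{\nu-1}\neq0$, which is exactly the equality $\mu(J)=\nu$ just obtained. Indeed, dualizing, a failure of \eqref{exclusion} at level $k$ means $\mathfrak{c}_k\subseteq\mathfrak{a}_{\nu-k-1}$; the recursion together with $[\mathfrak{a}_{j},\mathfrak{g}]\subseteq\mathfrak{a}_{j-1}$ and the $J$-invariance of the ideals propagates this to $\mathfrak{c}_{k'}\subseteq\mathfrak{a}_{\nu-k'-1}$ for every $k'\ge k$, forcing $\mathfrak{c}_{\nu-1}\subseteq\mathfrak{a}_0=0$, a contradiction. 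Hence \eqref{exclusion} holds for all $0\le k\le\nu-1$, and the theorem follows. Everything outside the construction and recursion for $\{\mathfrak{c}_k\}$ is routine bookkeeping with annihilators and the bidegree decomposition.
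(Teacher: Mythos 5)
Your argument is correct, but it takes a genuinely different route from the paper's, most visibly in the treatment of \eqref{exclusion} and of the inequality $\nu(J)\le\mu(J)$. The paper proves the theorem by an explicit basis construction: it chooses adapted bases of the successive quotients $(\mathfrak{g}/\mathfrak{a}_{\nu-j})_{\mathbb{C}}$, dualizes the bracket relations $[\mathfrak{a}_{j},\mathfrak{g}]\subseteq\mathfrak{a}_{j-1}$ to obtain the inclusions, and gets the exclusions by tracking, through each basis extension, that elements of $\mathfrak{a}_{\nu-j}\setminus\mathfrak{a}_{\nu-j-1}$ have some nonvanishing bracket, hence some $d\omega^k$ has a nonvanishing coefficient placing $\omega^k$ in $W^{j+1}\setminus W^{j}$. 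Your inclusion argument is the coordinate-free form of the same mechanism (the dual recursion $\alpha\in\mathrm{Ann}(\mathfrak{a}_m)\Rightarrow d\alpha\in\bigwedge^2\mathrm{Ann}(\mathfrak{a}_{m+1})$ plus the bidegree reduction of Lemma \ref{wcrt}), and is cleaner. For the other half, your series $\mathfrak{c}_k$ is exactly the paper's $\mathfrak{h}^k$ of Definition \ref{annihilator}: the recursion $\mathfrak{c}_k=[\mathfrak{c}_{k-1},\mathfrak{g}]+J[\mathfrak{c}_{k-1},\mathfrak{g}]$ that you flag as the technical heart is precisely Proposition \ref{ann_equ}, which the paper proves \emph{after} this theorem (independently of it, so there is no circularity), and your downward induction $\mathfrak{c}_k\subseteq\mathfrak{a}_{\mu(J)-k}$ is the paper's Remark \ref{a=h}. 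In effect you reorder the paper's development: establish the annihilator identification first and then read off both $\nu(J)=\mu(J)$ and the exclusions as one-line consequences of $\mathfrak{c}_{\nu-1}\neq0$. What each approach buys: the paper's basis argument is elementary and self-contained but notationally heavy and ends with ``the other cases can be proved similarly,'' whereas yours is structurally transparent at the cost of front-loading the proof of the recursion --- a cost the paper pays anyway. One small presentational point: your opening paragraph claims the equality $\nu(J)=\mu(J)$ ``reduces to'' the two (non)containments, but your actual derivation of $\nu(J)\le\mu(J)$ comes from the $\mathfrak{c}_k$-induction rather than from \eqref{exclusion}; the logic is sound, but the framing should be adjusted so that \eqref{exclusion} is presented as a consequence of $\nu(J)=\mu(J)$, not an input to it.
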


The inclusions \eqref{inclusion} already appeared implicitly in \cite[the proof of Theorem 12]{CFGU}.

\begin{proof}
It is clear that $\nu(J)=\mu(J)$ if \eqref{inclusion} and \eqref{exclusion} are established, since
\[\mathfrak{g}^*_{\mathbb{C}} \subseteq W^{\nu(J)} \oplus \overline{W}^{\nu(J)}\quad \text{and}\quad
\mathfrak{g}^*_{\mathbb{C}} \nsubseteq W^{\nu(J)-1} \oplus \overline{W}^{\nu(J)-1}.\]
To prove \eqref{inclusion} and \eqref{exclusion}, we follow the notations and ideas in \cite[the proof in the theroem 12]{CFGU}.
For simplicity, we will write $\nu(J)$ as $\nu$ in the following. Denote by $n_i$ for $1\leq i \leq \nu\!-\!1$ and $n$ the complex dimensions $\dim_{\mathbb{C}}\mathfrak{a}_i$
and $\dim_{\mathbb{C}}\mathfrak{g}$ respectively.
Consider a basis $\{ X_i,\overline{X}_i \}_{i=1}^{n-n_{\nu\!-\!1}}$ of $(\mathfrak{g}/\mathfrak{a}_{\nu\!-\!1}
)_{\mathbb{C}}=\mathfrak{g}/\mathfrak{a}_{\nu\!-\!1} \otimes_{\mathbb{R}}\mathbb{C}$, and denote its dual basis in
$(\mathfrak{g}/\mathfrak{a}_{\nu\!-\!1})^*_{\mathbb{C}}$  by
$\{ \omega^i,\overline{\omega}^i \}_{i=1}^{n-n_{\nu\!-\!1}}$, where $X_i \in \mathfrak{g}_{1,0}$ and $\omega_i \in \mathfrak{g}^{1,0}$.
The Lie algebra $\mathfrak{g}/\mathfrak{a}_{\nu\!-\!1}$ is abelian, as $[\mathfrak{g},\mathfrak{g}] \subseteq \mathfrak{a}_{\nu\!-\!1}$
by the very definition of $\mathfrak{a}_{\nu\!-\!1}$. It follows that
\[ d \omega^i = 0,\quad 1\leq i \leq n-n_{\nu\!-\!1}, \]
which implies the $k=1$ case of \eqref{inclusion}.

For the  $k=1$ case of \eqref{exclusion}, let us extend the basis $\{ X_i,\overline{X}_i \}_{i=1}^{n-n_{\nu\!-\!1}}$ of $(\mathfrak{g}/\mathfrak{a}_{\nu\!-\!1})_{\mathbb{C}}$ to the one $\{ X_i,\overline{X}_i \}_{i=1}^{n-n_{\nu\!-\!2}}$ of $(\mathfrak{g}/\mathfrak{a}_{\nu\!-\!2})_{\mathbb{C}}$ such that $\{ X_i,\overline{X}_i \}_{i=n-n_{\nu\!-\!1}+1}^{n-n_{\nu\!-\!2}}$ is a basis of $(\mathfrak{a}_{\nu\!-\!1}/\mathfrak{a}_{\nu\!-\!2})_{\mathbb{C}}$, with the dual basis in $(\mathfrak{g}/\mathfrak{a}_{\nu\!-\!2})^*_{\mathbb{C}}$ denoted by $\{ \omega^i,\overline{\omega}^i \}_{i=1}^{n-n_{\nu\!-\!2}}$,
since
\[\mathfrak{g}/\mathfrak{a}_{\nu\!-\!1} \cong \mathfrak{g}/\mathfrak{a}_{\nu\!-\!2} \Big/ \mathfrak{a}_{\nu\!-\!1}/\mathfrak{a}_{\nu\!-\!2}.\]
It yields that $[\mathfrak{a}_{\nu\!-\!1}/\mathfrak{a}_{\nu\!-\!2},\,\mathfrak{g}/\mathfrak{a}_{\nu\!-\!2}]=0$ as $[\mathfrak{a}_{\nu\!-\!1},\mathfrak{g}] \subseteq \mathfrak{a}_{\nu\!-\!2}$. For each $1 \leq i \leq n-n_{\nu\!-\!1}$, we have $[X_i, \mathfrak{g}/\mathfrak{a}_{\nu\!-\!2}]\neq 0$ in $(\mathfrak{g}/\mathfrak{a}_{\nu\!-\!2})_{\mathbb{C}}$, as otherwise
some $X_i$ would belong to $(\mathfrak{a}_{\nu\!-\!1}/ \mathfrak{a}_{\nu\!-\!2})_{\mathbb{C}}$, which is not the case.
So we have
\begin{enumerate}
\item for $n-n_{\nu\!-\!1}+1 \leq k \leq n-n_{\nu\!-\!2}$, $1 \leq \ell \leq n-n_{\nu\!-\!2}$, the followings hold
\[[X_k,X_{\ell}]=0, \quad [X_k, \overline{X}_{\ell}]=0,\]
where both are considered to be equalities in $(\mathfrak{g}/\mathfrak{a}_{\nu\!-\!2})_{\mathbb{C}}$,
\item for any $1 \leq i \leq n-n_{\nu\!-\!1}$, there exists some $1 \leq j \leq n-n_{\nu\!-\!1}$ such that
\[ \text{either}\quad [X_i,X_j] \neq 0\quad \text{or}\quad [X_i,\overline{X}_j] \neq 0\]
as equalities in $(\mathfrak{a}_{\nu\!-\!1}/\mathfrak{a}_{\nu\!-\!2})_{\mathbb{C}}$.
\end{enumerate}
Reflecting these facts to the dual basis, we obtain that, for $n-n_{\nu\!-\!1}+1 \leq k \leq n-n_{\nu\!-\!2}$,
\[d \omega^k = \sum_{1\leq i < j \leq n-n_{\nu(J)-1}}A_{ij}^k\omega^i\wedge\omega^j+\sum_{1\leq i,j \leq n-n_{\nu(J)-1}}B_{ij}^k\omega^i\wedge\overline{\omega}^j,\]
and there exists at least one $k$ such that the coefficients $A_{ij}^k$ and $B_{ij}^k$ of the $d\omega^k$ don't all vanish, which implies that
$\omega^k \in W^2$ but $\omega^k \notin W^1$, namely $\omega^k \in W^2 \setminus W^1$. Therefore the case $k=2$ of \eqref{inclusion} and the case $k=1$ of \eqref{exclusion} are established,
and it can be assumed additionally that there exists some $q$, satisfying $n-n_{\nu\!-\!1} \leq q < n-n_{\nu\!-\!2}$, such that $\{\omega^i,\overline{\omega}^i\}_{i=1}^{q}$ is a basis of $(W^1 \oplus \overline{W}^1) \cap (\mathfrak{g}/\mathfrak{a}_{\nu\!-\!2})^*_\mathbb{C}$.

Now we proceed to prove the $k=2$ case of \eqref{exclusion}. Extend the basis $\{ X_i,\overline{X}_i \}_{i=1}^{n-n_{\nu\!-\!2}}$ of $(\mathfrak{g}/\mathfrak{a}_{\nu\!-\!2})_{\mathbb{C}}$ to the one $\{ X_i,\overline{X}_i \}_{i=1}^{n-n_{\nu\!-\!3}}$ of $(\mathfrak{g}/\mathfrak{a}_{\nu\!-\!3})_{\mathbb{C}}$ such that $\{ X_i,\overline{X}_i \}_{i=n-n_{\nu\!-\!2}+1}^{n-n_{\nu\!-\!3}}$ is a basis of $(\mathfrak{a}_{\nu\!-\!2}/\mathfrak{a}_{\nu\!-\!3})_{\mathbb{C}}$, with the dual basis in $(\mathfrak{g}/\mathfrak{a}_{\nu\!-\!3})^*_\mathbb{C}$ denoted by $\{ \omega^i,\overline{\omega}^i \}_{i=1}^{n-n_{\nu\!-\!3}}$,
since
\[\mathfrak{g}/\mathfrak{a}_{\nu\!-\!2} \cong \mathfrak{g}/\mathfrak{a}_{\nu\!-\!3} \Big/ \mathfrak{a}_{\nu\!-\!2}/\mathfrak{a}_{\nu\!-\!3}.\]
Once again we have $[\mathfrak{a}_{\nu\!-\!2}/\mathfrak{a}_{\nu\!-\!3},\, \mathfrak{g}/\mathfrak{a}_{\nu\!-\!3}]=0$ since $[\mathfrak{a}_{\nu\!-\!2},\mathfrak{g}] \subseteq \mathfrak{a}_{\nu\!-\!3}$. For each $n-n_{\nu\!-\!1}+1 \leq i \leq n-n_{\nu\!-\!2}$, we must have $[X_i, \mathfrak{g}/\mathfrak{a}_{\nu\!-\!3}]\neq 0$ in $(\mathfrak{g}/\mathfrak{a}_{\nu\!-\!3})_{\mathbb{C}}$, as otherwise $X_i$ would belong to $(\mathfrak{a}_{\nu\!-\!2}/ \mathfrak{a}_{\nu\!-\!3})_{\mathbb{C}}$, which is absurd.
Therefore we have
\begin{enumerate}
\item for $n-n_{\nu\!-\!2}+1 \leq k \leq n-n_{\nu\!-\!3}$, $1 \leq \ell \leq n-n_{\nu\!-\!3}$, the followings hold
\[[X_k,X_{\ell}]=0, \quad [X_k, \overline{X}_{\ell}]=0,\]
where both are considered as equalities in $(\mathfrak{g}/\mathfrak{a}_{\nu\!-\!3})_{\mathbb{C}}$,
\item for any $n-n_{\nu\!-\!1}+1 \leq i \leq n-n_{\nu\!-\!2}$, there exists some $1 \leq j \leq n-n_{\nu\!-\!2}$ such that
\[ \text{either}\quad [X_i,X_j] \neq 0\quad \text{or}\quad [X_i,\overline{X}_j] \neq 0,\]
considered as equalities $(\mathfrak{a}_{\nu\!-\!2}/\mathfrak{a}_{\nu\!-\!3})_{\mathbb{C}}$,
\item with a little more details involved here, some $\ell$ will be picked up such that $q+1 \leq \ell \leq n-n_{\nu\!-\!2}$, where $q$ is mentioned above. It follows that there exists some $1 \leq j \leq n-n_{\nu\!-\!2}$ such that
    \[\text{either}\quad [X_{\ell},X_j] \neq 0 \quad \text{or} \quad [X_{\ell},\overline{X}_j] \neq 0,\]
    considered as equalities $(\mathfrak{a}_{\nu\!-\!2}/\mathfrak{a}_{\nu\!-\!3})_{\mathbb{C}}$.
\end{enumerate}

Reflecting these facts to the dual basis, we obtain that, for $n-n_{\nu\!-\!2}+1 \leq k \leq n-n_{\nu\!-\!3}$,
\[d \omega^k = \sum_{1\leq i < j \leq n-n_{\nu\!-\!2}}A_{ij}^k\omega^i\wedge\omega^j+\sum_{1\leq i,j \leq n-n_{\nu\!-\!2}}B_{ij}^k\omega^i\wedge\overline{\omega}^j,\]
which implies the $k=3$ case of \eqref{inclusion}. The little more details indicate that there exists at least one $k$ for $n-n_{\nu\!-\!2}+1 \leq k \leq n-n_{\nu\!-\!3}$ such that the coefficients $\{A_{lj}^k\}_{j>l}$, $\{A_{il}^k\}_{i<l}$, $\{B_{lj}^k\}_{j=1}^{n-n_{\nu\!-\!2}}$ and $\{B_{il}^k\}_{ 1 \leq i \leq n-n_{\nu\!-\!2}, i \neq l}$ of the $d\omega^k$ don't all vanish, which implies that $\omega^k \in W^3 \setminus W^2$. This establishes the $k=2$ case of \eqref{exclusion}. The  other cases can be proved similarly.
\end{proof}

Note that some of the inclusions in \eqref{inclusion} could be strict, as illustrated by the following example.
\begin{example}\cite[Example 4]{CFGU}
Let the pair $(\mathfrak{g},J)$ be a NLA $\mathfrak{g}$ endowed with a nilpotent complex structure $J$,
determined by the structure equation
\[\begin{cases} d\omega^1 =0,\\
d\omega^2 = \omega^1 \wedge \overline{\omega}^1,\\
d\omega^3 = -\omega^1 \wedge \overline{\omega}^1,\\
d\omega^4 = \omega^1 \wedge (\omega^2+\overline{\omega}^2),\\
d\omega^5 = \frac{1}{2} \omega^1 \wedge (-\omega^2-\omega^3+2\omega^4+\overline{\omega}^2+\overline{\omega}^3),
\end{cases}\]
where $\{\omega^i\}_{i=1}^5$ is a basis of $\mathfrak{g}^{1,0}$.
The dual type of the structure equation immediately follows, with $\{X_i\}_{i=1}^5$ being the dual basis of $\{\omega^i\}_{i=1}^5$,
\[ \begin{cases}
[X_1, \overline{X}_1] = -X_2 + \overline{X}_2 + X_3 - \overline{X}_3,\\
[X_1,X_2]=-X_4 + \frac{1}{2} X_5, \quad [X_1,\overline{X}_2]=-X_4-\frac{1}{2}X_5,\\
[X_1,X_3]=\frac{1}{2}X_5, \quad [X_1,\overline{X}_3]=-\frac{1}{2}X_5,\\
[X_1,X_4]=-X_5.
\end{cases} \]
Then it is easy to show that
\[\begin{cases}
\mathfrak{a}_1^{\mathbb{C}} = \langle X_5, \overline{X}_5\rangle,\\
\mathfrak{a}_2^{\mathbb{C}} = \langle X_3,\overline{X}_3, X_4,\overline{X}_4, X_5, \overline{X}_5\rangle,\\
\mathfrak{a}_3^{\mathbb{C}} = \langle  X_2,\overline{X}_2, X_3,\overline{X}_3, X_4,\overline{X}_4, X_5, \overline{X}_5\rangle,\\
\mathfrak{a}_4^{\mathbb{C}} = \mathfrak{g}^*_{\mathbb{C}},\\
\end{cases}\]
where $\mathfrak{a}_i^{\mathbb{C}} = \mathfrak{a}_i \otimes_{\mathbb{R}} \mathbb{C}$ for any $i$.
It follows that
\[\begin{array}{rcl}
(\mathfrak{g}/\mathfrak{a}_4)^*_{\mathbb{C}} = 0 & = & W^0 \oplus \overline{W}^0 = 0,\\
(\mathfrak{g}/\mathfrak{a}_3)^*_{\mathbb{C}} = \langle \omega^i, \overline{\omega}^i\rangle_{i=1} & \subsetneq & W^1 \oplus \overline{W}^1 = \langle \omega^1, \omega^2+\omega^3, \overline{\omega}^1,\overline{\omega}^2+\overline{\omega}^3\rangle,\\
(\mathfrak{g}/\mathfrak{a}_2)^*_{\mathbb{C}} = \langle \omega^i, \overline{\omega}^i\rangle_{i=1}^2 & \subsetneq & W^2 \oplus \overline{W}^2 = \langle \omega^i, \overline{\omega}^i\rangle_{i=1}^3,\\
(\mathfrak{g}/\mathfrak{a}_1)^*_{\mathbb{C}} = \langle \omega^i, \overline{\omega}^i\rangle_{i=1}^4 & = & W^3 \oplus \overline{W}^3 = \langle \omega^i, \overline{\omega}^i\rangle_{i=1}^4,\\
\mathfrak{g}^*_{\mathbb{C}} = \langle \omega^i, \overline{\omega}^i\rangle_{i=1}^5 & = & W^4 \oplus \overline{W}^4 = \langle \omega^i, \overline{\omega}^i\rangle_{i=1}^5,\\
\end{array}\]
where $(\mathfrak{g}/\mathfrak{a}_i)^*_{\mathbb{C}}=(\mathfrak{g}/\mathfrak{a}_i)^* \otimes_{\mathbb{R}} \mathbb{C}$ for any $i$,
which implies that
\[\begin{split}
\omega^1 \in (\mathfrak{g}/\mathfrak{a}_3)^*_{\mathbb{C}},\quad\omega^1\notin W^0 \oplus \overline{W}^0,\\
\omega^2\in (\mathfrak{g}/\mathfrak{a}_2)^*_{\mathbb{C}},\quad\omega^2\notin W^1 \oplus \overline{W}^1,\\
\omega^4\in (\mathfrak{g}/\mathfrak{a}_1)^*_{\mathbb{C}},\quad\omega^4\notin W^2 \oplus \overline{W}^2,\\
\omega^5\in \mathfrak{g}^*_{\mathbb{C}},\quad\omega^5\notin W^3 \oplus \overline{W}^3.\\
\end{split}\]
\end{example}

It is natural to consider the annihilators of $\{W^i \oplus \overline{W}^i\}_{i\in\mathbb{N}}$ accordingly, which motivates the following definition. It is closely related to \cite[the first paragraph of Section 1.2.2]{Rol09Geo} and \cite[Lemma 3]{CF}.
\begin{definition}\label{annihilator}
Let the pair $(\mathfrak{g},J)$ be a NLA $\mathfrak{g}$ endowed with a complex structure $J$.
Define a \emph{descending series of subspaces} $\{\mathfrak{h}^k\}_{k\in\mathbb{N}}$ \emph{compatible with} $J$ of $\mathfrak{g}$,
\[\mathfrak{h}^0=\mathfrak{g},\quad \mathfrak{h}^k=[\mathfrak{h}^{k-1},\mathfrak{g}]+J[\mathfrak{h}^{k-1},\mathfrak{g}]\quad k\geq 1.\]
It is easy to verify that $\mathfrak{h}^k$ is a $J$-invariant ideal and $\mathfrak{g}^k + J\mathfrak{g}^k \subseteq \mathfrak{h}^k $ for $k \in \mathbb{N}$, since $[\mathfrak{h}^k,\mathfrak{g}] \subseteq [\mathfrak{h}^{k-1},\mathfrak{g}] \subseteq \mathfrak{h}^{k}$.
\end{definition}

\begin{lemma}\label{wcrt}
The following three statements hold.
\begin{enumerate}
\item Let $V$ be a real subspace of $\mathfrak{g}^*$. Then, for $\beta \in \wedge^2\mathfrak{g}^*$
\[\beta \in V \wedge V \Longleftrightarrow \forall \,\theta \in \mathfrak{g},\ \iota_{\theta}\beta \in V.\]
\item Let $U$ be a complex subspace of $\mathfrak{g}^*_{\mathbb{C}}$. It yields that, for $\beta \in \wedge^2\mathfrak{g}^*_{\mathbb{C}}$,
\[\beta \in U \wedge U \Longleftrightarrow \forall \,\theta \in \mathfrak{g}_{\mathbb{C}},\ \iota_{\theta}\beta \in U.\]
\item Let $W$ be a complex subspace of $\mathfrak{g}^{1,0}$. It follows that, for $\alpha \in \mathfrak{g}^{1,0}$,
\[d\alpha \in (W \wedge W) \oplus  (W \wedge \overline{W}) \Longleftrightarrow \forall \,\theta \in \mathfrak{g}_{\mathbb{C}},\ \iota_{\theta}d\alpha \in W \oplus \overline{W}.\]
\end{enumerate}
Here $\iota_{\theta}$ denotes the contraction operator with respect to $\theta$.
\end{lemma}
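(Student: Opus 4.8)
The plan is to treat the three statements as a single piece of linear algebra, proving (1) and (2) by the same argument (over $\mathbb{R}$ and over $\mathbb{C}$ respectively) and then deducing (3) from (2) with the help of integrability. In each of (1) and (2) the forward implication is immediate: writing $\beta=\sum_\alpha \xi_\alpha\wedge\eta_\alpha$ with $\xi_\alpha,\eta_\alpha\in V$ (resp. $U$) and using that $\iota_\theta(\xi_\alpha\wedge\eta_\alpha)=(\iota_\theta\xi_\alpha)\eta_\alpha-(\iota_\theta\eta_\alpha)\xi_\alpha$ is a scalar combination of the factors $\xi_\alpha,\eta_\alpha$, one sees at once that $\iota_\theta\beta$ lands in $V$ (resp. $U$) for every $\theta$.

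The content lies in the reverse implication, for which I would fix an adapted basis. Choose a basis $\{e^1,\dots,e^m\}$ of $\mathfrak g^*$ whose first $r=\dim V$ vectors span $V$, and let $\{\theta_1,\dots,\theta_m\}$ be the dual basis of $\mathfrak g$. Expanding $\beta=\sum_{i<j}\beta_{ij}\,e^i\wedge e^j$ and computing $\iota_{\theta_k}\beta$ via $\iota_{\theta_k}(e^i\wedge e^j)=\delta^i_k e^j-\delta^j_k e^i$, the coefficient of $e^\ell$ in $\iota_{\theta_k}\beta$ equals, up to sign, $\beta_{k\ell}$. Demanding $\iota_{\theta_k}\beta\in V$ for every $k$ then kills every coefficient $\beta_{ij}$ with $\max(i,j)>r$, so $\beta=\sum_{i<j\le r}\beta_{ij}\,e^i\wedge e^j\in V\wedge V$. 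Equivalently, one may argue coordinate-freely that the hypothesis ``$\iota_\theta\beta\in V$ for all $\theta$'' is the same as ``$\iota_\eta\beta=0$ for all $\eta$ in the annihilator $V^\perp$'', which is exactly the condition cutting out $\wedge^2 V$ inside $\wedge^2\mathfrak g^*$. Statement (2) is this very argument carried out over $\mathbb{C}$ with a basis adapted to the complex subspace $U$.

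Finally, (3) follows by applying (2) to $U=W\oplus\overline W$. Here one uses the decomposition $(W\oplus\overline W)\wedge(W\oplus\overline W)=(W\wedge W)\oplus(W\wedge\overline W)\oplus(\overline W\wedge\overline W)$ together with the fact that $W\wedge W$, $W\wedge\overline W$ and $\overline W\wedge\overline W$ sit in $\bigwedge^{2,0}\mathfrak g^*$, $\bigwedge^{1,1}\mathfrak g^*$ and $\bigwedge^{0,2}\mathfrak g^*$ respectively. Since $\alpha\in\mathfrak g^{1,0}$ and $J$ is a complex structure, the integrability condition \eqref{integrability} forces $d\alpha$ to have vanishing $(0,2)$-component, so its $\overline W\wedge\overline W$-part is automatically zero; hence $d\alpha\in(W\oplus\overline W)\wedge(W\oplus\overline W)$ is equivalent to $d\alpha\in(W\wedge W)\oplus(W\wedge\overline W)$, and (2) converts this into the contraction condition $\iota_\theta d\alpha\in W\oplus\overline W$ for all $\theta\in\mathfrak g_{\mathbb C}$. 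The only genuinely delicate point is the reverse implication of (1)/(2) — organizing the adapted basis so that the vanishing of the off-$V$ coefficients can be read off cleanly — while the passage to (3) is then just bookkeeping with bidegrees together with a single appeal to integrability.
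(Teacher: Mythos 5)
Your proposal is correct and follows essentially the same route as the paper's proof: an adapted basis of $\mathfrak{g}^*$ (resp. $\mathfrak{g}^*_{\mathbb{C}}$) extending a basis of $V$ (resp. $U$), direct computation of the contractions against the dual basis to show the forward implication and to kill all coefficients $\beta_{ij}$ with an index outside the subspace for the reverse one, and then deduction of statement (3) from statement (2) applied to $W\oplus\overline{W}$ by using the integrability condition \eqref{integrability} to rule out the $\overline{W}\wedge\overline{W}$ component. The only difference is cosmetic: the paper phrases the reverse implication as a proof by contradiction on a nonzero offending coefficient, whereas you read off the vanishing of all such coefficients directly.
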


\begin{proof}
Let $\{\phi^i\}_{i=1}^n$ be a basis of $\mathfrak{g}^*$ with $\{\phi^i\}_{i=1}^m$ being the basis of $V$ for some $m \leq n$, where the dual basis is denoted by $\{\theta_i\}_{i=1}^n$.
If $\beta \in V \wedge V$, it yields that \[\beta = \sum_{i<j \leq m}c_{ij} \phi^i \wedge \phi^j\] for $c_{ij} \in \mathbb{R}$.
It is easy to see that, for $\theta \in \mathfrak{g}$,
\[\iota_{\theta}\beta = \sum_{i<j\leq m} c_{ij} (\iota_{\theta}\phi^i) \wedge \phi^j - c_{ij} \phi^i \wedge (\iota_{\theta}\phi^j) \ \in \text{V},\]
where it should be noted that $\iota_{\theta}\phi^i$ and $\iota_{\theta}\phi^j$ are constants. Conversely, suppose that $\forall \,\theta \in \mathfrak{g},\ \iota_{\theta}\beta\in V$. If \[\beta = \sum_{i<j \leq n}c_{ij} \phi^i \wedge \phi^j,\]
with the coefficients $\{c_{ij}\}_{i>m}$ and $\{c_{ij}\}_{\begin{subarray}{c} i \leq m\\ j>m \end{subarray}}$ are not all zeros. Then, when $c_{ij} \neq 0$ for some $i>m$, it yields that
\[\iota_{\theta_i} \beta = \sum_{j>i}c_{ij} \phi^j -\sum_{j<i}c_{ji}\phi^j, \]
where some term in $\sum_{j>i}c_{ij} \phi^j$ is not zero and thus $\iota_{\theta_i} \beta \notin V$. When $c_{ij} \neq 0$ for some $i \leq m$ and some $j>m$, it similarly follows that $\iota_{\theta_i} \beta \notin V$, since the term $c_{ij}\phi^j$ in $\sum_{j>i}c_{ij} \phi^j$ is not zero. Therefore, the coefficients $\{c_{ij}\}_{i>m}$ and $\{c_{ij}\}_{\begin{subarray}{c} i \leq m\\ j>m \end{subarray}}$ have to be all zeros, which implies $\beta \in V \wedge V$.

The second statement follows by the same method in the real case. As to the third, it should be noted that, for $\alpha \in \mathfrak{g}^{1,0}$,
\[d\alpha \in \left( W \wedge W \right) \oplus \left( W \wedge \overline{W} \right)
\ \Longleftrightarrow \ d\alpha \in \left( W \oplus \overline{W}\right) \wedge \left( W \oplus \overline{W}\right),\]
due to the integrability condition \eqref{integrability}. Then the following equivalence can be proved by the second statement
\[d\alpha \in \left( W \oplus \overline{W}\right) \wedge \left( W \oplus \overline{W}\right) \ \Longleftrightarrow \ \forall\, \theta \in \mathfrak{g}_{\mathbb{C}},\ \iota_{\theta}d\alpha \in W \oplus \overline{W}.\]
\end{proof}

\begin{proposition}\label{ann_equ}
For $k \in \mathbb{N}$, it holds that
\[\mathrm{Ann}\left(W^k \oplus \overline{W}^{k}\right) = \mathfrak{h}^k_{\mathbb{C}},\]
where the annihilator $\mathrm{Ann}(W^k \oplus \overline{W}^{k})$ of $W^k \oplus \overline{W}^{k}$ in $\mathfrak{g}_{\mathbb{C}}$ is defined by the set \[\{\theta\in\mathfrak{g}_{\mathbb{C}} \ \big| \ \forall \,\alpha \in W^k,\ \alpha(\theta)= \overline{\alpha}(\theta)=0\},\]
and $\mathfrak{h}^k_{\mathbb{C}}=\mathfrak{h}^k \otimes_{\mathbb{R}} \mathbb{C}$.
\end{proposition}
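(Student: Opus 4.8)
The plan is to induct on $k$, using the perfect duality between $\mathfrak{g}_{\mathbb{C}}$ and $\mathfrak{g}^*_{\mathbb{C}}$ to pass freely between the statement $\mathrm{Ann}(W^k\oplus\overline{W}^k)=\mathfrak{h}^k_{\mathbb{C}}$ and its dual form $W^k\oplus\overline{W}^k=\mathrm{Ann}(\mathfrak{h}^k_{\mathbb{C}})$ (the forms vanishing on $\mathfrak{h}^k_{\mathbb{C}}$), with Lemma \ref{wcrt}(3) and the identity $d\alpha(X,Y)=-\alpha([X,Y])$ as the main tools. The base case $k=0$ is immediate, since $W^0=0$ gives $\mathrm{Ann}(W^0\oplus\overline{W}^0)=\mathfrak{g}_{\mathbb{C}}=\mathfrak{h}^0_{\mathbb{C}}$.

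For the inductive step I would assume $\mathrm{Ann}(W^{k-1}\oplus\overline{W}^{k-1})=\mathfrak{h}^{k-1}_{\mathbb{C}}$, which by the double-annihilator identity also reads $W^{k-1}\oplus\overline{W}^{k-1}=\mathrm{Ann}(\mathfrak{h}^{k-1}_{\mathbb{C}})$, and prove the two inclusions separately. For $\mathfrak{h}^k_{\mathbb{C}}\subseteq\mathrm{Ann}(W^k\oplus\overline{W}^k)$, I would take an arbitrary $\omega\in W^k$ and check that it kills the complexified definition $\mathfrak{h}^k_{\mathbb{C}}=[\mathfrak{h}^{k-1}_{\mathbb{C}},\mathfrak{g}_{\mathbb{C}}]+J[\mathfrak{h}^{k-1}_{\mathbb{C}},\mathfrak{g}_{\mathbb{C}}]$: for $\eta\in\mathfrak{h}^{k-1}_{\mathbb{C}}$ and $\zeta\in\mathfrak{g}_{\mathbb{C}}$ one has $\omega([\eta,\zeta])=(\iota_\zeta d\omega)(\eta)$, and since $\omega\in W^k$ forces $\iota_\zeta d\omega\in W^{k-1}\oplus\overline{W}^{k-1}$ by Lemma \ref{wcrt}(3), while $\eta$ annihilates that space by the inductive hypothesis, this expression vanishes. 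As $\omega$ is of type $(1,0)$, one has $\omega(J\,\cdot)=\sqrt{-1}\,\omega(\cdot)$, so $\omega$ also kills $J[\eta,\zeta]$ and hence all of $\mathfrak{h}^k_{\mathbb{C}}$; conjugation then disposes of $\overline{W}^k$ because $\mathfrak{h}^k$ is real.

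For the reverse inclusion I would instead establish the dual form $\mathrm{Ann}(\mathfrak{h}^k_{\mathbb{C}})\subseteq W^k\oplus\overline{W}^k$. Since $\mathfrak{h}^k$ is $J$-invariant, $\mathrm{Ann}(\mathfrak{h}^k_{\mathbb{C}})$ is $J$-invariant and stable under conjugation, so it decomposes into its $(1,0)$ and $(0,1)$ pieces, and it suffices to show that every $\omega\in\mathfrak{g}^{1,0}$ vanishing on $\mathfrak{h}^k_{\mathbb{C}}$ lies in $W^k$. By Lemma \ref{wcrt}(3) this reduces to checking that $\iota_\theta d\omega\in W^{k-1}\oplus\overline{W}^{k-1}=\mathrm{Ann}(\mathfrak{h}^{k-1}_{\mathbb{C}})$ for all $\theta\in\mathfrak{g}_{\mathbb{C}}$, that is, that $(\iota_\theta d\omega)(\eta)=-\omega([\theta,\eta])$ vanishes for $\eta\in\mathfrak{h}^{k-1}_{\mathbb{C}}$; this is immediate because $[\theta,\eta]\in[\mathfrak{h}^{k-1}_{\mathbb{C}},\mathfrak{g}_{\mathbb{C}}]\subseteq\mathfrak{h}^k_{\mathbb{C}}$ and $\omega$ was assumed to vanish there.

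The substantive content of both inclusions is the single computation $\omega([\eta,\zeta])=\pm(\iota_\bullet d\omega)(\bullet)$ routed through the inductive hypothesis, so I do not expect any genuine analytic difficulty. The step requiring the most care is purely organizational: keeping straight which annihilator lives in $\mathfrak{g}_{\mathbb{C}}$ and which in $\mathfrak{g}^*_{\mathbb{C}}$, correctly invoking the finite-dimensional double-annihilator to swap between them, and justifying the $J$-invariant and conjugation splittings used to reduce to $(1,0)$-forms.
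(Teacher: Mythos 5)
Your proposal is correct and follows essentially the same route as the paper: induction on $k$, with the forward inclusion obtained from $d\alpha(X,Y)=-\alpha([X,Y])$ plus the inductive hypothesis and the $(1,0)$-type of $\omega$ to absorb the $J$-part, and the reverse inclusion obtained by showing $\mathrm{Ann}(\mathfrak{h}^k_{\mathbb{C}})\subseteq W^k\oplus\overline{W}^k$ via Lemma \ref{wcrt}(3) and then dualizing. The only (cosmetic) differences are that you split $\alpha$ into type components using the $J$-invariance of $\mathrm{Ann}(\mathfrak{h}^k_{\mathbb{C}})$ where the paper manipulates $d\alpha$ and $d(J\alpha)$ directly, and your uniform inductive step subsumes the paper's separately treated $k=1$ case.
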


\begin{remark}\label{a=h}
Let the pair $(\mathfrak{g},J)$ be a NLA $\mathfrak{g}$ endowed with a nilpotent complex structure $J$.
It follows from Theorem \ref{nu=mu} and Proposition \ref{ann_equ} that $\mathfrak{h}^k \subseteq \mathfrak{a}_{\nu(J)-k}$ for $0 \leq k \leq \nu(J)$.
\end{remark}

\begin{proof}
The case of $k=0$ is trivial. As to the case of $k=1$, note that \[\mathfrak{h}^1 = \mathfrak{g}^1 + J\mathfrak{g}^1.\]
For $\alpha \in W^1$ and $X \in \mathfrak{g}^1$, where $X$ can be expressed as $\sum_{i=1}^m [Y_i,Z_i]$ for $Y_i,Z_i \in \mathfrak{g}$, it follows that \[\begin{aligned} \alpha(X) &= \sum_{i=1}^m -d\alpha(Y_i,Z_i)= 0,\\
\alpha(JX) &= J\alpha(X)=\sqrt{-1}\alpha(X)=\sum_{i=1}^m -\sqrt{-1}d\alpha(Y_i,Z_i)= 0,
\end{aligned}\]
and $\overline{\alpha}(X)=0$, $\overline{\alpha}(JX)=0$ are similarly obtained, which imply that
\[\mathfrak{h}^1_{\mathbb{C}} \subseteq \mathrm{Ann}(W^1 \oplus \overline{W}^{1}).\]
Then, let $\alpha \in \mathfrak{g}^*_{\mathbb{C}}$ which satisfies
\[ \alpha(\mathfrak{h}^1_{\mathbb{C}}) =0. \]
It yields that $\alpha(\mathfrak{g}^1 \otimes_{\mathbb{R}} \mathbb{C})=0$ and $\alpha(J\mathfrak{g}^1 \otimes_{\mathbb{R}} \mathbb{C})=0$.
From $\mathrm{Ann}(V^1_{\mathbb{C}}) = \mathfrak{g}^1_{\mathbb{C}}$, it follows that
\[d\alpha=0,\ d(J\alpha)=0,\]
which imply that \[d(\alpha^{1,0} +\alpha^{0,1})=0, \ d(\alpha^{1,0} -\alpha^{0,1})=0,\]
where $\alpha$ is decomposed as the sum of the $(1,0)$ and $(0,1)$-components $\alpha^{1,0}+\alpha^{0,1}$. Hence, it indicates that \[\alpha \in W^1 \oplus \overline{W}^{1},\] which follows that $\mathrm{Ann}(\mathfrak{h}^1_{\mathbb{C}}) \subseteq W^1 \oplus \overline{W}^{1}$ and thus \[\mathrm{Ann}(W^1 \oplus \overline{W}^{1}) \subseteq \mathfrak{h}^1_{\mathbb{C}}.\] Therefore, the case of $k=1$ is established.

By induction, assume that the statement $\mathrm{Ann}(W^i \oplus \overline{W}^{i}) = \mathfrak{h}^i_{\mathbb{C}}$ holds for $i \leq k$. Let us consider the case of $i=k+1$. For $\alpha \in W^{k+1}$, it follows that
\[\begin{aligned}
\alpha([\mathfrak{h}^{k},\mathfrak{g}]) &= -d\alpha(\mathfrak{h}^{k},\mathfrak{g}) =0,\\
\alpha(J[\mathfrak{h}^{k},\mathfrak{g}]) &= J\alpha([\mathfrak{h}^{k},\mathfrak{g}]) =\sqrt{-1}\alpha([\mathfrak{h}^{k},\mathfrak{g}]) \\
                                         &=-\sqrt{-1}d\alpha(\mathfrak{h}^{k},\mathfrak{g})=0,
\end{aligned} \]
and $\overline{\alpha}([\mathfrak{h}^{k},\mathfrak{g}])=0$, $\overline{\alpha}(J[\mathfrak{h}^{k},\mathfrak{g}])=0$ are similarly established, since $d\alpha \in (W^k \wedge W^k) \oplus (W^k \wedge \overline{W}^{k})$ and $\mathrm{Ann}(W^k \oplus \overline{W}^{k}) = \mathfrak{h}^k _{\mathbb{C}}$, which imply
\[\mathfrak{h}^{k+1}_{\mathbb{C}} \subseteq \mathrm{Ann}(W^{k+1} \oplus \overline{W}^{k+1}).\]
Then let $\alpha \in \mathfrak{g}^*_{\mathbb{C}}$ which satisfies
\[ \alpha(\mathfrak{h}^{k+1}_{\mathbb{C}}) =0.\]
It yields that $\alpha([\mathfrak{h}^{k},\mathfrak{g}]) =0$ and $\alpha(J[\mathfrak{h}^{k},\mathfrak{g}]) =0$, which imply
\[d\alpha(\mathfrak{h}^{k},\mathfrak{g})=0,\ \sqrt{-1}d(\alpha^{1,0}-\alpha^{0,1})(\mathfrak{h}^{k},\mathfrak{g})=0,\]
and thus
\[d\alpha^{1,0}(\mathfrak{h}^{k},\mathfrak{g})=0,\ d\alpha^{0,1}(\mathfrak{h}^{k},\mathfrak{g})=0.\]
It follows that, for $\forall \theta \in \mathfrak{g}_{\mathbb{C}}$,
\[\iota_{\theta} d\alpha^{1,0} (\mathfrak{h}^k)=0,\]
which is equivalent to, due to $\mathrm{Ann}(W^k \oplus \overline{W}^{k}) = \mathfrak{h}^k_{\mathbb{C}}$,
\[ \iota_{\theta} d\alpha^{1,0} \in W^k \oplus \overline{W}^{k}. \]
Therefore, from the lemma \ref{wcrt}, it yields that $d\alpha^{1,0} \in  (W^k \wedge W^k) \oplus (W^k \wedge \overline{W}^{k})$ and $d\overline{\alpha^{0,1}} \in  (W^k \wedge W^k) \oplus (W^k \wedge \overline{W}^{k})$ is similarly proved. Hence, it follows that \[\alpha=\alpha^{1,0}+\alpha^{0,1} \in W^{k+1} \oplus \overline{W}^{k+1},\] which indicates that $\mathrm{Ann}(\mathfrak{h}^{k+1}_{\mathbb{C}}) \subseteq W^{k+1} \oplus \overline{W}^{k+1}$ and thus \[\mathrm{Ann}(W^{k+1} \oplus \overline{W}^{k+1}) \subseteq \mathfrak{h}^{k+1}_{\mathbb{C}}.\]
The proof of the proposition is completed.
\end{proof}

\begin{proof}[Proof of Theorem \ref{equivalence}]
\eqref{i} $\Rightarrow$ \eqref{ii}: The statement \eqref{i} implies that $J$ is nilpotent and thus \eqref{ii} follows from Theorem \ref{nu=mu}.

\eqref{ii} $\Rightarrow$ \eqref{i}: The statement \eqref{ii} implies that $J$ is nilpotent by Observation \ref{existence_mu} and thus \eqref{i} follows from Theorem \ref{nu=mu}.

\eqref{ii} $\Longleftrightarrow$ \eqref{iii}: This is due to Proposition \ref{ann_equ}.
\end{proof}

\begin{definition}\label{newdef_nil_cplx}
The integer $t$ such that the condition \eqref{i}, \eqref{ii} or \eqref{iii} of Theorem \ref{equivalence} holds, will be called \emph{the step of the complex structure} $J$, denoted by a unified symbol $\nu(J)$ henceforth.
\end{definition}

\begin{proof}[Proof of Corollary \ref{equ_mcn}]
If the pair $(\mathfrak{g},J)$ is MaxN, it follows that
\[ \nu(J)=\dim_{\mathbb{C}}\mathfrak{g}=n, \]
which implies, by Theorem \ref{equivalence}, that the equalities hold:
\[\dim_{\mathbb{C}}W^i=i,\quad 1\leq i \leq n.\]
Thus, there is a basis $\{\omega^i\}_{i=1}^n$ of $\mathfrak{g}^{1,0}$ such that
\[ W^i=\mathrm{span}_{\mathbb{C}}\{\omega^1,\cdots,\omega^i\},\]
which is equivalent to \eqref{streq_nil_cplx_mcn} by the definition of $W^i$.

Conversely, the equation \eqref{streq_nil_cplx_mcn} forces the underlying Lie algebra $\mathfrak{g}$ to be nilpotent, since
the structure equation via $\{\mathfrak{Re}(\omega^i),\mathfrak{Im}(\omega^i)\}_{i=1}^n$ is of the type \eqref{streq_form}.
It is then clear that
\[\omega^i \in W^i, \quad 1 \leq i \leq n,\]
which leads to
\[\mathrm{span}_{\mathbb{C}}\{\omega^1,\cdots,\omega^i\} \subseteq W^i, \quad 1 \leq i \leq n.\]
Note that  when $i=n$ the above the inclusion is necessarily an equality, since $\{\omega^i\}_{i=1}^n$ is a basis of $\mathfrak{g}^{1,0}$,
which implies $\nu(J)\leq n$ and thus $J$ is nilpotent by Theorem \ref{equivalence}. Besides, it follows that
\[ \mathrm{span}_{\mathbb{C}}\{\omega^1\} = W^1, \]
due to the condition that the coefficients $\{A_{i,k-1}^k\}_{i<k-1}$, $\{B_{k-1,j}^k\}_{j=1}^{k-1}$ and $\{B_{i,k-1}^k\}_{i<k-1}$ of $d\omega^k$ don't all vanish for $1 \leq k \leq n$ in \eqref{streq_nil_cplx_mcn}. By induction, it follows easily that
\[\mathrm{span}_{\mathbb{C}}\{\omega^1,\cdots,\omega^i\} = W^i, \quad 2 \leq i \leq n,\]
which indicates $\nu(J)=\dim_{\mathbb{C}}\mathfrak{g}=n$.

It yields that the MaxN complex structure implies $\dim_{\mathbb{C}}\mathfrak{a}_i=i$ and $\dim_{\mathbb{C}}W^i=i$ for $0 \leq i \leq \nu(J)$.
So by the dimension reasons, the equality $(\mathfrak{g}/\mathfrak{a}_{\nu(J)-k})^*_{\mathbb{C}} = W^k \oplus \overline{W}^k$ holds for $0 \leq k \leq \nu(J)$.
\end{proof}

\begin{proof}[Proof of Corollary \ref{mcn_hierarchy}]
The maximal nilpotency of $(\mathfrak{g},J)$ implies that $\dim_{\mathbb{C}}\mathfrak{a}_k = k$ and $\dim_{\mathbb{C}}W^k =k$ for $0 \leq k \leq n$, by Theorem \ref{equivalence}. It is easy to verify that, for any given $0 \leq k \leq \nu(J)$, the ascending series $\{\mathfrak{a}_i\}_{i \in \mathbb{N}}$ of $(\mathfrak{a}_{k},J)$ is exactly $\{\mathfrak{a}_i\}_{i=0}^k$ and thus the number $\nu(J)$ for $(\mathfrak{a}_{k},J)$ is equal to $k$. Similarly by Corollary \ref{equ_mcn}, it follows that the equality $(\mathfrak{g}/\mathfrak{a}_{\nu(J)-k})^*_{\mathbb{C}} = W^k \oplus \overline{W}^k$ holds for $0 \leq k \leq \nu(J)$. Then for any given $0 \leq k \leq \nu(J)$, the ascending series $\{W^i\}_{i \in \mathbb{N}}$ of $(\mathfrak{g}/\mathfrak{a}_{\nu(J)-k},J)$ is exactly $\{W^i\}_{i=0}^k$, which implies that $\nu(J)$ of $(\mathfrak{g}/\mathfrak{a}_{\nu(J)-k},J)$ is equal to $k$. Therefore $(\mathfrak{a}_k,J)$ and $(\mathfrak{g}/\mathfrak{a}_{\nu(J)-k},J)$ are MaxN. The equalities $\mathfrak{a}_{\nu(J)-k} = \mathfrak{h}^k$ for $0 \leq k \leq \nu(J)$ result from Remark \ref{a=h} and $\dim_{\mathbb{C}}\mathfrak{a}_{\nu(J)-k} = \dim_{\mathbb{C}}\mathfrak{h}^k = n-k$ in the MaxN case .
\end{proof}

\begin{proof}[Proof of Corollary \ref{mcn_agdim}]
By \cite[Proposition 1]{FGV}, the equality $H^0(M,d\mathcal{O}_M)=W^1$ is established, where $H^0(M,d\mathcal{O}_M)$ denotes the space of $d$-closed holomorphic $1$-forms on $M$. The complex structure being MaxN implies that $\dim_{\mathbb{C}}W^1=1$. Then $a(M) \leq \dim_{\mathbb{C}} H^0(M,d\mathcal{O}_M) =1$ from \cite[Main theorem]{FGV}. By the definition of the Albanese variety $\mathrm{Alb}(M)$ (cf. \cite[Definition 2.1]{Rol09Geo}),
it follows that
\[\mathrm{Alb}(M)= \frac{H^0(M,d\mathcal{O}_M)^*}{\mathrm{im}(H_1(M,\mathbb{Z})\rightarrow H_1(M,\mathbb{C}) \rightarrow H^0(M,d\mathcal{O}_M)^*)},\]
where the projection $H_1(M,\mathbb{C}) \rightarrow H^0(M,d\mathcal{O}_M)^*$ is induced by the injection
\[ H^0(M,d\mathcal{O}_M) \oplus \overline{H^0(M,d\mathcal{O}_M)} \hookrightarrow H^1_{dR}(M,\mathbb{C}).\]
From \cite[Proposition 1]{FGV} and Proposition \ref{ann_equ}, we get
\[H^0(M,d\mathcal{O}_M) \oplus \overline{H^0(M,d\mathcal{O}_M)}= W^1 \oplus \overline{W}^1 = (\mathfrak{g}/\mathfrak{h}^1)^* \otimes_{\mathbb{R}} {\mathbb{C}}, \]
where $\mathfrak{h}^1= \mathfrak{g}^1+J\mathfrak{g}^1$ as in Definition \ref{annihilator}. Note that $\pi_1(M)=\Gamma$, and $\dim_{\mathbb{C}}W^1 =1$ since $J$ is MaxN. It yields that $\mathrm{Alb}(M)$ is a complex torus of complex dimension $1$. Hence, it follows from \cite[Main theorem]{FGV} that $a(M)=a(\mathrm{Alb}(M))=1$.

The Albanese map is given by the integration over $\Omega$ along paths on $M$,
where $\Omega$ is the basis of $H^0(M,d\mathcal{O}_M)=W^1$. Clearly, $\Omega$ is nowhere vanishing and thus the Albanese map $\Psi: M \rightarrow \mathrm{Alb}(M)$ is a submersion. Remmert's proper mapping theorem \cite[Page 32]{BHPV} implies that $\Psi$ is surjective, so $\Psi$ is a smooth fibration.
\end{proof}

\section{Proof of Theorem \ref{step2}}\label{pfstep2}
\begin{proof}[Proof of Theorem \ref{step2}]
The Lie algebra $\mathfrak{g}$ is $2$-step nilpotent, which is equivalent to $\mathfrak{g}^1 \subseteq \mathfrak{g}_1 \subsetneq \mathfrak{g}$.
The situation can be separated into two cases as follows
\begin{enumerate}
\item\label{s-1} $\mathfrak{g}_1$ is $J$-invariant,
\item\label{s-2} $\mathfrak{g}_1$ is not $J$-invariant.
\end{enumerate}

Case \eqref{s-1}: The equality $\mathfrak{a}_1= \mathfrak{g}_1$ holds and thus $\mathfrak{a}_2= \mathfrak{g}$ follows by the definition of $\mathfrak{a}_2$. Then $\nu(J)=2$.

Case \eqref{s-2}: We will show that $\mathfrak{a}_3= \mathfrak{g}$ in this case.
Define
\[\mathcal{V}^{J}_1 \stackrel{\vartriangle}{=} [\mathfrak{g}_1+J\mathfrak{g}_1,\,\mathfrak{g}]
=\mathrm{span}_{\mathbb{R}}\{ [y,x] \,\big| \,y\in \mathfrak{g}_1+J\mathfrak{g}_1, \,x\in\mathfrak{g}\},\]
introduced by Rollenske \cite{Rol09Geo}, which is a non-trivial $J$-invariant subspace of $\mathfrak{g}^1$ as shown in \cite[Lemma 3.2]{Rol09Geo}, and thus it is necessarily contained in $\mathfrak{g}_1$. Whereas $\mathfrak{a}_1$ is the largest subspace of $\mathfrak{g}_1$ which is invariant under $J$ by \cite[Corollary 5(i)]{CFGU}, it yields that \[\mathcal{V}^J_1 \subseteq \mathfrak{a}_1,\] which implies $[J\mathfrak{g}_1,\mathfrak{g}] \subseteq \mathfrak{a}_1$. By the very definition of $\mathfrak{a}_2$, it follows that $\mathfrak{g}_1 \subsetneq \mathfrak{a}_2$ and thus $\mathfrak{g}^1 \subsetneq \mathfrak{a}_2$. Therefore, $\mathfrak{a}_3=\mathfrak{g}$ by \cite[Lemma 6(ii)]{CFGU} or the very definition of $\mathfrak{a}_3$. So for 2-step ${\mathfrak g}$ one always  has
\[2\leq \nu(J) \leq 3.\]
This completes the proof of Theorem  \ref{step2}. \end{proof}
\begin{remark}
An example, satisfying $\nu(\mathfrak{g})=2$ and $\nu(J)=\dim_{\mathbb{C}}\mathfrak{g}=3$, is illustrated in \cite[Example 3]{CFGU}.
\end{remark}

\vspace{0.3cm}

\section{Example \ref{ex_max}}\label{ex}
The results developed in Section \ref{ncs_mcn} will be applied here to show the basic properties of Example \ref{ex_max}.
\begin{proposition}
Both types of complex structures $(\mathfrak{k}_{n}^{\mathrm{I}},J_{n}^{\mathrm{I}})$ and $(\mathfrak{k}_{n}^{\mathrm{II}},J_{n}^{\mathrm{II}})$ are MaxN for $n\geq1$. For the Lie Algebras, $\nu(\mathfrak{k}_n^{\mathrm{I}})=\nu(\mathfrak{k}_n^{\mathrm{II}})=3$ for $n\geq4$, and $\nu(\mathfrak{k}_3^{\mathrm{I}})=3,\nu(\mathfrak{k}_3^{\mathrm{II}})=2$ for $n=3$.
\end{proposition}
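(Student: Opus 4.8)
The plan is to treat the two assertions separately: first that each pair is maximal nilpotent, then the determination of the step $\nu(\mathfrak{g})$ of the underlying Lie algebra.

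For maximal nilpotency I would appeal directly to Corollary \ref{equ_mcn}. Both systems \eqref{type_1} and \eqref{type_2} are already presented in the shape \eqref{streq_nil_cplx_mcn} relative to the coframe $\{\omega^i\}$, so the only thing to verify is that, for each $k$, the coefficients carrying the index $k-1$ are not all zero. This is immediate by inspection: in every ``$\sqrt{-1}$-type'' equation $d\omega^k=\sqrt{-1}(\omega^{k-1}\wedge\overline{\omega}^1-\omega^1\wedge\overline{\omega}^{k-1})$ one has $B^k_{k-1,1}=\sqrt{-1}\neq0$, in every ``$\omega^1\wedge(\cdots)$-type'' equation $d\omega^k=\omega^1\wedge(\omega^{k-1}+\overline{\omega}^{k-1})$ one has $A^k_{1,k-1}=1\neq0$, and the base case $d\omega^2=\omega^1\wedge\overline{\omega}^1$ gives $B^2_{1,1}\neq0$. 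Corollary \ref{equ_mcn} then yields $\nu(J)=\dim_{\mathbb{C}}\mathfrak{g}=n$ for both families and all $n\geq1$, the two systems coinciding when $n\leq2$.

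For the step of $\mathfrak{g}$ I would compute the ascending flag $\{V^i\}$ of Section \ref{ncs_mcn}, recalling that $\nu(\mathfrak{g})$ is the smallest $s$ with $V^s=\mathfrak{g}^*$. Writing $R_j=\omega^j+\overline{\omega}^j$ and $I_j=\omega^j-\overline{\omega}^j$, the first task is to read off $V^1$, the closed forms. Since each $d\omega^j$ (and $d\overline{\omega}^j$) carries the index $j-1$, which is absent from every lower differential, the closedness condition decouples index by index, and the relations $d\omega^j=-d\overline{\omega}^j$ recorded in \eqref{type_1} and \eqref{type_2} determine exactly which real combinations $R_j$ survive: for Type I these are $R_2$ and the odd $R_j$ ($j\geq3$), and for Type II they are $R_2$ and the even $R_j$ ($j\geq4$). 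Thus $V^1_{\mathbb{C}}$ is spanned by $\omega^1,\overline{\omega}^1$ together with these $R_j$. The second task is $V^2$: in Type I the form $I_2$ and every even-indexed $\omega^j,\overline{\omega}^j$ ($j\geq4$) lie in $V^2$, since their differentials equal $\omega^1\wedge\overline{\omega}^1$ or $\omega^1\wedge R_{j-1}$, both in $\wedge^2V^1$; the parity-swapped statement holds for Type II.

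With $V^1$ and $V^2$ described, $\nu(\mathfrak{g})=3$ follows from two checks. For $V^3=\mathfrak{g}^*$ I would verify term by term that each $d\omega^k$ lies in $\wedge^2V^2$: the ``$\omega^1\wedge(\cdots)$'' differentials already sit in $\wedge^2V^1$, while each ``$\sqrt{-1}$'' differential pairs $\omega^1$ with $\omega^{k-1}\in V^2$, hence lands in $\wedge^2V^2$. For the strict inclusion $V^2\subsetneq\mathfrak{g}^*$ I would exhibit one form outside $V^2$, namely $I_3$ in Type I and $I_4$ in Type II. This is the crux and the only genuinely delicate computation: expanding $dI_3=2\sqrt{-1}(\omega^2\wedge\overline{\omega}^1-\omega^1\wedge\overline{\omega}^2)$ in the $R,I$ basis, the $V^1$-components cancel but a term proportional to $I_2\wedge I_1$ remains, and since $I_2\notin V^1$ this term cannot belong to $\wedge^2V^1$; hence $I_3\notin V^2$, whereas $dI_3\in\wedge^2V^2$ places $I_3\in V^3$. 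The Type II argument is identical, with $I_4$ and a surviving $\omega^3$-term. Finally the dependence on $n$ is pure parity bookkeeping: a form escaping $V^2$ first occurs at index $3$ for Type I but only at index $4$ for Type II, so $\nu(\mathfrak{k}_3^{\mathrm{I}})=3$ while $\mathfrak{k}_3^{\mathrm{II}}$ has every one-form already in $V^2$, forcing $V^2=\mathfrak{g}^*$ and $\nu(\mathfrak{k}_3^{\mathrm{II}})=2$; for every $n\geq4$ both families acquire such a form and $\nu(\mathfrak{g})=3$.
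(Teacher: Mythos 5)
Your proposal is correct and follows essentially the same route as the paper: maximal nilpotency is read off from Corollary \ref{equ_mcn} by checking the index-$(k-1)$ coefficients, and the step of the Lie algebra is determined by computing the filtration $\{V^i\}$, with $V^1$ identified from the relations $d\omega^j=-d\overline{\omega}^j$ and the parity pattern then placing each $\omega^j$ in $V^2$ or in $V^3\setminus V^2$. The only cosmetic difference is that you work with the real and imaginary combinations $R_j$, $I_j$ and avoid the paper's explicit split into the cases $n=2k$ and $n=2k-1$, which changes nothing of substance.
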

\begin{proof}
It is easy to see that the two pairs $(\mathfrak{k}_{n}^{\mathrm{I}},J_{n}^{\mathrm{I}})$ and $(\mathfrak{k}_{n}^{\mathrm{II}},J_{n}^{\mathrm{II}})$ are MaxN for $n \geq 1$ by Corollary \ref{equ_mcn}, from their structure equations. The key in figuring out the steps of $\mathfrak{k}_{n}^{\mathrm{I}}$ and $\mathfrak{k}_{n}^{\mathrm{II}}$ lies in $V^1$. We will consider the following two cases separately:
\begin{enumerate}
\item\label{even} $n = 2k$,  where $k \geq 2$;
\item\label{odd} $n = 2k-1$, where $k \geq 2$.
\end{enumerate}

Case \eqref{even}: From the structure equations of $(\mathfrak{k}_{2k}^{\mathrm{I}},J_{2k}^{\mathrm{I}})$ and $(\mathfrak{k}_{2k}^{\mathrm{II}},J_{2k}^{\mathrm{II}})$, it yields that
\[\begin{split}
V^{1}_{\!\mathbb{C}\,(\mathrm{I},2k)} &= \langle\omega^1\!, \, \overline{\omega}^1\!,\,\omega^2+\overline{\omega}^2\!,\, \omega^3 + \overline{\omega}^{3}\!,\, \cdots,\,\omega^{2k-1}+\overline{\omega}^{2k-1}\rangle,\\
V^1_{\!\mathbb{C}\,({\mathrm{II}},2k)}  &= \langle\omega^1\!,\, \overline{\omega}^1\!,\,\omega^2+\overline{\omega}^2\!,\,\omega^4 + \overline{\omega}^{4}\!,\,\cdots,\omega^{2k}+\overline{\omega}^{2k}\rangle,\\
\end{split}\]
where the complexifixed $V^i$-spaces of $(\mathfrak{k}_{2k}^{\mathrm{I}},J_{2k}^{\mathrm{I}})$ and $(\mathfrak{k}_{2k}^{\mathrm{II}},J_{2k}^{\mathrm{II}})$ are denoted by $V^{i}_{\!\mathbb{C}\,(\mathrm{I},2k)}$ and $V^i_{\!\mathbb{C}\,({\mathrm{II}},2k)}$ respectively, based on the definition of $\{V^i\}_{i \in \mathbb{N}}$ in Section \ref{ncs_mcn}.
Similar notations are applied below. It then follows that, for any $2\leq i \leq k$,
\[\begin{split}
\omega^{2i}, \,\omega^{2} \in V^2_{\!\mathbb{C}\,(\mathrm{I},2k)}, \qquad
\omega^{2i-1} \in V^3_{\!\mathbb{C}\,(\mathrm{I},2k)} \setminus  V^2_{\!\mathbb{C}\,(\mathrm{I},2k)},\\
\omega^{2i-1}\!, \,\omega^{2} \in  V^2_{\!\mathbb{C}\,(\mathrm{II},2k)}, \qquad
\omega^{2i} \in V^3_{\!\mathbb{C}\,(\mathrm{II},2k)} \setminus V^2_{\!\mathbb{C}\,(\mathrm{II},2k)}.
\end{split}\]
It implies that $\nu(\mathfrak{k}_{2k}^{\mathrm{I}})=\nu(\mathfrak{k}_{2k}^{\mathrm{II}})=3$ for all $k \geq2$.

Case \eqref{odd}: Similarly, from the structure equations of $(\mathfrak{k}_{2k-1}^{\mathrm{I}},J_{2k-1}^{\mathrm{I}})$ and $(\mathfrak{k}_{2k-1}^{\mathrm{II}},J_{2k-1}^{\mathrm{II}})$, it yields that
\[\begin{split}
V^1_{\!\mathbb{C}\,(\mathrm{I},2k-1)} &= \langle\omega^1\!, \, \overline{\omega}^1\!, \,\omega^2+\overline{\omega}^2\!, \, \omega^3 + \overline{\omega}^{3}\!, \,\cdots\!, \, \omega^{2k-3}+\overline{\omega}^{2k-3}\!, \, \omega^{2k-1}+\overline{\omega}^{2k-1}\rangle,\\
V^1_{\!\mathbb{C}\,(\mathrm{II},2k-1)} &= \langle\omega^1\!, \, \overline{\omega}^1\!, \,\omega^2+\overline{\omega}^2\!, \,\omega^4 + \overline{\omega}^{4}\!, \,\cdots\!, \, \omega^{2k-2}+\overline{\omega}^{2k-2}\rangle.\\
\end{split}\]
Then, it follows that, for $2 \leq i \leq k-1$,
\[\begin{split}
\omega^{2i}\!, \,\omega^2 \in V^2_{\mathbb{C}\,(\mathrm{I},2k-1)}, \qquad
\omega^{2k-1}\!, \,\omega^{2i-1} \in V^3_{\mathbb{C}\,(\mathrm{I},2k-1)} \setminus V^2_{\mathbb{C}\,(\mathrm{I},2k-1)},\\
\omega^{2k-1}\!, \,\omega^{2i-1}\!, \,\omega^{2} \in V^2_{\mathbb{C}\,(\mathrm{II},2k-1)}, \qquad
\omega^{2i} \in V^3_{\mathbb{C}\,(\mathrm{II},2k-1)} \setminus V^2_{\mathbb{C}\,(\mathrm{II},2k-1)}.
\end{split}\]
It implies that $\nu(\mathfrak{k}_{2k-1}^{\mathrm{I}})=\nu(\mathfrak{k}_{2k-1}^{\mathrm{II}})=3$ for $k \geq3$. When $k=2$, it turns out that
\[
\omega^{3} \in V^3_{\mathbb{C}\,(\mathrm{I},3)} \setminus V^2_{\mathbb{C}\,(\mathrm{I},3)}, \qquad
V^2_{\mathbb{C}\,(\mathrm{II},3)} = V^3_{\mathbb{C}\,(\mathrm{II},3)},
\]
which indicates that $\nu(\mathfrak{k}_3^{\mathrm{I}})=3$ and $\nu(\mathfrak{k}_3^{\mathrm{II}})=2$.
\end{proof}
\begin{remark}
It follows easily from the proof above that $\mathfrak{k}_{2k-1}^{\mathrm{I}}$ and $\mathfrak{k}_{2k-1}^{\mathrm{II}}$ are not isomorphic Lie algebras for $k \geq 2$, due to the different dimensions of $V^1_{\mathbb{C}\,(\mathrm{I},2k-1)}$ and $V^1_{\mathbb{C}\,(\mathrm{II},2k-1)}$.
\end{remark}

\vspace{0.3cm}

\section{A structure theorem}\label{structure}
\subsection{The general structure of a MaxN pair $(\mathfrak{g},J)$}
\begin{proof}[Proof of Theorem \ref{equ_mcn_refined}]
The MaxN complex structure implies the existence of a basis $\{\omega^k\}_{k=1}^n$ of $\mathfrak{g}^{1,0}$, which satisfies \eqref{streq_nil_cplx_mcn}. The induction will be applied to modify the basis $\{\omega^k\}_{k=1}^n$ such that it enjoys the desired property,
where $\{d\omega^k\}_{k=1}^n$ still maintain the type of \eqref{streq_nil_cplx_mcn}.

It is clear that $d\omega^1=0$ and thus the statement \eqref{dpt} holds. Assume that $\omega^i$ satisfies \eqref{dpt} or
\eqref{indpt} for each $1 \leq i \leq k$, and \eqref{streq_nil_cplx_mcn} is established for $\{\omega^i\}_{i=1}^n$. Let us consider
the case $i=k+1$ and suppose that $\omega^{k+1}$ does not satisfy the statement \eqref{indpt}, namely, there exists $a,b\in\mathbb{C}$, which are not both zeros, such that
\[a \,d\omega^{k+1} + b\,d\overline{\omega}^{k+1} \in \mathrm{span}_{\mathbb{C}}\{d\omega^{k},d\overline{\omega}^{k},\cdots,d\omega^2,d\overline{\omega}^2,d\omega^1,d\overline{\omega}^1\}.\]
It implies that $|a|=|b|$. Otherwise, the inclusion
\[\bar{b} \,d\omega^{k+1} + \bar{a} \,d\overline{\omega}^{k+1} \in \mathrm{span}_{\mathbb{C}}\{d\omega^{k},d\overline{\omega}^{k},\cdots,d\omega^2,d\overline{\omega}^2,d\omega^1,d\overline{\omega}^1\},\]
would yield that
\[ d\omega^{k+1} \in \mathrm{span}_{\mathbb{C}}\{d\omega^{k},d\overline{\omega}^{k},\cdots,d\omega^2,d\overline{\omega}^2,d\omega^1,d\overline{\omega}^1\},\]
which is a contradiction with the structure equation \eqref{streq_nil_cplx_mcn} and Remark \ref{d-indept}. The condition $|a|=|b|\neq 0$ enable us to use $c\omega^{k+1}$ as a substitute for $\omega^{k+1}$, where $|c|=1$, so that  $\{\omega^i\}_{i\leq k}$ are left unchanged, while
\[ d\omega^{k+1} + d\overline{\omega}^{k+1} \in \mathrm{span}_{\mathbb{C}}\{d\omega^{k},d\overline{\omega}^{k},\cdots,d\omega^2,d\overline{\omega}^2,d\omega^1,d\overline{\omega}^1\}.\]
It is clear that $d\omega^{k+1}$ still satisfies the type given in \eqref{streq_nil_cplx_mcn}. It follows that
\[ d\omega^{k+1} + d\overline{\omega}^{k+1} = \sum_{i=1}^k a_i d\omega^i + b_i d\overline{\omega}^i, \]
for some $a_i,b_i \in \mathbb{C}$, and it is easy to see that we may assume $\overline{b}_i = a_i$ for each $1 \leq i \leq k$.
It yields that
\[d\left(\omega^{k+1}-\sum_{i=1}^k a_i\omega^i\right) + d\overline{\left(\omega^{k+1}-\sum_{i=1}^k a_i\omega^i\right)} =0,\]
which enables us to replace $\omega^{k+1}$ by
\[\omega^{k+1}-\sum_{i=1}^k a_i\omega^i\]
with $\{\omega^i\}_{i\leq k}$ left unchanged. The operation can be verified such that $d\omega^{k+1}$ still maintains the type as in the structure equation \eqref{streq_nil_cplx_mcn} and $\omega^{k+1}$ now satisfies \eqref{dpt}. This completes the proof of Theorem \ref{equ_mcn_refined}.
\end{proof}

\begin{remark}\label{convention}
Let $(\mathfrak{g},J)$ be MaxN.
\begin{enumerate}
\item\label{omega2} It is clear that $d\omega^2=B_{11}^2\omega^1\wedge \overline{\omega}^1$ where $B_{11}^2\neq0$ due to the structure equation \eqref{streq_nil_cplx_mcn} and thus $\omega^2$ doesn't satisfies the statement \eqref{indpt} in Theorem \ref{equ_mcn_refined}. Then $\omega^2$ can always be assumed to satisfy $d\omega^2=\omega^1\wedge\overline{\omega}^1$ and the statement \eqref{dpt} is established for $\omega^2$, after $\omega^2$ is substituted for $\frac{\omega^2}{B_{11}^2}$.
\item\label{omegak} For $3\leq k \leq n$, it can always be assumed that $d\omega^k$ has no summand of some multiple of $\omega^1\wedge\overline{\omega}^1$, since it is possible to replace $\omega^k$ by $\omega^k-c\omega^2$ for some $c\in\mathbb{C}$, such that $\{\omega^k\}_{k=1}^n$ still enjoys the property of Theorem \ref{equ_mcn_refined}.
\end{enumerate}
\end{remark}

It is clear that $1,2\in \mathrm{Dpt}(J)$, the index set introduced in Definition \ref{DPT}. Example \ref{ex_max} shows that both the case $3 \in \mathrm{Dpt}(J)$ and $3 \notin \mathrm{Dpt}(J)$ can occur, even if we impose the condition $\nu(\mathfrak{g})=3$  on the MaxN pair $(\mathfrak{g},J)$.

\begin{proof}[Proof of Lemma \ref{invariance_dpt}]
Let $\omega^k=\sum_{j=1}^n a^k_j\tau^j$ where $\det(a^k_j)\neq0$. The equation $d\omega^1=0$ implies that
\[\sum_{j=2}^n a^1_j d\tau^j=0, \]
and thus $a^1_j=0$ for $j \geq 2$ due to the linear independence of $\{d\tau^2,\cdots,d\tau^n\}$ from Remark \ref{d-indept}, since the basis $\{\tau^j\}_{j=1}^n$ satisfies \eqref{streq_nil_cplx_mcn}. By induction, assume that the equality $a^i_j=0$ for $j\geq i+1$ is established when $1 \leq i \leq k-1$. Let us consider the case $i=k$. The equality follows \[d\omega^k =\sum_{j=1}^n a^k_j d\tau^j=\sum_{j\leq k}^n a^k_j d\tau^j +\sum_{j\geq k+1}^n a^k_j d\tau^j.\]
The left hand side of the above is just a linear combination of $\tau^p\wedge\tau^q$ and $\tau^p\wedge\bar{\tau}^q$, where $p,q\leq k-1$, while the coefficients $\{a^k_j\}_{j\geq k+1}$ on the right hand side all need to vanish, due to the structure equation \eqref{streq_nil_cplx_mcn} and the induction. Therefore, it yields that $a^k_j=0$ for $1 \leq k \leq n$ and $j \geq k+1$. Then $a^k_k\neq0$ for $1 \leq k \leq n$ follows easily from $\det(a^k_j)\neq0$.

To prove the second statement in the lemma, we will show that, for each $1 \leq k \leq n$,
\[ \omega^k\ \text{satisfies}\ \eqref{indpt} \ \Longleftrightarrow \ \tau^k\ \text{satisfies}\ \eqref{indpt}.\]
Clearly, only one direction needs to be shown. Suppose that $\tau^k$ satisfies \eqref{indpt}. The inclusion
\[ a \,d\omega^k + b\,d\overline{\omega}^k \, \in \, \mathrm{span}_{\mathbb{C}}\{d\omega^{k-1}\!,\,d\overline{\omega}^{k-1}\!,\,\cdots ,\,d\omega^2\!,\,d\overline{\omega}^2\!,\,d\omega^1\!,\,d\overline{\omega}^1\},\]
implies that
\[ a a^k_k d\tau^k + b \bar{a}^k_k d\overline{\tau}^k \,\in \, \mathrm{span}_{\mathbb{C}}\{d\tau^{k-1}\!,\,d\overline{\tau}^{k-1}\!,\,\cdots\!,\,d\tau^2,d\overline{\tau}^2\!,\,d\tau^1\!,\,d\overline{\tau}^1\},\]
since $\omega^k=\sum_{j=1}^n a^k_j\tau^j$, where $a^k_j=0$ for $j\geq k+1$ and $a^k_k\neq0$. Therefore, $a=b=0$.
\end{proof}

The admissible coframe $\{\omega^k\}_{k=1}^n$ and the index set $\mathrm{Dpt}(J)$ in Definition \ref{DPT} will be applied to study the structure of the MaxN pair $(\mathfrak{g},J)$ henceforth.

\begin{corollary}\label{basisV1}
Let $(\mathfrak{g},J)$ be MaxN and $\{\omega^k\}_{k=1}^n$ is an admissible coframe. Then the set
\[\{\omega^1,\overline{\omega}^1,\omega^k+\overline{\omega}^k \,\big| \,k \in \mathrm{Dpt}(J), k\geq 2\}\]
is a basis of $V^1_{\mathbb{C}}$, where $V^1_{\mathbb{C}}=V^1 \otimes \mathbb{C}$.
\end{corollary}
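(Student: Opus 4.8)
The plan is to prove the two inclusions separately: that the displayed set is contained in $V^1_{\mathbb{C}}$ and linearly independent (the routine half), and that it spans $V^1_{\mathbb{C}}$ (the substantial half). Throughout I use that $V^1_{\mathbb{C}}$ is exactly the space of $d$-closed forms in $\mathfrak{g}^*_{\mathbb{C}}$, and that for a MaxN pair the admissible coframe gives $W^i=\mathrm{span}_{\mathbb{C}}\{\omega^1,\dots,\omega^i\}$ by Corollary~\ref{equ_mcn}. For the easy half, $d\omega^1=0$ gives $\omega^1,\overline{\omega}^1\in V^1_{\mathbb{C}}$, while for $k\in\mathrm{Dpt}(J)$ condition \eqref{dpt} of Theorem~\ref{equ_mcn_refined} yields $d(\omega^k+\overline{\omega}^k)=d\omega^k+d\overline{\omega}^k=0$; independence is immediate, since in the basis $\{\omega^j,\overline{\omega}^j\}$ of $\mathfrak{g}^*_{\mathbb{C}}$ these vectors have pairwise disjoint $\omega^j$-supports.

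For spanning I would take an arbitrary $\alpha=\sum_{k=1}^n(a_k\omega^k+b_k\overline{\omega}^k)\in V^1_{\mathbb{C}}$ and prove, by downward induction on $k$ from $n$ to $2$, that $a_k=b_k=0$ when $k\notin\mathrm{Dpt}(J)$ and $a_k=b_k$ when $k\in\mathrm{Dpt}(J)$; granting this, $\alpha$ is a combination of the listed generators (with $a_1,b_1$ free). The engine of the induction is that, once these relations hold for all indices above some $m$, each such term contributes nothing to $d\alpha$: for $k\notin\mathrm{Dpt}(J)$ it is absent, and for $k\in\mathrm{Dpt}(J)$ we have $a_k\,d\omega^k+b_k\,d\overline{\omega}^k=a_k(d\omega^k+d\overline{\omega}^k)=0$. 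Hence $d\alpha=0$ collapses to $\sum_{k=2}^{m}(a_k\,d\omega^k+b_k\,d\overline{\omega}^k)=0$, that is, $a_m\,d\omega^m+b_m\,d\overline{\omega}^m\in S_{m-1}:=\mathrm{span}_{\mathbb{C}}\{d\omega^j,d\overline{\omega}^j:2\le j\le m-1\}$. If $m\notin\mathrm{Dpt}(J)$, condition \eqref{indpt} of Theorem~\ref{equ_mcn_refined} forces $a_m=b_m=0$; if $m\in\mathrm{Dpt}(J)$, then $d\overline{\omega}^m=-d\omega^m$, so the membership reads $(a_m-b_m)\,d\omega^m\in S_{m-1}$, and I need the claim $d\omega^m\notin S_{m-1}$ to conclude $a_m=b_m$.

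The main obstacle is precisely this claim $d\omega^m\notin S_{m-1}$ for $m\in\mathrm{Dpt}(J)$: Remark~\ref{d-indept} only gives independence of the $\{d\omega^j\}$ among themselves, not against the conjugates $\{d\overline{\omega}^j\}$ present in $S_{m-1}$. I would settle it by contradiction, combining integrability with the $W$-filtration. Suppose $d\omega^m=\sum_{j=2}^{m-1}(c_j\,d\omega^j+e_j\,d\overline{\omega}^j)$ and set $\eta=\omega^m-\sum_{j=2}^{m-1}c_j\omega^j\in\mathfrak{g}^{1,0}$, so that $d\eta=\sum_{j=2}^{m-1}e_j\,d\overline{\omega}^j$. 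On one hand $d\eta\in\bigwedge^{2,0}\mathfrak{g}^*\oplus\bigwedge^{1,1}\mathfrak{g}^*$ by integrability \eqref{integrability}; on the other hand every $d\overline{\omega}^j$ lies in $\bigwedge^{0,2}\mathfrak{g}^*\oplus\bigwedge^{1,1}\mathfrak{g}^*$, so $d\eta$ is forced to be of pure type $(1,1)$ and equals $\sum_{j} e_j\,\overline{(d\omega^j)^{1,1}}$. By \eqref{streq_nil_cplx_mcn} the $(1,1)$-part of each $d\omega^j$ involves only indices $<j\le m-1$, whence $d\eta\in W^{m-2}\wedge\overline{W}^{m-2}\subseteq(W^{m-2}\wedge W^{m-2})\oplus(W^{m-2}\wedge\overline{W}^{m-2})$. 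By the very definition of $W^{m-1}$ in Definition~\ref{another_nil_cplx}, this places $\eta\in W^{m-1}=\mathrm{span}_{\mathbb{C}}\{\omega^1,\dots,\omega^{m-1}\}$, contradicting that $\eta$ has $\omega^m$-coefficient $1$. This contradiction gives $d\omega^m\notin S_{m-1}$, closing the induction and hence the spanning. (I note this $\eta$-argument in fact works for every $m$, so it could replace the appeal to \eqref{indpt} as well, but invoking Theorem~\ref{equ_mcn_refined} directly in the non-$\mathrm{Dpt}$ case keeps the bookkeeping shorter.)
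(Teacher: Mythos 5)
Your proof is correct and follows essentially the same route as the paper's: expand a $d$-closed form in the admissible coframe and peel off the top index by downward induction, using condition \eqref{indpt} of Theorem \ref{equ_mcn_refined} when $m\notin\mathrm{Dpt}(J)$ and the non-membership $d\omega^m\notin S_{m-1}$ when $m\in\mathrm{Dpt}(J)$. The only place you diverge is in justifying $d\omega^m\notin S_{m-1}$: your $\eta$-argument via integrability and the $W$-filtration is valid but heavier than necessary, since by the nonvanishing condition in \eqref{streq_nil_cplx_mcn} the form $d\omega^m$ has a nonzero component on a basis $2$-form involving $\omega^{m-1}$ or $\overline{\omega}^{m-1}$, while every element of your $S_{m-1}$ is supported on indices at most $m-2$; this settles the claim immediately and is what the paper tacitly uses. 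One small caveat: your parenthetical remark that the $\eta$-argument could replace the appeal to \eqref{indpt} in the non-$\mathrm{Dpt}$ case is not right, because knowing only $d\omega^m\notin S_{m-1}$ does not exclude $a_m\,d\omega^m+b_m\,d\overline{\omega}^m\in S_{m-1}$ with $|a_m|=|b_m|\neq 0$; since you do invoke \eqref{indpt} there, this does not affect the proof.
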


\begin{proof}
Let $\alpha \in V^1_{\mathbb{C}}$, that is, $\alpha=\sum_{k=1}^n a_k\omega^k + b_k \overline{\omega}^k$ for $a_k,b_k \in \mathbb{C}$ and $d \alpha=0$. It follows that
\[ \sum_{k=1}^n a_kd\omega^k + b_k d\overline{\omega}^k =0.\]
If $\omega^n$ satisfies the statement \eqref{indpt} in Theorem \ref{equ_mcn_refined}, it yields that $a_n=b_n=0$. Otherwise, $\omega^n$ must satisfy the statement \eqref{dpt}, so we have
\[ (a_n-b_n)d\omega^n \in \mathrm{span}_{\mathbb{C}}\{d\omega^{n-1}\!,\,d\overline{\omega}^{n-1}\!,\,\cdots\!,\,d\omega^2,d\overline{\omega}^2\!,\,
d\omega^1\!,\,d\overline{\omega}^1\}, \]
which implies that $a_n=b_n$ and $\alpha= a_n (\omega^n+\overline{\omega}^n)+\sum_{k=1}^{n-1} (a_k \omega^k + b_k \overline{\omega}^k)$, since
\[d\omega^n \notin \mathrm{span}_{\mathbb{C}}\{d\omega^{n-1},d\overline{\omega}^{n-1},\cdots,d\omega^2,d\overline{\omega}^2,d\omega^1,d\overline{\omega}^1\}.\]
This indicates that  $n\in \mathrm{Dpt}(J)$ and $\alpha$ has a summand of $a_n(\omega^n+\overline{\omega}^n)$, if $a_n,b_n$ are not both zeroes. This completes the proof of the corollary.
\end{proof}
\begin{remark}\label{dimV1}
Let $(\mathfrak{g},J)$ be MaxN. Then $\dim_{\mathbb{C}} V^1_{\mathbb{C}} = |\mathrm{Dpt}(J)|+1$,
where $|\mathrm{Dpt}(J)|$ denotes the number of elements in $\mathrm{Dpt}(J)$.
\end{remark}

\begin{theorem}\label{elementsV2}
Let $(\mathfrak{g},J)$ be MaxN and $\{\omega^k\}_{k=1}^n$ is an admissible coframe. Then, for $3\leq k \leq n$,
\[\omega^k \in V^2_{\mathbb{C}}\quad \Longleftrightarrow\quad
d\omega^k = \sum_{\begin{subarray}{c} 2\leq j\leq k-1 \\ j \in \mathrm{Dpt}(J) \end{subarray}} \!a_j \omega^1 \wedge (\omega^j + \overline{\omega}^j),\ \text{where}\ \,a_{k-1}\neq 0. \]
In particular, $\omega^k \in V^2_{\mathbb{C}}$ implies that
\[k-1 \in \mathrm{Dpt}(J),\quad k \notin \mathrm{Dpt}(J)\quad \text{and}\quad \omega^{k+1}\notin  V^2_{\mathbb{C}}.\]
\end{theorem}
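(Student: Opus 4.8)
\emph{The plan} is to reduce everything to a computation inside $\bigwedge^2 V^1_{\mathbb{C}}$, for which Corollary \ref{basisV1} furnishes the explicit basis $\{\omega^1,\overline{\omega}^1\}\cup\{\eta^j:=\omega^j+\overline{\omega}^j\mid j\in\mathrm{Dpt}(J),\,j\geq2\}$ of $V^1_{\mathbb{C}}$. I would first invoke Remark \ref{convention} to normalize the admissible coframe so that $d\omega^2=\omega^1\wedge\overline{\omega}^1$ and $d\omega^k$ carries no $\omega^1\wedge\overline{\omega}^1$ summand for $k\geq3$. The direction ($\Leftarrow$) is then immediate: each $\omega^1$ and each $\eta^j$ lies in $V^1_{\mathbb{C}}$, so the displayed expression for $d\omega^k$ lies in $\bigwedge^2 V^1_{\mathbb{C}}$, which is exactly the assertion $\omega^k\in V^2_{\mathbb{C}}$.

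For the direction ($\Rightarrow$), the key point I would exploit is that $d\omega^k$ has \emph{no} $(0,2)$-component, by the integrability condition \eqref{integrability}. Writing a general element of $\bigwedge^2 V^1_{\mathbb{C}}$ in the above basis, its $(0,2)$-part is a combination of the forms $\overline{\omega}^1\wedge\overline{\omega}^j$ (coming from $\overline{\omega}^1\wedge\eta^j$) and $\overline{\omega}^i\wedge\overline{\omega}^j$ (coming from $\eta^i\wedge\eta^j$); these are distinct, hence linearly independent, members of $\bigwedge^{0,2}\mathfrak{g}^*$, so matching against $d\omega^k$ forces all their coefficients to vanish. The surviving terms are $\omega^1\wedge\overline{\omega}^1$ together with the $\omega^1\wedge\eta^j$; the normalization discards the former for $k\geq3$, leaving $d\omega^k=\sum_j a_j\,\omega^1\wedge(\omega^j+\overline{\omega}^j)$ with $j\in\mathrm{Dpt}(J)$, and the structure equation \eqref{streq_nil_cplx_mcn} confines the indices to $2\leq j\leq k-1$. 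To obtain $a_{k-1}\neq0$ I would read off the coefficients of $d\omega^k$ indexed by $k-1$: in this expression they are precisely the coefficients of $\omega^1\wedge\omega^{k-1}$ and $\omega^1\wedge\overline{\omega}^{k-1}$, both equal to $a_{k-1}$, while every other index-$(k-1)$ coefficient vanishes. Since the MaxN hypothesis recorded in Corollary \ref{equ_mcn} forbids these from being simultaneously zero, $a_{k-1}\neq0$.

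The three ``in particular'' assertions then follow by bootstrapping. That $k-1\in\mathrm{Dpt}(J)$ is immediate, since $a_{k-1}\neq0$ and the sum runs only over $\mathrm{Dpt}(J)$. For $k\notin\mathrm{Dpt}(J)$ I would compute the $(2,0)$-part of $d\omega^k+d\overline{\omega}^k$, which equals $\sum_j a_j\,\omega^1\wedge\omega^j$; this is nonzero because $a_{k-1}\neq0$ and the $\omega^1\wedge\omega^j$ are independent, so $d\omega^k+d\overline{\omega}^k\neq0$, meaning $\omega^k$ satisfies condition \eqref{indpt} of Theorem \ref{equ_mcn_refined} rather than \eqref{dpt}, i.e. $k\notin\mathrm{Dpt}(J)$. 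Finally, if $k<n$ and $\omega^{k+1}\in V^2_{\mathbb{C}}$, then applying the already-proved equivalence at index $k+1$ would produce $d\omega^{k+1}=\sum_{2\leq j\leq k}a'_j\,\omega^1\wedge(\omega^j+\overline{\omega}^j)$ with $a'_k\neq0$ and $k\in\mathrm{Dpt}(J)$, contradicting $k\notin\mathrm{Dpt}(J)$; hence $\omega^{k+1}\notin V^2_{\mathbb{C}}$.

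I expect the main obstacle to be the bookkeeping in the ($\Rightarrow$) direction: correctly identifying which basis bivectors of $\bigwedge^2 V^1_{\mathbb{C}}$ contribute a $(0,2)$-part and verifying their linear independence, and confirming that the normalization of Remark \ref{convention} is genuinely available so that the $\omega^1\wedge\overline{\omega}^1$ term may be dropped. Everything else is routine once the basis of $V^1_{\mathbb{C}}$ from Corollary \ref{basisV1} is in hand.
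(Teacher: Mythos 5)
Your proposal is correct and follows essentially the same route as the paper's proof: both reduce to writing $d\omega^k$ in the basis of $\bigwedge^2 V^1_{\mathbb{C}}$ supplied by Corollary \ref{basisV1}, kill the unwanted terms by the absence of a $(0,2)$-component and the normalization of Remark \ref{convention}, extract $a_{k-1}\neq 0$ from the structure equation \eqref{streq_nil_cplx_mcn}, and then read off the three consequences exactly as you do. Your write-up is merely more explicit about which basis bivectors contribute a $(0,2)$-part, which is a fair expansion of the paper's one-line appeal to the type of $d\omega^k$.
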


\begin{proof}
From the definition of $V^2$ and Corollary \ref{basisV1}, the condition $\omega^k \in V^2 _{\mathbb{C}}$ implies that
\[d\omega^k \in \bigwedge^2 \mathrm{span}_{\mathbb{C}}\{\omega^1,\overline{\omega}^1,\omega^j+\overline{\omega}^j\},\]
where $j \in \mathrm{Dpt}(J)$ and $j\geq 2$. From the assumption \eqref{omegak} in Remark \ref{convention} and the fact that the type of $d\omega^k$ is $(2,0)$ and $(1,1)$, it follows that
\[ d\omega^k = \sum_{\begin{subarray}{c} 2\leq j\leq k-1 \\ j \in \mathrm{Dpt}(J) \end{subarray}} a_j \omega^1 \wedge (\omega^j + \overline{\omega}^j),\]
where $a_{k-1} \neq 0$ due to the fact that $d\omega^k$ needs to maintain the type of \eqref{streq_nil_cplx_mcn}. Therefore, it forces that $k-1 \in \mathrm{Dpt}(J)$ and $k \notin \mathrm{Dpt}(J)$, since the type of $d\omega^k$ is not $(1,1)$ and thus $\omega^k$ can not satisfy the statement \eqref{dpt} in Theorem \ref{equ_mcn_refined}. If $\omega^{k+1} \in V^2_{\mathbb{C}}$, then we would have $k \in \mathrm{Dpt}(J)$, a contradiction. So we must have $\omega^{k+1} \not\in V^2_{\mathbb{C}}$.

Conversely, assume that \[d\omega^k = \sum_{\begin{subarray}{c} 2\leq j\leq k-1 \\ j \in \mathrm{Dpt}(J) \end{subarray}} a_j \omega^1 \wedge (\omega^j + \overline{\omega}^j),\] where $a_{k-1}\neq 0$. Then it is obvious that $\omega^2 \in  V^2_{\mathbb{C}}$ by the definition of $V^2$. So the proof is completed.
\end{proof}

\begin{lemma}\label{V2todpt}
Let $(\mathfrak{g},J)$ be MaxN and $\{\omega^k\}_{k=1}^n$ is an admissible coframe. If for some $k\geq 2$, there are constants  $a_i$, $b_i$ ($1\leq i\leq k-1$) and $a_k\in \mathbb{C}$, such that  $a_k \neq 0$ and
\[a_k \omega^k + \sum_{i=1}^{k-1}(a_i \omega^i +b_i \overline{\omega}^i) \in V^2_{\mathbb{C}},\]
then it holds that $k-1 \in \mathrm{Dpt}(J)$.
\end{lemma}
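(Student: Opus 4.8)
The plan is to track the terms in $d\alpha$ that involve the index $k-1$, where $\alpha = a_k\omega^k + \sum_{i=1}^{k-1}(a_i\omega^i + b_i\overline{\omega}^i)$, and to show that the presence of such terms in $\bigwedge^2 V^1_{\mathbb{C}}$ forces $k-1 \in \mathrm{Dpt}(J)$. Since $\alpha \in V^2_{\mathbb{C}}$ means precisely that $d\alpha \in \bigwedge^2 V^1_{\mathbb{C}}$, the whole argument reduces to comparing two descriptions of $d\alpha$: one coming from the structure equation \eqref{streq_nil_cplx_mcn}, the other from the explicit basis of $V^1_{\mathbb{C}}$ provided by Corollary \ref{basisV1}. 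I would dispose of the trivial case $k=2$ at once, since then $k-1 = 1 \in \mathrm{Dpt}(J)$ always; so assume $k \geq 3$, hence $k-1 \geq 2$.

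First I would isolate the index-$(k-1)$ part of $d\alpha$. Differentiating,
\[d\alpha = a_k\, d\omega^k + \sum_{i=1}^{k-1}\left(a_i\, d\omega^i + b_i\, d\overline{\omega}^i\right).\]
Because the coframe is admissible, every $d\omega^i$ with $i \leq k-1$ is, by \eqref{streq_nil_cplx_mcn}, a combination of wedge products of $\{\omega^p, \overline{\omega}^p\}$ with $p < i \leq k-1$, and hence involves neither $\omega^{k-1}$ nor $\overline{\omega}^{k-1}$; the same holds for the conjugates $d\overline{\omega}^i$. Thus the entire sum $\sum_{i \leq k-1}$ contributes nothing involving the index $k-1$, so the index-$(k-1)$ part of $d\alpha$ equals $a_k$ times that of $d\omega^k$. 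The latter is exactly the collection of terms carrying the coefficients $\{A^k_{i,k-1}\}_{i<k-1}$, $\{B^k_{k-1,j}\}_{j=1}^{k-1}$ and $\{B^k_{i,k-1}\}_{i<k-1}$, which by the MaxN condition in Corollary \ref{equ_mcn} are not all zero. Since $a_k \neq 0$, I conclude that $d\alpha$ has a nonzero component involving $\omega^{k-1}$ or $\overline{\omega}^{k-1}$.

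Next I would confront this with the membership $d\alpha \in \bigwedge^2 V^1_{\mathbb{C}}$. By Corollary \ref{basisV1}, the space $V^1_{\mathbb{C}}$ has the basis $\{\omega^1, \overline{\omega}^1\} \cup \{\omega^j + \overline{\omega}^j : j \in \mathrm{Dpt}(J),\, j \geq 2\}$, so every element of $\bigwedge^2 V^1_{\mathbb{C}}$ is a linear combination of wedges of these basis covectors. Expanding, the only basis covector in which $\omega^{k-1}$ or $\overline{\omega}^{k-1}$ can appear (recall $k-1 \geq 2$) is $\omega^{k-1} + \overline{\omega}^{k-1}$, and this belongs to the basis if and only if $k-1 \in \mathrm{Dpt}(J)$. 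Consequently, if $k-1 \notin \mathrm{Dpt}(J)$, then no element of $\bigwedge^2 V^1_{\mathbb{C}}$ involves the index $k-1$, contradicting the previous paragraph. Therefore $k-1 \in \mathrm{Dpt}(J)$, as claimed.

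The argument is in essence a bookkeeping of which indices can occur; the only point requiring genuine care — and what I regard as the crux — is the clean separation showing that the lower differentials $d\omega^i, d\overline{\omega}^i$ (for $i \leq k-1$) cannot produce index-$(k-1)$ terms, so that the nonvanishing guaranteed by the MaxN coefficient condition survives undisturbed into $d\alpha$. Once that isolation is in place, the explicit basis from Corollary \ref{basisV1} delivers the conclusion automatically.
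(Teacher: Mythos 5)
Your proof is correct and follows essentially the same route as the paper's: differentiate, observe via the structure equation \eqref{streq_nil_cplx_mcn} that the index-$(k-1)$ terms of $d\alpha$ come only from $a_k\,d\omega^k$ and are nonzero by the MaxN coefficient condition, and then note that expanding $d\alpha$ over the basis of $V^1_{\mathbb{C}}$ from Corollary \ref{basisV1} forces every index $\geq 2$ appearing in $d\alpha$ to lie in $\mathrm{Dpt}(J)$. Your explicit treatment of the small cases and of the isolation step only makes more precise what the paper leaves implicit.
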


\begin{proof}
Without loss of generality, we may assume that $k \geq 4$, since the other cases are trivial.
It is clear that, by the definition of $V^2$ and Corollary \ref{basisV1},
\[\begin{aligned}
& a_k d\omega^k + \sum_{i=1}^{k-1}(a_i d\omega^i +b_i d\overline{\omega}^i)\\
=\quad&\sum_{\begin{subarray}{c} 2 \leq p < q\\ p,q \in \mathrm{Dpt}(J) \end{subarray}}c_{pq} (\omega^p + \overline{\omega}^p) \wedge (\omega^q + \overline{\omega}^q)
 + \sum_{\begin{subarray}{c}  q \geq 2\\ q \in \mathrm{Dpt}(J) \end{subarray}}c_{1q}\omega^1 \wedge (\omega^q + \overline{\omega}^q) \ + \\
& +  \sum_{\begin{subarray}{c} q \geq 2 \\ q \in \mathrm{Dpt}(J) \end{subarray}}c_{q1} (\omega^q + \overline{\omega}^q) \wedge \overline{\omega}^1  \ \, + \ \,c_{11} \omega^1 \wedge \overline{\omega}^1,
\end{aligned}\]
for $c_{pq} \in \mathbb{C}$. Since $a_{k} \neq 0$, some terms containing $\omega^{k-1}$ or $\overline{\omega}^{k-1}$ will appear in the left hand side expression. On the other hand, all the indices of $\omega$ and $\overline{\omega}$ on the right hand side of the above equality belong to $\mathrm{Dpt}(J)$. This shows that $k-1 \in \mathrm{Dpt}(J)$.
\end{proof}

Another technical lemma is the following.

\begin{lemma}\label{d^2=0}
Let $\{\omega^k\}_{k=1}$ be a basis of $\mathfrak{g}^{1,0}$ satisfying the structure equation \eqref{streq_nil_cplx_mcn}. Then it holds that
\[\begin{cases}
B^k_{k-1,k-1}=0,\quad 3\leq k \leq n,\\
B^k_{i,k-1}=0,\quad 2 \leq i \leq k-2.
\end{cases}\]
Also,
\[A^k_{i,k-1}=0\quad \text{for}\quad 2 \leq i \leq k-2 \quad\Longleftrightarrow\quad B^k_{k-1,i}=0\quad \text{for}\quad 2 \leq i \leq k-2.\]
Furthermore, the condition $d\omega^k+d\overline{\omega}^k=0$ will force that the following equalities hold
\[\begin{cases}
A^k_{ij}=0,\quad i<j\leq k-1,\\
B^k_{k-1,i}=0,\quad 2 \leq i \leq k-2,\\
B^k_{1,k-1}=\overline{B^{k}_{k-1,1}} \neq 0,\\
B^k_{ij}=\overline{B^k_{ji}},\quad i,j \leq k-2.
\end{cases}\]
\end{lemma}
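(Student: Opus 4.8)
The plan is to extract everything from the single relation $d^2\omega^k=0$, reading off the coefficients of carefully chosen $3$-forms and invoking the $\mathbb{C}$-linear independence of $\{d\omega^i\}_{i=2}^n$ (Remark \ref{d-indept}) together with its conjugate $\{d\overline{\omega}^j\}_{j=2}^n$. The structural fact used throughout is that each $d\omega^m$ and each $d\overline{\omega}^m$ involves only the $1$-forms $\omega^p,\overline{\omega}^p$ with $p\leq m-1$; hence, when one expands
\[d^2\omega^k = \sum_{i<j<k}A_{ij}^k\bigl(d\omega^i \wedge \omega^j - \omega^i \wedge d\omega^j\bigr) + \sum_{i,j<k}B_{ij}^k\bigl(d\omega^i\wedge\overline{\omega}^j - \omega^i \wedge d\overline{\omega}^j\bigr),\]
the forms $\overline{\omega}^{k-1}$ and $\omega^{k-1}$ can occur only as bare wedge factors, never hidden inside any $d\omega^i$, $d\omega^j$ or $d\overline{\omega}^j$ above.

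For the first block, I would collect in $d^2\omega^k$ exactly the summands containing $\overline{\omega}^{k-1}$. By the structural fact these are precisely $\bigl(\sum_{i\leq k-1} B^k_{i,k-1}\,d\omega^i\bigr) \wedge \overline{\omega}^{k-1}$, and the $2$-form in parentheses is free of $\overline{\omega}^{k-1}$. Since wedging with $\overline{\omega}^{k-1}$ is injective on forms free of $\overline{\omega}^{k-1}$, the vanishing $d^2\omega^k=0$ forces $\sum_{i\leq k-1}B^k_{i,k-1}\,d\omega^i=0$; recalling $d\omega^1=0$ and applying Remark \ref{d-indept} gives $B^k_{i,k-1}=0$ for all $2\leq i\leq k-1$, which is exactly $B^k_{k-1,k-1}=0$ together with $B^k_{i,k-1}=0$ for $2\leq i\leq k-2$.

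The second identity is proved by the same device applied to the $\omega^{k-1}$-components: collecting the summands carrying $\omega^{k-1}$ as a bare factor gives $\bigl(\sum_{i<k-1}A^k_{i,k-1}\,d\omega^i - \sum_{j\leq k-1}B^k_{k-1,j}\,d\overline{\omega}^j\bigr)\wedge\omega^{k-1}=0$, hence, after using $B^k_{k-1,k-1}=0$ from the first block and $d\omega^1=d\overline{\omega}^1=0$,
\[\sum_{i=2}^{k-2}A^k_{i,k-1}\,d\omega^i = \sum_{j=2}^{k-2}B^k_{k-1,j}\,d\overline{\omega}^j.\]
The left side lies in $\mathrm{span}_{\mathbb{C}}\{d\omega^i\}$ and the right in $\mathrm{span}_{\mathbb{C}}\{d\overline{\omega}^j\}$; if all $A^k_{i,k-1}$ vanish then the right side is $0$, so its coefficients vanish by the conjugate of Remark \ref{d-indept}, and symmetrically, which yields the asserted equivalence.

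For the final block, assume $d\omega^k+d\overline{\omega}^k=0$. Writing $d\overline{\omega}^k=\overline{d\omega^k}$ and sorting by bidegree, the $(2,0)$-part forces $A^k_{ij}=0$ for all $i<j\leq k-1$, and the $(1,1)$-part forces $B^k_{ij}=\overline{B^k_{ji}}$ for all $i,j\leq k-1$; in particular $B^k_{ij}=\overline{B^k_{ji}}$ for $i,j\leq k-2$. Combining with the first block gives $B^k_{k-1,i}=\overline{B^k_{i,k-1}}=0$ for $2\leq i\leq k-2$ and $B^k_{1,k-1}=\overline{B^k_{k-1,1}}$. To see $B^k_{1,k-1}\neq0$, note that all the coefficients just forced to vanish mean that the only possibly nonzero members of $\{A^k_{i,k-1}\}_{i<k-1}\cup\{B^k_{k-1,j}\}_{j=1}^{k-1}\cup\{B^k_{i,k-1}\}_{i<k-1}$ are $B^k_{1,k-1}$ and $B^k_{k-1,1}$; since the MaxN requirement in Corollary \ref{equ_mcn} forbids this whole collection from vanishing, and $B^k_{1,k-1}=\overline{B^k_{k-1,1}}$, we conclude $B^k_{1,k-1}\neq0$. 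I expect the only real obstacle to be organizational, namely the clean verification that $\overline{\omega}^{k-1}$ and $\omega^{k-1}$ never appear inside the differentials in the expansion of $d^2\omega^k$, so that coefficient extraction is unambiguous; everything afterward is linear algebra driven by Remark \ref{d-indept}.
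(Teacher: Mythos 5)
Your proof is correct; every step checks out. The structural fact you flag as the ``only real obstacle'' --- that $\omega^{k-1}$ and $\overline{\omega}^{k-1}$ occur in the expansion of $d^2\omega^k$ only as bare wedge factors, never inside a $d\omega^i$ or $d\overline{\omega}^j$ with $i,j\leq k-1$ --- is immediate from \eqref{streq_nil_cplx_mcn}, and once it is in place the three blocks follow exactly as you describe. The route is recognizably the same as the paper's, in that both arguments live entirely inside the single relation $d^2\omega^k=0$, but your organization is genuinely different. The paper never factors out the full coefficient $2$-form of $\overline{\omega}^{k-1}$; instead, for each individual coefficient $B^k_{i,k-1}$ it exhibits a specific witness monomial inside $d(\omega^i\wedge\overline{\omega}^{k-1})$ (one divisible by $\overline{\omega}^{k-1}$ and by $\omega^{i-1}$ or $\overline{\omega}^{i-1}$) that no other term of \eqref{sum_of_domega} can produce, and kills the coefficients one at a time in a descent $i=k-1,k-2,\dots,2$; the same descent is then repeated from scratch to prove the equivalence between the vanishing of the $A^k_{i,k-1}$ and of the $B^k_{k-1,i}$. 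You package all of those witness-monomial comparisons into a single invocation of Remark \ref{d-indept} (and its conjugate), after writing the $\overline{\omega}^{k-1}$-component as $\bigl(\sum_{i}B^k_{i,k-1}\,d\omega^i\bigr)\wedge\overline{\omega}^{k-1}$ and the $\omega^{k-1}$-component as $\bigl(\sum_{i}A^k_{i,k-1}\,d\omega^i-\sum_{j}B^k_{k-1,j}\,d\overline{\omega}^j\bigr)\wedge\omega^{k-1}$; as a bonus this yields the unconditional identity $\sum_{i=2}^{k-2}A^k_{i,k-1}\,d\omega^i=\sum_{j=2}^{k-2}B^k_{k-1,j}\,d\overline{\omega}^j$, from which the stated equivalence is immediate in both directions. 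The final block (bidegree decomposition forcing $A^k_{ij}=0$ and $B^k_{ij}=\overline{B^k_{ji}}$, then non-vanishing of $B^k_{1,k-1}$ from the not-all-zero condition in Corollary \ref{equ_mcn}) coincides with the paper's. What your version buys is brevity and a reusable mechanism; what the paper's buys is that it never needs to invoke the independence of $\{d\omega^i\}_{i\geq 2}$ as a separate input, since each witness monomial certifies it on the spot.
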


\begin{proof}
It is easy to see that $d\omega^k$ can be written as
\begin{equation}\label{sum_of_domega}
\begin{aligned}
d\omega^k &= B^k_{k-1,k-1} \omega^{k-1} \wedge \overline{\omega}^{k-1} \\
          &+ \sum_{2 \leq i \leq k-2} A_{i,k-1}^k \omega^i \wedge \omega^{k-1} + B^k_{k-1,i} \omega^{k-1} \wedge \overline{\omega}^i
           + B^k_{i,k-1} \omega^i \wedge \overline{\omega}^{k-1}\\
          &+ A^k_{1,k-1}\omega^1 \wedge \omega^{k-1} + B^k_{k-1,1} \omega^{k-1} \wedge \overline{\omega}^1 + B^k_{1,k-1} \omega^1 \wedge \overline{\omega}^{k-1} \\
          &+ \sum_{i<j\leq k-2}A^k_{ij}\omega^i \wedge \omega^j + \sum_{i,j\leq k-2}B^k_{ij}\omega^i \wedge \overline{\omega}^j,
\end{aligned}
\end{equation}
where the coefficients $\{A_{i,k-1}^k\}_{i<k-1}$, $\{B_{k-1,j}^k\}_{j=1}^{k-1}$ and $\{B_{i,k-1}^k\}_{i<k-1}$ don't all vanish.
The equality $d^2=0$ forces that $B^k_{k-1,k-1}=0$ when $3\leq k \leq n$, since $d(\omega^{k-1} \wedge \overline{\omega}^{k-1})$  contains at least two summands, one of which is some nonzero multiple of $\omega^{k-2} \wedge \overline{\omega}^{k-2} \wedge \overline{\omega}^{k-1}$, or $\omega^i \wedge \omega^{k-2} \wedge \overline{\omega}^{k-1}$ or $\omega^i \wedge \overline{\omega}^{k-2}\wedge \overline{\omega}^{k-1}$ for some $ i \leq k-3$, and the other one is some nonzero multiple of $\omega^{k-1} \wedge \omega^{k-2} \wedge \overline{\omega}^{k-2}$, or $\omega^{k-1}\wedge \overline{\omega}^i \wedge \overline{\omega}^{k-2}$ or $\omega^{k-1} \wedge \overline{\omega}^i \wedge \omega^{k-2}$ for some $ i \leq k-3$, while the $d$-operation of other terms in \eqref{sum_of_domega}, such as
\begin{gather*}
\{d(\omega^i \wedge \omega^{k-1})\}_{2 \leq i \leq k-2},\ \{d(\omega^{k-1} \wedge \overline{\omega}^i)\}_{2 \leq i \leq k-2},\
\{d(\omega^i \wedge \overline{\omega}^{k-1})\}_{2 \leq i \leq k-2},\\
d(\omega^1 \wedge \omega^{k-1}),\ d(\omega^{k-1} \wedge \overline{\omega}^1),\ d(\omega^1 \wedge \overline{\omega}^{k-1}),\\
\{d(\omega^i \wedge \omega^j)\}_{i<j\leq k-2},\ \{d(\omega^i \wedge \overline{\omega}^j)\}_{i,j\leq k-2},
\end{gather*}
have neither of the above two types of summands above by comparison. Here the condition that the coefficients $\{A_{i,k-2}^{k-1}\}_{i<k-2}$, $\{B_{k-2,j}^{k-1}\}_{j=1}^{k-2}$ and $\{B_{i,k-2}^{k-1}\}_{i<k-2}$ of $d\omega^{k-1}$ don't all vanish is used.

Then we note that $d(\omega^{k-2} \wedge \overline{\omega}^{k-1})$  contains at least one summand, which is some nonzero multiple of $\omega^{k-3} \wedge \overline{\omega}^{k-3} \wedge \overline{\omega}^{k-1}$, or $\omega^i \wedge \omega^{k-3} \wedge \overline{\omega}^{k-1}$ or $\omega^i \wedge \overline{\omega}^{k-3} \wedge \overline{\omega}^{k-1}$ for some $i \leq k-4$, while the $d$-operation of other terms in \eqref{sum_of_domega}, such as \begin{gather*}
\{d(\omega^i \wedge \omega^{k-1})\}_{2 \leq i \leq k-2},\ \{d(\omega^{k-1} \wedge \overline{\omega}^i)\}_{2 \leq i \leq k-2},\
\{d(\omega^i \wedge \overline{\omega}^{k-1})\}_{2 \leq i \leq k-3},\\
d(\omega^1 \wedge \omega^{k-1}),\ d(\omega^{k-1} \wedge \overline{\omega}^1),\ d(\omega^1 \wedge \overline{\omega}^{k-1}),\\
\{d(\omega^i \wedge \omega^j)\}_{i<j\leq k-2},\ \{d(\omega^i \wedge \overline{\omega}^j)\}_{i,j\leq k-2},
\end{gather*}
can't have such a summand by the type comparison, where the structure equation \eqref{streq_nil_cplx_mcn} is used. It implies that $B^{k}_{k-2,k-1}=0$. Similarly, it can also be shown that \[B^{k}_{k-3,k-1}=B^{k}_{k-4,k-1}=\cdots=B^{k}_{2,k-1}=0.\]
Therefore, $B^k_{i,k-1}=0$ for $2 \leq i \leq k-2$.

If $A^k_{i,k-1}=0$ for $2 \leq i \leq k-2$, then $d\omega^k$ is expressed as
\begin{equation}\label{sum_of_domega_A=0}
\begin{aligned}
d\omega^k &= \sum_{2 \leq i \leq k-2} B^k_{k-1,i} \omega^{k-1} \wedge \overline{\omega}^i \\
&+ A^k_{1,k-1}\omega^1 \wedge \omega^{k-1} + B^k_{k-1,1} \omega^{k-1} \wedge \overline{\omega}^1 + B^k_{1,k-1} \omega^1 \wedge \overline{\omega}^{k-1} \\
&+\sum_{i<j\leq k-2}A^k_{ij}\omega^i \wedge \omega^j+ \sum_{i,j\leq k-2}B^k_{ij}\omega^i \wedge \overline{\omega}^j.\\
\end{aligned}
\end{equation}
The equality $d^2=0$ implies $B^k_{k-1,k-2}=0$, since $d(\omega^{k-1} \wedge \overline{\omega}^{k-2})$ contains a summand which is some nonzero multiple of $\omega^{k-1} \wedge \omega^{k-3} \wedge \overline{\omega}^{k-3}$, or $\omega^{k-1} \wedge \overline{\omega}^i \wedge \overline{\omega}^{k-3}$ or $\omega^{k-1} \wedge \overline{\omega}^i \wedge \omega^{k-3}$ for some $i \leq k-4$, while the $d$-operation of other terms in \eqref{sum_of_domega_A=0}, such as
\begin{gather*}
\{d(\omega^{k-1} \wedge \overline{\omega}^i)\}_{2 \leq i \leq k-3},\\
d(\omega^{k-1} \wedge \omega^1),\ d(\omega^{k-1} \wedge \overline{\omega}^1),\ d(\omega^1 \wedge \overline{\omega}^{k-1}),\\
\{d(\omega^i \wedge \omega^j)\}_{i<j\leq k-2},\ \{d(\omega^i \wedge \overline{\omega}^j)\}_{i,j\leq k-2},
\end{gather*}
can't have such a summand due to the structure equation \eqref{streq_nil_cplx_mcn}. Similarly, it can be shown that
 \[B^{k}_{k-1,k-3}=B^{k}_{k-1,k-4}=\cdots=B^{k}_{k-1,2}=0.\]
Conversely if $B^k_{k-1,i}=0$ for $2 \leq i \leq k-2$, it can also be shown that $A^k_{i,k-i}=0$ for $2 \leq i \leq k-2$ as above.

It is clear that the condition $d\omega^k + d\overline{\omega}^k =0$ implies that the type of $d\omega^k$ is necessarily $(1,1)$ and thus $A_{ij}^k=0$ for $i<j \leq k-1$. Then $d\omega^k$ is expressed as
\begin{equation}\label{sum_of_domega_dept}
d\omega^k = \sum_{2 \leq i \leq k-2} B^k_{k-1,i} \omega^{k-1} \wedge \overline{\omega}^i + B^k_{k-1,1} \omega^{k-1} \wedge \overline{\omega}^1 + B^k_{1,k-1} \omega^1 \wedge \overline{\omega}^{k-1} + \sum_{i,j\leq k-2}B^k_{ij}\omega^i \wedge \overline{\omega}^j.\\
\end{equation}
Hence, the vanishing of $ B^k_{k-1,i} $ for $2 \leq i \leq k-2$ results from the equalities $A^k_{i,k-1}=0$ for $2 \leq i \leq k-2$ as shown above, and the equalities $B^k_{1,k-1}=\overline{B^{k}_{k-1,1}}$ and $B^k_{ij}=\overline{B^k_{ji}}$ for $i,j \leq k-2$ follow easily from $d\omega^k+d\overline{\omega}^k=0$. Here $B^k_{1,k-1}=\overline{B^{k}_{k-1,1}} \neq 0$ due to the fact that the coefficients of $d\omega^k$, concerning with $\omega^{k-1}$ or $\overline{\omega}^{k-1}$, don't all vanish, by the structure equation \eqref{streq_nil_cplx_mcn}.
\end{proof}

The following trivial lemma is needed for further investigation.

\begin{lemma}\label{basis_change}
Let $(\mathfrak{g},J)$ be MaxN and $\{\omega^i\}_{i=1}^n$ is an admissible coframe. Assume that $k \notin \mathrm{Dpt}(J)$ for some $k \geq 3$. Then, if a new coframe $\{\tilde{\omega}^i\}_{i=1}^n$ of $\mathfrak{g}^{1,0}$ is constructed by
\[\begin{cases}
\tilde{\omega}^i=\omega^i, \quad 1 \leq i \leq n, i \neq k, \\
\tilde{\omega}^k =c_k \omega^k + \sum_{i=1}^{k-1} c_i \omega^i,
\end{cases}\]
where $c_i \in \mathbb{C}$ and $c_k \neq 0$, it is still an admissible coframe.
\end{lemma}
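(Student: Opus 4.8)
The plan is to verify directly the three requirements that make $\{\tilde{\omega}^i\}_{i=1}^n$ an admissible coframe in the sense of Theorem \ref{equ_mcn_refined} and Definition \ref{DPT}: that it is a coframe of $\mathfrak{g}^{1,0}$, that $\{d\tilde{\omega}^i\}_{i=1}^n$ obeys the structure equation \eqref{streq_nil_cplx_mcn}, and that each $\tilde{\omega}^i$ satisfies either \eqref{dpt} or \eqref{indpt}. First I would note that the transition matrix from $\{\omega^i\}$ to $\{\tilde{\omega}^i\}$ is lower triangular with all diagonal entries equal to $1$ except the $k$-th, which is $c_k \neq 0$; hence it is invertible and $\{\tilde{\omega}^i\}$ is a coframe. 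Moreover $\omega^i = \tilde{\omega}^i$ for $i \neq k$, while $\omega^k = \frac{1}{c_k}\big(\tilde{\omega}^k - \sum_{i<k} c_i \tilde{\omega}^i\big)$ recovers the old coframe from the new one, both expansions involving only indices $\leq k$; this fact is used repeatedly below.

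For the structure equation I would observe that $d\tilde{\omega}^i = d\omega^i$ for $i \neq k$, while $d\tilde{\omega}^k = c_k\, d\omega^k + \sum_{i<k} c_i\, d\omega^i$. When $i \leq k$ every term on the right already involves only $\omega^p,\overline{\omega}^q$ with $p,q < i$, which coincide with $\tilde{\omega}^p,\overline{\tilde{\omega}}^q$, so \eqref{streq_nil_cplx_mcn} holds verbatim. For $i > k$ I rewrite $d\tilde{\omega}^i = d\omega^i$ by substituting $\omega^k = \frac{1}{c_k}(\tilde{\omega}^k - \sum_{j<k} c_j \tilde{\omega}^j)$ for each occurrence of $\omega^k$; since this replaces a $(1,0)$-form of index $k < i$ by a combination of $(1,0)$-forms of indices $\leq k < i$, the result is again of the form \eqref{streq_nil_cplx_mcn} with all indices below $i$ and the bidegrees $(2,0)$ and $(1,1)$ preserved. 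The same substitution shows the leading (index $k-1$ or $i-1$) coefficients remain nonzero because $c_k \neq 0$, so the non-vanishing condition of Corollary \ref{equ_mcn} survives as well.

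It remains to verify the dichotomy \eqref{dpt}/\eqref{indpt} for each $\tilde{\omega}^i$, which I split according to the position of $i$ relative to $k$. For $i < k$ nothing changes: $\tilde{\omega}^j = \omega^j$ for all $j \leq i$, so both $d\tilde{\omega}^i$ and the span $\mathrm{span}_{\mathbb{C}}\{d\tilde{\omega}^{i-1},d\overline{\tilde{\omega}}^{i-1},\cdots\}$ agree with the originals, and $\tilde{\omega}^i$ inherits the condition of $\omega^i$. For $i > k$, the fact that $c_k \neq 0$ forces $\mathrm{span}_{\mathbb{C}}\{d\tilde{\omega}^{j},d\overline{\tilde{\omega}}^{j} : j \leq i-1\} = \mathrm{span}_{\mathbb{C}}\{d\omega^{j},d\overline{\omega}^{j} : j \leq i-1\}$, since the only altered generator $d\tilde{\omega}^k$ differs from $c_k\,d\omega^k$ by lower terms already in the span and $c_k \neq 0$ lets one solve back for $d\omega^k$; as $d\tilde{\omega}^i = d\omega^i$ too, the conditions for $\tilde{\omega}^i$ are literally those for $\omega^i$.

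The genuinely substantive case, and the one where the hypothesis $k \notin \mathrm{Dpt}(J)$ is indispensable, is $i = k$, which I expect to be the main (indeed only) real point. Since $k \notin \mathrm{Dpt}(J)$, the coframe being admissible means $\omega^k$ satisfies \eqref{indpt}. Writing $a\,d\tilde{\omega}^k + b\,d\overline{\tilde{\omega}}^k = a c_k\, d\omega^k + b\overline{c_k}\, d\overline{\omega}^k + \big(\sum_{i<k} a c_i\, d\omega^i + b\overline{c_i}\, d\overline{\omega}^i\big)$, the parenthesised part lies in $\mathrm{span}_{\mathbb{C}}\{d\omega^{k-1},d\overline{\omega}^{k-1},\cdots,d\omega^1,d\overline{\omega}^1\}$, which equals the corresponding span for $\{\tilde{\omega}^j\}$; hence membership reduces to $a c_k\, d\omega^k + b\overline{c_k}\, d\overline{\omega}^k$ lying in that span, and \eqref{indpt} for $\omega^k$ then forces $a c_k = b\overline{c_k} = 0$, i.e. $a=b=0$ since $c_k \neq 0$. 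Thus $\tilde{\omega}^k$ satisfies \eqref{indpt}. Had $k$ instead belonged to $\mathrm{Dpt}(J)$, the relation $d\omega^k = -d\overline{\omega}^k$ would make $\overline{c_k}\,d\tilde{\omega}^k + c_k\,d\overline{\tilde{\omega}}^k$ fall into the lower span with $(a,b) = (\overline{c_k},c_k) \neq (0,0)$ while \eqref{dpt} generically fails, so $\tilde{\omega}^k$ would satisfy neither condition; this is precisely why $k \notin \mathrm{Dpt}(J)$ is assumed, and it pinpoints the only place the hypothesis is consumed.
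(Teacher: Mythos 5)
Your proof is correct and follows essentially the same route as the paper: the paper likewise dismisses the preservation of the structure equation and of the conditions for $i\neq k$ as routine, and concentrates on the single substantive point, namely expanding $a\,d\tilde{\omega}^k+b\,d\overline{\tilde{\omega}}^k=ac_k\,d\omega^k+b\overline{c_k}\,d\overline{\omega}^k+(\text{lower terms in the span})$ and invoking condition \eqref{indpt} for $\omega^k$ together with $c_k\neq 0$ to force $a=b=0$. Your closing observation about why the hypothesis $k\notin\mathrm{Dpt}(J)$ cannot be dropped is a correct and welcome addition, though not part of the paper's argument.
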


\begin{proof}
It is easy to check that $\{d\tilde{\omega}^i\}_{i=1}^n$ still maintains the type of \eqref{streq_nil_cplx_mcn}
and $\{\tilde{\omega}^i\}_{ 1 \leq i \leq n,\,i \neq k}$ satisfies \eqref{dpt} or \eqref{indpt} in Theorem \ref{equ_mcn_refined}. So it suffices to show that for $a,b\in\mathbb{C}$ the following holds
\begin{equation}\label{tomega}
a \,d\tilde{\omega}^k + b\,d\overline{\tilde{\omega}}^k \in \mathrm{span}_{\mathbb{C}}\{d\tilde{\omega}^{k-1}\!,\,d\overline{\tilde{\omega}}^{k-1}\!,\,\cdots\!,\, d\tilde{\omega}^2\!,\,d\overline{\tilde{\omega}}^2\!,\,d\tilde{\omega}^1\!,\,d\overline{\tilde{\omega}}^1\} \ \Longleftrightarrow \ a=b=0.
\end{equation}
Since $k \notin \mathrm{Dpt}(J)$,  $\omega^k$ satisfies \eqref{indpt} in Theorem \ref{equ_mcn_refined}, and
\[a \,d\tilde{\omega}^k + b\,d\overline{\tilde{\omega}}^k = a c_k d\omega^k + b \overline{c_k} d \overline{\omega}^k
+ \sum_{i=1}^{k-1} (ac_i d\omega^i + b \overline{c_i} d\overline{\omega}^i),\]
where $c_k \neq 0$, the conclusion \eqref{tomega} above follows obviously so $\tilde{\omega}^k$ still satisfies \eqref{indpt} of Theorem \ref{equ_mcn_refined}.
\end{proof}

\subsection{The structure theorem of  MaxN $(\mathfrak{g},J)$ with $\nu(\mathfrak{g})=3$}

Now the condition $\nu(\mathfrak{g})=3$ comes to play in the study of the structure for MaxN complex structures.

\begin{proposition}\label{inpdtnext}
Let $(\mathfrak{g},J)$ be MaxN where $\nu(\mathfrak{g})=3$ and $\{\omega^i\}_{i=1}^n$ is an admissible coframe. Assume that $k \notin \mathrm{Dpt}(J)$ for some $3\leq k \leq n-2$. Then we have
 \begin{enumerate}
\item\label{fst511} $\omega^{k+1} \in V^3_{\mathbb{C}} \setminus V^2_{\mathbb{C}}$,
\item\label{scd511} the expression $d\omega^{k+2}$ is
\[\begin{aligned}
d\omega^{k+2} &= A^{k+2}_{1,k+1}\omega^1 \wedge \omega^{k+1} + B^{k+2}_{1,k+1} \omega^1 \wedge \overline{\omega}^{k+1} \\
              & + \sum_{i<j\leq k} A_{ij}^{k+2} \omega^i \wedge \omega^j + \sum_{i,j\leq k} B^{k+2}_{ij} \omega^i \wedge \overline{\omega}^j, \\
\end{aligned}\]
where $|A^{k+2}_{1,k+1}|=|B^{k+2}_{1,k+1}|\neq 0$, and $\omega^{k+1}$ satisfies the following:
\[ A_{1,k+1}^{k+2} \omega^{k+1} + B_{1,k+1}^{k+2} \overline{\omega}^{k+1} + \sum_{1<j\leq k}A_{1j}^{k+2} \omega^j + \sum_{j \leq k}B^{k+2}_{1j} \overline{\omega}^j \in V^2_{\mathbb{C}}, \]
\item\label{thd511} a new admissible coframe $\{\tilde{\omega}^i\}_{i=1}^n$ can be constructed such that
    \[ \tilde{\omega}^i =\omega^i \quad \text{for}\quad i \neq k \quad \text{and} \quad \tilde{\omega}^k \in  V^2_{\mathbb{C}}.\]
\end{enumerate}
\end{proposition}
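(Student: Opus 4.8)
The plan is to exploit the single consequence of $\nu(\mathfrak g)=3$ that $V^3_{\mathbb C}=\mathfrak g^*_{\mathbb C}$, so that $d\omega^j\in\bigwedge^2V^2_{\mathbb C}$ for every $j$; by Lemma \ref{wcrt}(2) this is the same as saying $\iota_\theta d\omega^j\in V^2_{\mathbb C}$ for all $\theta\in\mathfrak g_{\mathbb C}$, and all three assertions will be read off from such contractions. Throughout I write $X_i,\overline{X}_i$ for the dual frame of $\omega^i,\overline{\omega}^i$. Claim \eqref{fst511} is immediate: $\omega^{k+1}\in V^3_{\mathbb C}$ holds automatically, while $\omega^{k+1}\in V^2_{\mathbb C}$ would force $k\in\mathrm{Dpt}(J)$ by Theorem \ref{elementsV2}, contradicting the hypothesis; hence $\omega^{k+1}\in V^3_{\mathbb C}\setminus V^2_{\mathbb C}$, and the same holds for $\overline{\omega}^{k+1}$ since $V^2_{\mathbb C}$ is conjugation invariant.

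For the vanishing statements hidden in \eqref{scd511} I would first recall from Lemma \ref{d^2=0} (applied with $k+2$ in place of $k$) that $B^{k+2}_{k+1,k+1}=0$, that $B^{k+2}_{i,k+1}=0$ for $2\le i\le k$, and the equivalence $A^{k+2}_{i,k+1}=0\ (2\le i\le k)\Leftrightarrow B^{k+2}_{k+1,i}=0\ (2\le i\le k)$. To kill $A^{k+2}_{m,k+1}$ for $2\le m\le k$, contract: using $B^{k+2}_{m,k+1}=0$ one finds
\[\iota_{X_m}d\omega^{k+2}=A^{k+2}_{m,k+1}\,\omega^{k+1}+(\text{terms of index}\le k)\in V^2_{\mathbb C},\]
whose only index $k+1$ contribution is $A^{k+2}_{m,k+1}\omega^{k+1}$, with no $\overline{\omega}^{k+1}$ partner. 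This is exactly the shape required by Lemma \ref{V2todpt}, so $A^{k+2}_{m,k+1}\neq0$ would give $k\in\mathrm{Dpt}(J)$, a contradiction; hence $A^{k+2}_{m,k+1}=0$, and then $B^{k+2}_{k+1,m}=0$ for $2\le m\le k$ by the equivalence above. The leftover coefficient $B^{k+2}_{k+1,1}$ is disposed of the same way via $\iota_{\overline{X}_1}d\omega^{k+2}=-B^{k+2}_{k+1,1}\omega^{k+1}+(\text{index}\le k)\in V^2_{\mathbb C}$ and Lemma \ref{V2todpt}. With these vanishings the stated form of $d\omega^{k+2}$ holds, and the displayed membership is precisely $\iota_{X_1}d\omega^{k+2}\in V^2_{\mathbb C}$.

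It remains to prove $|A^{k+2}_{1,k+1}|=|B^{k+2}_{1,k+1}|\neq0$, which I regard as the main obstacle. Non-vanishing is forced by \eqref{streq_nil_cplx_mcn}, whose top coefficients do not all vanish; since everything but $A:=A^{k+2}_{1,k+1}$ and $B:=B^{k+2}_{1,k+1}$ has just been eliminated, at least one of $A,B$ is nonzero. For the equality of moduli set $\xi:=\iota_{X_1}d\omega^{k+2}=A\omega^{k+1}+B\overline{\omega}^{k+1}+\mu$ with $\mu$ of index $\le k$, and use that both $\xi$ and $\overline{\xi}$ lie in $V^2_{\mathbb C}$. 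The combination $\overline{B}\,\xi-A\,\overline{\xi}$ cancels the $\omega^{k+1}$ term and equals $(|B|^2-|A|^2)\overline{\omega}^{k+1}$ plus index $\le k$ terms; were $|A|\neq|B|$, its conjugate would be an element of $V^2_{\mathbb C}$ of the form $\omega^{k+1}+(\text{index}\le k)$, and Lemma \ref{V2todpt} would again yield $k\in\mathrm{Dpt}(J)$, a contradiction. Hence $|A|=|B|$, and both are nonzero.

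Finally, for \eqref{thd511} I would build $\tilde{\omega}^k$ with Lemma \ref{basis_change}, whose admissible modifications are $\tilde{\omega}^k=c_k\omega^k+\sum_{i<k}c_i\omega^i$ with $c_k\neq0$, the other frame vectors being unchanged. The input is the element $\chi:=\overline{B}\,\xi-A\,\overline{\xi}\in V^2_{\mathbb C}$, which by $|A|=|B|$ carries no index $k+1$ and so lies in $\mathrm{span}_{\mathbb C}\{\omega^i,\overline{\omega}^i:i\le k\}$; trading each $\overline{\omega}^j$ with $j\in\mathrm{Dpt}(J)$ for $-\omega^j$ modulo the basis of $V^1_{\mathbb C}$ from Corollary \ref{basisV1} (which keeps the result in $V^2_{\mathbb C}$) and absorbing the $\omega^1,\overline{\omega}^1$ terms should reduce $\chi$ to a pure $\omega$-combination $\tilde{\omega}^k=c_k\omega^k+\sum_{i<k}c_i\omega^i\in V^2_{\mathbb C}$, with Lemma \ref{V2todpt} recording the consistency $k-1\in\mathrm{Dpt}(J)$. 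The delicate point here, and the step I expect to require the most bookkeeping, is to verify that this clearing procedure leaves a nonzero coefficient $c_k$ on $\omega^k$ rather than collapsing to lower order, so that the construction genuinely modifies the frame at level $k$.
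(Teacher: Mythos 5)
Your treatment of parts \eqref{fst511} and \eqref{scd511} is correct and essentially identical to the paper's: part \eqref{fst511} is Theorem \ref{elementsV2} together with $V^3_{\mathbb{C}}=\mathfrak{g}^*_{\mathbb{C}}$, and part \eqref{scd511} is obtained, exactly as in the paper, by contracting $d\omega^{k+2}\in\bigwedge^2V^2_{\mathbb{C}}$ against the dual frame and feeding the resulting elements of $V^2_{\mathbb{C}}$ into Lemma \ref{V2todpt}; your modulus computation with $\overline{B}\,\xi-A\,\overline{\xi}$ is the paper's nonvanishing-determinant argument in a different guise.

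Part \eqref{thd511}, however, has a genuine gap, and it is not the ``bookkeeping'' you flag but a missing input. Your candidate $\chi=\overline{B}\,\xi-A\,\overline{\xi}$ is manufactured entirely from the lower-order coefficients $A^{k+2}_{1j},B^{k+2}_{1j}$ ($j\leq k$) of $d\omega^{k+2}$, and nothing forces these to be nonzero at level $k$: the structure equation \eqref{streq_nil_cplx_mcn} only constrains the level-$(k+1)$ coefficients of $d\omega^{k+2}$. Concretely, in $(\mathfrak{k}^{\mathrm{II}}_n,J^{\mathrm{II}}_n)$ with $k=3$ one has $d\omega^{5}=\omega^1\wedge(\omega^{4}+\overline{\omega}^{4})$, so $\xi=\omega^4+\overline{\omega}^4$ and $\chi=0$ identically; your construction yields no information about $\omega^3$. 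The coefficient of $\omega^k$ has to be extracted from $d\omega^{k+1}$, not from $d\omega^{k+2}$, and this is what the paper does: since $\xi\in V^2_{\mathbb{C}}$, its differential lies in $\bigwedge^2V^1_{\mathbb{C}}$ and (because $k\notin\mathrm{Dpt}(J)$) involves no index-$k$ forms, which forces $d(A\omega^{k+1}+B\overline{\omega}^{k+1})$ to have no $\omega^k$ or $\overline{\omega}^k$ summand; combined with Lemma \ref{d^2=0} this reduces the level-$k$ part of $d\omega^{k+1}$ to $B^{k+1}_{k,1}\omega^k\wedge\overline{\omega}^1+B^{k+1}_{1,k}\omega^1\wedge\overline{\omega}^k$ with $B^{k+1}_{1,k}\neq0$. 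Applying $\nu(\mathfrak{g})=3$ to $\omega^{k+1}$ itself then gives $\iota_{\overline{\theta}_1}d\omega^{k+1}=-B^{k+1}_{1,k}\omega^k-\sum_{i\leq k-1}B^{k+1}_{i1}\omega^i\in V^2_{\mathbb{C}}$, which is already a pure $(1,0)$-combination with nonzero top coefficient, and Lemma \ref{basis_change} finishes. A secondary defect of your clearing procedure: it can only trade $\overline{\omega}^j$ for $-\omega^j$ when $\omega^j+\overline{\omega}^j\in V^1_{\mathbb{C}}$, i.e.\ when $j\in\mathrm{Dpt}(J)$ or $j=1$, so the antiholomorphic terms $\overline{\omega}^j$ with $j\notin\mathrm{Dpt}(J)$ and $3\leq j\leq k$ (in particular $\overline{\omega}^k$ itself) cannot be removed this way, while Lemma \ref{basis_change} only permits $(1,0)$ modifications of the coframe.
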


\begin{proof}
The condition $k \notin \mathrm{Dpt}(J)$ for some $k$ satisfying $3\leq k \leq n-2$ implies that $\omega^{k+1} \in V^3_{\mathbb{C}}\setminus V^2_{\mathbb{C}}$, by Theorem \ref{elementsV2} and the assumption $\nu(\mathfrak{g})=3$. It follows by Lemma \ref{d^2=0} that
\[\begin{aligned}
d\omega^{k+2} &=\sum_{2\leq i \leq k} A^{k+2}_{i,k+1}\omega^i \wedge \omega^{k+1} + B^{k+2}_{k+1,i}\omega^{k+1}\wedge\overline{\omega}^i \\
              &+ A^{k+2}_{1,k+1}\omega^1 \wedge \omega^{k+1} + B^{k+2}_{k+1,1}\omega^{k+1} \wedge \overline{\omega}^1 + B^{k+2}_{1,k+1} \omega^1 \wedge \overline{\omega}^{k+1} \\
              & + \sum_{i<j\leq k} A_{ij}^{k+2} \omega^i \wedge \omega^j + \sum_{i,j\leq k} B^{k+2}_{ij} \omega^i \wedge \overline{\omega}^j. \\
\end{aligned}\]
The condition $\nu(\mathfrak{g})=3$ forces that $\omega^{k+2} \in V^3_{\mathbb{C}}$, which implies, by the very definition of $V^3$, \[d\omega^{k+2} \in  \bigwedge^2 V^2_{\mathbb{C}}.\]
By Lemma \ref{wcrt}, we know that for $\forall \,\theta \in \mathfrak{g}_{\mathbb{C}}$,
\[\iota_{\theta}d\omega^{k+2} \in V^2_{\mathbb{C}}.\]
Denote the dual frame of $\{\omega^i\}_{i=1}^n$  by $\{\theta_i\}_{i=1}^n$. Then for $1 \leq i \leq k$, we have
\[\iota_{\theta_i} d\omega^{k+2} \in V^2_{\mathbb{C}},\quad \iota_{\overline{\theta}_i} d\omega^{k+2} \in V^2_{\mathbb{C}}.\]
It follows that, for $2 \leq i \leq k$,
\[\begin{aligned}
\iota_{\theta_i} d\omega^{k+2} &= A_{i,k+1}^{k+2} \omega^{k+1} + \sum_{i<j\leq k}A_{ij}^{k+2} \omega^j
- \sum_{j <i} A^{k+2}_{ji} \omega^j+ \sum_{j \leq k}B^{k+2}_{ij} \overline{\omega}^j \in V^2_{\mathbb{C}}, \\
\iota_{\overline{\theta}_i} d\omega^{k+2} &= -B_{k+1,i}^{k+2} \omega^{k+1}- \sum_{j \leq k} B^{k+2}_{ji} \omega^j \in V^2_{\mathbb{C}},\\
\iota_{\overline{\theta}_1} d\omega^{k+2} &= -B_{k+1,1}^{k+2} \omega^{k+1}- \sum_{j \leq k} B^{k+2}_{j1} \omega^j \in V^2_{\mathbb{C}}.
\end{aligned}\]
From $k \notin \mathrm{Dpt}(J)$ and Lemma \ref{V2todpt}, it yields that, for $2 \leq i \leq k$,
\[ A_{i,k+1}^{k+2}=0,\quad B_{k+1,i}^{k+2}=0,\quad B_{k+1,1}^{k+2}=0,\]
so it is impossible for both $A_{1,k+1}^{k+2}$ and $B_{1,k+1}^{k+2}$ to be zero, since $d\omega^{k+2}$ satisfies the structure equation \eqref{streq_nil_cplx_mcn} and the coefficients of $d\omega^{k+2}$, concerning with $\omega^{k+1}$ or $\overline{\omega}^{k+1}$, don't all vanish.
Similarly, one has
\[\begin{aligned}
\iota_{\theta_1} d\omega^{k+2} &= A_{1,k+1}^{k+2} \omega^{k+1} + B_{1,k+1}^{k+2} \overline{\omega}^{k+1} + \sum_{1<j\leq k}A_{1j}^{k+2} \omega^j + \sum_{j \leq k}B^{k+2}_{1j} \overline{\omega}^j \in V^2_{\mathbb{C}}. \\
\end{aligned}\]
Note that $|A_{1,k+1}^{k+2}|=|B_{1,k+1}^{k+2}| \neq 0$. Otherwise, it follows that
\[ \begin{aligned}
A_{1,k+1}^{k+2} \omega^{k+1} + B_{1,k+1}^{k+2} \overline{\omega}^{k+1} + \sum_{1<j\leq k}A_{1j}^{k+2} \omega^j + \sum_{j \leq k}B^{k+2}_{1j} \overline{\omega}^j & \in V^2_{\mathbb{C}}, \\
\overline{B_{1,k+1}^{k+2}} \omega^{k+1} + \overline{A_{1,k+1}^{k+2}} \overline{\omega}^{k+1}
+ \sum_{j \leq k}\overline{B^{k+2}_{1j}} \omega^j + \sum_{1<j\leq k} \overline{A_{1j}^{k+2}} \overline{\omega}^j & \in V^2_{\mathbb{C}}.
\end{aligned}\]
Since $\det\begin{pmatrix} A_{1,k+1}^{k+2} & B_{1,k+1}^{k+2} \\[5pt] \overline{B_{1,k+1}^{k+2}} & \overline{A_{1,k+1}^{k+2}} \end{pmatrix}=|A_{1,k+1}^{k+2}|^2-|B_{1,k+1}^{k+2}|^2 \neq 0$, there exists complex numbers $\{c_i,d_i\}_{i=1}^k$, such that
\[\omega^{k+1} + \sum_{i=1}^k c_i \omega^i + d_i \overline{\omega}^i \in V^2_{\mathbb{C}},\]
which is a contradiction to Lemma \ref{V2todpt} due to the fact that $k \notin \mathrm{Dpt}(J)$. Therefore, the statements \eqref{fst511} and \eqref{scd511} are proved.

As to the statement \eqref{thd511}, by the statement \eqref{scd511}, the definition of $V^2$, and Corollary \ref{basisV1}, we have
\[\begin{aligned}
& d \Big(A_{1,k+1}^{k+2} \omega^{k+1} + B_{1,k+1}^{k+2} \overline{\omega}^{k+1} + \sum_{1<j\leq k}A_{1j}^{k+2} \omega^j + \sum_{j \leq k}B^{k+2}_{1j} \overline{\omega}^j \Big)\\
=\quad & \sum_{\begin{subarray}{c} 2 \leq p < q\\ p,q \in \mathrm{Dpt}(J) \end{subarray}}c_{pq} (\omega^p + \overline{\omega}^p) \wedge (\omega^q + \overline{\omega}^q) \ + \sum_{\begin{subarray}{c}  q \geq 2\\ q \in \mathrm{Dpt}(J) \end{subarray}}c_{1q}\omega^1 \wedge (\omega^q + \overline{\omega}^q)\ \, +   \\
& +  \sum_{\begin{subarray}{c} q \geq 2 \\ q \in \mathrm{Dpt}(J) \end{subarray}}c_{q1} (\omega^q + \overline{\omega}^q) \wedge \overline{\omega}^1 \ \, + \ \, c_{11} \omega^1 \wedge \overline{\omega}^1,
\end{aligned}\]
for $c_{pq} \in \mathbb{C}$. Note that the indices $p,q$ on the right hand side of the above equality are at most $k-1$ due to $k \notin \mathrm{Dpt}(J)$, and thus $d (A_{1,k+1}^{k+2} \omega^{k+1} + B_{1,k+1}^{k+2} \overline{\omega}^{k+1})$ contains no summands concerning with $\omega^k$ or $\overline{\omega}^k$. By Lemma \ref{d^2=0}, the term $d \omega^{k+1}$ can be expressed as
\[\begin{aligned}
d\omega^{k+1} &=\sum_{2\leq i \leq k-1} A^{k+1}_{i,k}\omega^i \wedge \omega^{k} + B^{k+1}_{k,i}\omega^{k}\wedge\overline{\omega}^i \\
              &+ A^{k+1}_{1,k}\omega^1 \wedge \omega^{k} + B^{k+1}_{k,1}\omega^{k} \wedge \overline{\omega}^1 + B^{k+1}_{1,k} \omega^1 \wedge \overline{\omega}^{k} \\
              & + \sum_{i<j\leq k-1} A_{ij}^{k+1} \omega^i \wedge \omega^j + \sum_{i,j\leq k-1} B^{k+1}_{ij} \omega^i \wedge \overline{\omega}^j. \\
\end{aligned}\]
The disappearance of $\omega^k$ and $\overline{\omega}^k$ in $d (A_{1,k+1}^{k+2} \omega^{k+1} + B_{1,k+1}^{k+2} \overline{\omega}^{k+1})$ implies that, for $2 \leq i \leq k-1$,  \[ A^{k+1}_{i,k}=0,\quad A^{k+1}_{1,k}=0.\]
By Lemma \ref{d^2=0} again, it yields that, for $2 \leq i \leq k-1$,  \[ B^{k+1}_{k,i}=0,\]
which follows that
\[\begin{aligned}
d\omega^{k+1} &= B^{k+1}_{k,1}\omega^{k} \wedge \overline{\omega}^1 + B^{k+1}_{1,k} \omega^1 \wedge \overline{\omega}^{k} \\
              & + \sum_{i<j\leq k-1} A_{ij}^{k+1} \omega^i \wedge \omega^j + \sum_{i,j\leq k-1} B^{k+1}_{ij} \omega^i \wedge \overline{\omega}^j, \\
\end{aligned}\]
where the equality $\det\begin{pmatrix} A^{k+2}_{1,k+1} & B^{k+2}_{1,k+1} \\[5pt] \overline{B^{k+1}_{1,k}} & B^{k+1}_{k,1}\end{pmatrix}=0$ holds and thus $B^{k+1}_{k,1}, B^{k+1}_{1,k}$ are both non-zero. Now apply the condition $\nu(\mathfrak{g})=3$  to $\omega^{k+1}$, which implies that
\[d\omega^{k+1} \in \bigwedge^2 V^2_{\mathbb{C}},\]
so by Lemma \ref{wcrt} we get,
\[\iota_{\overline{\theta}_1}d\omega^{k+1} =  -B^{k+1}_{1,k}\omega^{k} - \sum_{i\leq k-1} B^{k+1}_{i1} \omega^i \in V^2_{\mathbb{C}}.\]
Therefore, a new admissible coframe $\{\tilde{\omega}^i\}_{i=1}^n$ can be constructed as
\[\begin{cases}\tilde{\omega}^i = \omega^i,\quad 1 \leq i \leq n, \, i \neq k, \\
\tilde{\omega}^k = \omega^{k} + \frac{1}{B^{k+1}_{1,k}} \sum_{i\leq k-1} B^{k+1}_{i1} \omega^i,
\end{cases}\]
by Lemma \ref{basis_change}, and satisfies
$\tilde{\omega}^k \in V^2_{\mathbb{C}}$.
\end{proof}

\begin{proposition}\label{indpttodpt}
Let $(\mathfrak{g},J)$ be MaxN with $\nu(\mathfrak{g})=3$ and $\{\omega^i\}_{i=1}^n$ is an admissible coframe. If  $k \notin \mathrm{Dpt}(J)$ and $3 \leq k \leq n-3$, then  $k+1 \in \mathrm{Dpt}(J)$.
\end{proposition}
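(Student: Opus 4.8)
The plan is to argue by contradiction. Suppose, contrary to the assertion, that $k+1\notin\mathrm{Dpt}(J)$; I will derive a contradiction with the standing hypothesis $k\notin\mathrm{Dpt}(J)$ by feeding the shifted index $k+1$ into Proposition~\ref{inpdtnext} and then invoking Lemma~\ref{V2todpt}. The bound $3\le k\le n-3$ enters in exactly one place: it guarantees $3\le k+1\le n-2$, which is precisely the range in which Proposition~\ref{inpdtnext} is available. Before starting I would record that, by Lemma~\ref{invariance_dpt}, the set $\mathrm{Dpt}(J)$ is an invariant of the MaxN pair, independent of the admissible coframe; hence the hypothesis $k\notin\mathrm{Dpt}(J)$ survives any admissible change of coframe, and it is legitimate to compare Dpt-memberships computed in different admissible coframes.

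With the contradiction hypothesis $k+1\notin\mathrm{Dpt}(J)$ and $3\le k+1\le n-2$ in hand, I would apply Proposition~\ref{inpdtnext} with its internal index taken to be $k+1$. Part~\eqref{thd511} then produces an admissible coframe $\{\tilde\omega^i\}_{i=1}^n$ with $\tilde\omega^i=\omega^i$ for $i\neq k+1$ and $\tilde\omega^{k+1}\in V^2_{\mathbb{C}}$. The final step is to apply Lemma~\ref{V2todpt} to this coframe, reading $\tilde\omega^{k+1}\in V^2_{\mathbb{C}}$ as the trivial combination $1\cdot\tilde\omega^{k+1}+\sum_{i\le k}\bigl(0\cdot\tilde\omega^i+0\cdot\overline{\tilde\omega}^i\bigr)$, whose leading coefficient is $1\neq 0$. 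With the lemma's index instantiated at $k+1$, its conclusion is $k\in\mathrm{Dpt}(J)$, contradicting $k\notin\mathrm{Dpt}(J)$. Therefore $k+1\in\mathrm{Dpt}(J)$, as claimed.

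I expect the only real subtlety to be bookkeeping rather than substance. The two nontrivial checks are: that Proposition~\ref{inpdtnext} and Lemma~\ref{V2todpt} genuinely apply with their internal indices relabelled to $k+1$ (so that both the admissibility range and the nonvanishing-leading-coefficient condition are met), and that the coframe-independence of $\mathrm{Dpt}(J)$ from Lemma~\ref{invariance_dpt} lets me pit the conclusion of Lemma~\ref{V2todpt} against the original hypothesis even though they are obtained in different coframes. Once these compatibility points are settled the contradiction is immediate, and in particular no fresh computation with the structure constants $A^k_{ij},B^k_{ij}$ is needed beyond what Proposition~\ref{inpdtnext} already supplies.
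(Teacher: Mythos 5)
Your proof is correct, but it takes a genuinely different route from the paper's. The paper also argues by contradiction and also feeds the index $k+1$ into Proposition~\ref{inpdtnext}, but it uses part~\eqref{scd511} rather than part~\eqref{thd511}: assuming $k+1\notin\mathrm{Dpt}(J)$, it extracts the $V^2_{\mathbb{C}}$-membership of the combination $A^{k+3}_{1,k+2}\omega^{k+2}+B^{k+3}_{1,k+2}\overline{\omega}^{k+2}+\cdots$ attached to $d\omega^{k+3}$, differentiates it, and observes that the resulting expression must be a combination of wedges of $V^1_{\mathbb{C}}$-basis elements indexed by $\mathrm{Dpt}(J)\cap\{1,\dots,k-1\}$ (both $k$ and $k+1$ being excluded), while by part~\eqref{scd511} applied at index $k$ it genuinely contains $\omega^{k+1}$- and $\overline{\omega}^{k+1}$-terms coming from $d\omega^{k+2}$ --- a contradiction. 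You instead invoke the coframe normalization of part~\eqref{thd511} at index $k+1$ to produce an admissible coframe with $\tilde{\omega}^{k+1}\in V^2_{\mathbb{C}}$, and then Lemma~\ref{V2todpt} (equivalently Theorem~\ref{elementsV2}, which states directly that $\omega^{k+1}\in V^2_{\mathbb{C}}$ forces $k\in\mathrm{Dpt}(J)$) yields the contradiction in one line. Your bookkeeping is sound: the range $3\le k\le n-3$ gives $4\le k+1\le n-2$ as required, Proposition~\ref{inpdtnext}\eqref{thd511} asserts the new coframe is admissible, and Lemma~\ref{invariance_dpt} makes $\mathrm{Dpt}(J)$ coframe-independent, so the conclusion $k\in\mathrm{Dpt}(J)$ obtained in the new coframe legitimately contradicts the original hypothesis. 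What each approach buys: yours is shorter and reuses the already-packaged normalization, at the cost of hiding the mechanism inside the proof of part~\eqref{thd511}; the paper's version keeps the obstruction visible (the unavoidable $\omega^{k+1}$-terms in $d\omega^{k+3}$'s $V^2$-condition) at the cost of an extra round of explicit term-tracking. Both ultimately rest on the same input, namely Proposition~\ref{inpdtnext} instantiated at the shifted index.
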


\begin{proof}
Assume the contrary, namely, $k+1 \notin \mathrm{Dpt}(J)$. Since $4 \leq k+1 \leq n-2$,  by Proposition \ref{inpdtnext}, we know that $\omega^{k+2}$ satisfies the condition
\[ A_{1,k+2}^{k+3} \omega^{k+2} + B_{1,k+2}^{k+3} \overline{\omega}^{k+2} + \sum_{1<j\leq k+1}A_{1j}^{k+3} \omega^j + \sum_{j \leq k+1}B^{k+3}_{1j} \overline{\omega}^j \in V^2_{\mathbb{C}}, \]
where the coefficients $A_{1,k+2}^{k+3},B_{1,k+2}^{k+3},\{A_{1j}^{k+3}\}_{1<j\leq k+1},\{B^{k+3}_{1j}\}_{j \leq k+1}$ come from $d\omega^{k+3}$ and $|A_{1,k+2}^{k+3}|=|B_{1,k+2}^{k+3}| \neq 0$. By the definition of $V^2$ and Corollary \ref{basisV1}, it similarly follows that
\[\begin{aligned}
& d \Big(A_{1,k+2}^{k+3} \omega^{k+2} + B_{1,k+2}^{k+3} \overline{\omega}^{k+2} + \sum_{1<j\leq k+1}A_{1j}^{k+3} \omega^j + \sum_{j \leq k+1}B^{k+3}_{1j} \overline{\omega}^j \Big)\\
=\quad&\sum_{\begin{subarray}{c} 2 \leq p < q\\ p,q \in \mathrm{Dpt}(J) \end{subarray}}c_{pq} (\omega^p + \overline{\omega}^p) \wedge (\omega^q + \overline{\omega}^q) \\
& + \sum_{\begin{subarray}{c}  q \geq 2\\ q \in \mathrm{Dpt}(J) \end{subarray}}c_{1q}\omega^1 \wedge (\omega^q + \overline{\omega}^q)
+  \sum_{\begin{subarray}{c} q \geq 2 \\ q \in \mathrm{Dpt}(J) \end{subarray}}c_{q1} (\omega^q + \overline{\omega}^q) \wedge \overline{\omega}^1 \\
& + c_{11} \omega^1 \wedge \overline{\omega}^1,
\end{aligned}\]
for $c_{pq} \in \mathbb{C}$. Note that $k \notin \mathrm{Dpt}(J)$ and $k+1 \notin \mathrm{Dpt}(J)$ now, and thus $p,q$ on the right hand side of the equality are at most $k-1$. However, from $k \notin \mathrm{Dpt}(J)$ and the statement \eqref{scd511} of Proposition \ref{inpdtnext}, it is clear that \[d \Big(A_{1,k+2}^{k+3} \omega^{k+2} + B_{1,k+2}^{k+3} \overline{\omega}^{k+2} + \sum_{1<j\leq k+1}A_{1j}^{k+3} \omega^j + \sum_{j \leq k+1}B^{k+3}_{1j} \overline{\omega}^j \Big)\]
indeed contains summands concerning with $\omega^{k+1}$ and $\overline{\omega}^{k+1}$, which is a contradiction. Therefore, we must have $k+1 \in \mathrm{Dpt}(J)$.
\end{proof}

Proposition \ref{indpttodpt} says that, if $k \notin \mathrm{Dpt}(J)$ for some $k$ in the range $3 \leq k \leq n-3$, then the  next index $k+1$ has to belong to $\mathrm{Dpt}(J)$. Similarly, when $k \in \mathrm{Dpt}(J)$ for some $3 \leq k \leq n-1$, then the next index $k+1$ can't live in $\mathrm{Dpt}(J)$ as shown in what follows.

\begin{proposition}\label{dpttoindpt}
Let $(\mathfrak{g},J)$ be MaxN with $\nu(\mathfrak{g})=3$ and $\{\omega^i\}_{i=1}^n$ is an admissible coframe. If $k \in \mathrm{Dpt}(J)$ and $3 \leq k \leq n-1$, then $k+1 \notin \mathrm{Dpt}(J)$.
\end{proposition}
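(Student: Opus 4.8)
The plan is to argue by contradiction: suppose that both $k$ and $k+1$ lie in $\mathrm{Dpt}(J)$, with $3\le k\le n-1$, and derive a contradiction. First I would record the precise shape of the two relevant differentials. Since $k\in\mathrm{Dpt}(J)$, the normal form in Lemma \ref{d^2=0} forces $d\omega^k$ to be of pure type $(1,1)$ with no $(2,0)$ part, its top pair being $B^k_{1,k-1}\,\omega^1\wedge\overline{\omega}^{k-1}+B^k_{k-1,1}\,\omega^{k-1}\wedge\overline{\omega}^1$ where $B^k_{1,k-1}=\overline{B^k_{k-1,1}}\neq 0$, and all indices in the remaining summands at most $k-2$. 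Likewise $k+1\in\mathrm{Dpt}(J)$ yields, again by Lemma \ref{d^2=0},
\[ d\omega^{k+1}=B^{k+1}_{k,1}\,\omega^k\wedge\overline{\omega}^1+B^{k+1}_{1,k}\,\omega^1\wedge\overline{\omega}^k+\sum_{i,j\le k-1}B^{k+1}_{ij}\,\omega^i\wedge\overline{\omega}^j, \]
with $B^{k+1}_{k,1}=\overline{B^{k+1}_{1,k}}\neq 0$.

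Next I would feed in the hypothesis $\nu(\mathfrak{g})=3$, which says $V^3_{\mathbb{C}}=\mathfrak{g}^*_{\mathbb{C}}$ and hence $\omega^{k+1}\in V^3_{\mathbb{C}}$, so that $d\omega^{k+1}\in\bigwedge^2 V^2_{\mathbb{C}}$ by the definition of $V^3$. Contracting with the frame vector $\overline{\theta}_1$ dual to $\overline{\omega}^1$ and invoking Lemma \ref{wcrt}, I obtain $\iota_{\overline{\theta}_1}d\omega^{k+1}\in V^2_{\mathbb{C}}$; a direct computation gives $\iota_{\overline{\theta}_1}d\omega^{k+1}=-\eta$, where
\[ \eta:=B^{k+1}_{k,1}\,\omega^k+\sum_{i\le k-1}B^{k+1}_{i,1}\,\omega^i. \]
Thus $\eta\in V^2_{\mathbb{C}}$, and its coefficient on $\omega^k$ is $B^{k+1}_{k,1}\neq0$.

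The contradiction then comes from examining $d\eta$, which must lie in $\bigwedge^2 V^1_{\mathbb{C}}$ by the definition of $V^2$. Writing $d\eta=B^{k+1}_{k,1}\,d\omega^k+\sum_{i\le k-1}B^{k+1}_{i,1}\,d\omega^i$, I would isolate the coefficients of the two monomials $\omega^1\wedge\overline{\omega}^{k-1}$ and $\omega^1\wedge\omega^{k-1}$. Because each $d\omega^i$ with $i\le k-1$ involves only indices $\le k-2$, and because $d\omega^k$ has no $(2,0)$ part (here $k\in\mathrm{Dpt}(J)$ is essential), the coefficient of $\omega^1\wedge\overline{\omega}^{k-1}$ in $d\eta$ equals $B^{k+1}_{k,1}B^k_{1,k-1}\neq0$, whereas the coefficient of $\omega^1\wedge\omega^{k-1}$ equals $0$. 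On the other hand, by the basis description of $V^1_{\mathbb{C}}$ in Corollary \ref{basisV1}, the form $\overline{\omega}^{k-1}$ can enter an element of $\bigwedge^2 V^1_{\mathbb{C}}$ only through the vector $\omega^{k-1}+\overline{\omega}^{k-1}$ (and only when $k-1\in\mathrm{Dpt}(J)$), so in every element of $\bigwedge^2 V^1_{\mathbb{C}}$ the coefficients of $\omega^1\wedge\overline{\omega}^{k-1}$ and $\omega^1\wedge\omega^{k-1}$ must coincide. This is the desired contradiction, establishing $k+1\notin\mathrm{Dpt}(J)$.

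The main obstacle, and the step demanding the most care, is the bookkeeping that pins down exactly these two coefficients of $d\eta$: I must check that neither the lower terms $\sum_{i\le k-1}B^{k+1}_{i,1}\,d\omega^i$ nor the remaining summands of $d\omega^k$ contribute to $\omega^1\wedge\overline{\omega}^{k-1}$ or $\omega^1\wedge\omega^{k-1}$, which is precisely where the reduced normal form of Lemma \ref{d^2=0} and the index-range restrictions are used. Everything else is a direct consequence of the definitions of $V^1_{\mathbb{C}}$ and $V^2_{\mathbb{C}}$ together with Lemma \ref{wcrt} and Corollary \ref{basisV1}. It is worth noting that this pairing obstruction in $\bigwedge^2 V^1_{\mathbb{C}}$ handles the cases $k-1\in\mathrm{Dpt}(J)$ and $k-1\notin\mathrm{Dpt}(J)$ uniformly, so no case split is needed.
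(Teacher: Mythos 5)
Your proposal is correct and follows essentially the same route as the paper's proof: the normal forms from Lemma \ref{d^2=0}, contraction of $d\omega^{k+1}$ with $\overline{\theta}_1$ to place $\eta=B^{k+1}_{k,1}\omega^k+\sum_{i\le k-1}B^{k+1}_{i1}\omega^i$ in $V^2_{\mathbb{C}}$, and then a contradiction from $d\eta\in\bigwedge^2 V^1_{\mathbb{C}}$ at the level of the $\omega^{k-1},\overline{\omega}^{k-1}$ terms. The only cosmetic difference is how the final contradiction is phrased — you compare the coefficients of $\omega^1\wedge\omega^{k-1}$ and $\omega^1\wedge\overline{\omega}^{k-1}$, while the paper notes that $\bigwedge^2 V^1_{\mathbb{C}}$ would force a nonzero $(2,0)$-component against the pure $(1,1)$ type of $d\omega^k$ — which is the same observation.
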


\begin{proof}
By Lemma \ref{d^2=0}, the condition $k \in \mathrm{Dpt}(J)$ implies that
\begin{equation}\label{ddpt}
d\omega^k =  B^k_{k-1,1} \omega^{k-1} \wedge \overline{\omega}^1 + B^k_{1,k-1} \omega^1 \wedge \overline{\omega}^{k-1} + \sum_{i,j\leq k-2}B^k_{ij}\omega^i \wedge \overline{\omega}^j,
\end{equation}
where $B^k_{1,k-1}=\overline{B^{k}_{k-1,1}} \neq 0$ and $B^k_{ij}=\overline{B^k_{ji}}$ for $i,j \leq k-2$ hold. By Theorem \ref{elementsV2} and the assumption $\nu(\mathfrak{g})=3$, we get $\omega^k \in V^3_{\mathbb{C}} \setminus V^2_{\mathbb{C}}$.

Suppose that $k+1 \in \mathrm{Dpt}(J)$. It follows similarly as in Lemma \ref{d^2=0} that,
\begin{equation}
d\omega^{k+1} =  B^{k+1}_{k,1} \omega^{k} \wedge \overline{\omega}^1 + B^{k+1}_{1,k} \omega^1 \wedge \overline{\omega}^{k} + \sum_{i,j\leq k-1}B^{k+1}_{ij}\omega^i \wedge \overline{\omega}^j,
\end{equation}
where $B^{k+1}_{1,k}=\overline{B^{k+1}_{k,1}} \neq 0$ and $B^{k+1}_{ij}=\overline{B^{k+1}_{ji}}$ for $i,j \leq k-1$ hold. The theorem \ref{elementsV2} and the condition $\nu(\mathfrak{g})=3$ imply that
$\omega^{k+1} \in V^3_{\mathbb{C}} \setminus V^2_{\mathbb{C}}$. It yields that, from Lemma \ref{wcrt} and the definition of $V^3$, for $\theta \in \mathfrak{g}_{\mathbb{C}}$,
\[\iota_{\theta}d\omega^{k+1} \in V^2_{\mathbb{C}}.\]
With $\{\theta_i\}_{i=1}^n$ denoted by the dual basis of $\{\omega^i\}_{i=1}^n$, it follows that
\[\iota_{\overline{\theta}_1}d\omega^{k+1}=-B^{k+1}_{k,1} \omega^{k}-\sum_{i\leq k-1}B^{k+1}_{i1}\omega^{i} \in V^2_{\mathbb{C}}. \]
Then, since $B^{k+1}_{k,1} \omega^{k}+\sum_{i\leq k-1}B^{k+1}_{i1}\omega^{i}$ is of the type $(1,0)$, it yields that
\[
d \Big(B^{k+1}_{k,1} \omega^{k}+\sum_{i\leq k-1}B^{k+1}_{i1}\omega^{i}\Big)
= \sum_{\begin{subarray}{c} q \geq 2 \\ q \in \mathrm{Dpt}(J) \end{subarray}} c_q \omega^1 \wedge (\omega^q + \overline{\omega}^q)+c_{11}\omega^1 \wedge \overline{\omega}^1,
\]
for $c_q \in \mathbb{C}$, by the definition of $V^2$ and Corollary \ref{basisV1}. It implies that the summands of $B^{k+1}_{k,1}d \omega^{k}$, which concerns with $\omega^{k-1}$ or $\overline{\omega}^{k-1}$, are $ c_{k-1} \omega^1 \wedge (\omega^{k-1} + \overline{\omega}^{k-1})$, where $k-1 \in \mathrm{Dpt}(J)$ and $c_{k-1} \neq 0$, which contradicts with the fact that the summands of $d \omega^{k}$, concerning with $\omega^{k-1}$ or $\overline{\omega}^{k-1}$, are of the type $(1,1)$ as in the expression \eqref{ddpt}. This shows that  $k+1 \notin \mathrm{Dpt}(J)$.
\end{proof}

Now we are ready to prove the structural result Theorem \ref{strthmn-2} stated in the introduction.

\begin{proof}[Proof of Theorem \ref{strthmn-2}]
It is easy to see that the two cases can be separated as follows:
\[n-2 \in \mathrm{Dpt}(J)\quad \text{and} \quad n-2 \notin \mathrm{Dpt}(J).\]
When $n-2 \in \mathrm{Dpt}(J)$ holds, it follows that $n-3 \notin \mathrm{Dpt}(J)$ by Proposition \ref{dpttoindpt} and thus $n-4 \in \mathrm{Dpt}(J)$ by Proposition \ref{indpttodpt} until $k=3$ is reached. Therefore, the two statements in \eqref{n-2_dept} are proved.
Similarly, the two statements in \eqref{n-2_indept} are also established.

Now let us fix an admissible coframe $\{\tau^i\}_{i=1}^n$, which satisfies, due to \eqref{omega2} in Remark \ref{convention},
\[\begin{cases} d\tau^1 =0,\\ d\tau^2 = \tau^1 \wedge \overline{\tau}^1.\end{cases}\] The following direction is clear by Theorem \ref{elementsV2}, for $3 \leq k \leq n-2$,
\[k \notin \mathrm{Dpt}(J)\Longleftarrow \tau^k \in V^2_{\mathbb{C}}.\]
Conversely, Proposition \ref{inpdtnext} enable us to modify $\tau^k$ to $\omega^k$, with other terms of the coframe left unchanged, such that
$ \omega^k \in V^2_{\mathbb{C}}$
and the new coframe $\{\omega^i\}_{i=1}^n$ is still admissible, for $k \notin \mathrm{Dpt}(J)$ where $3 \leq k \leq n-2$. Therefore, there exists an admissible coframe $\{\omega^k\}_{k=1}^n$, such that the equalities are established
\[\begin{cases} d\omega^1 =0, \\d\omega^2 = \omega^1 \wedge \overline{\omega}^1, \end{cases}\]
and, for $3 \leq k \leq n-2$, it holds that
\[k \notin \mathrm{Dpt}(J)\Longleftrightarrow \omega^k \in V^2_{\mathbb{C}}.\]
Hence the following equivalence is also established under $\{\omega^k\}_{k=1}^n$, for $3 \leq k \leq n-2$,
\[k \in \mathrm{Dpt}(J) \Longleftrightarrow \omega^k \in V^3_{\mathbb{C}} \setminus V^2_{\mathbb{C}},\]
since $\nu(\mathfrak{g})=3$.

Finally, under the admissible coframe $\{\omega^k\}_{k=1}^n$, it is obvious that, from the results above,
\[\sum_{k=1}^{n-2}(a_k \omega^k + b_k \overline{\omega}^k ) \in V^2_{\mathbb{C}} \ \, \Longleftrightarrow \
\sum_{\begin{subarray}{c} 3 \leq k \leq n-2 \\  k \in \mathrm{Dpt}(J) \\ \end{subarray}}\!(a_k \omega^k + b_k \overline{\omega}^k)
\in V^2_{\mathbb{C}},\]
since $\omega^1,\omega^2$ and $\omega^k$ all belong to $V^2_{\mathbb{C}}$, where $k \notin \mathrm{Dpt}(J)$ and $3 \leq k \leq n-2$. It follows that, by the definition of $V^2$ and Corollary \ref{basisV1},
\[ \begin{aligned} d \Big( \sum_{\begin{subarray}{c} 3 \leq k \leq n-2 \\  k \in \mathrm{Dpt}(J) \\ \end{subarray}}a_k \omega^k + b_k \overline{\omega}^k\Big) =& \sum_{\begin{subarray}{c} 3 \leq k \leq n-2 \\  k \in \mathrm{Dpt}(J) \\ \end{subarray}}(a_k-b_k) d\omega^k \\
=&\sum_{\begin{subarray}{c} 2 \leq p < q\\ p,q \in \mathrm{Dpt}(J) \end{subarray}}c_{pq} (\omega^p + \overline{\omega}^p) \wedge (\omega^q + \overline{\omega}^q) \\
& + \sum_{\begin{subarray}{c}  q \geq 2\\ q \in \mathrm{Dpt}(J) \end{subarray}}c_{1q}\omega^1 \wedge (\omega^q + \overline{\omega}^q)
+  \sum_{\begin{subarray}{c} q \geq 2 \\ q \in \mathrm{Dpt}(J) \end{subarray}}c_{q1} (\omega^q + \overline{\omega}^q) \wedge \overline{\omega}^1 \\
& + c_{11} \omega^1 \wedge \overline{\omega}^1,
\end{aligned}\]
for $c_{pq} \in \mathbb{C}$. Note that $d\omega^k$ is of the type $(1,1)$, due to $k \in \mathrm{Dpt}(J)$, which implies that
\[c_{pq}=0\quad \text{for}\quad 2 \leq p <q\quad  \text{and}\quad c_{1q}=c_{q1}=0\quad \text{for}\quad q \geq 2.\]
As $d\omega^k$ has no summand of some multiple of $\omega^1\wedge \overline{\omega}^1$ for $3 \leq k \leq n$ from \eqref{omegak} in Remark \ref{convention}, it yields that $c_{11}=0$. Therefore, the conclusion follows
\[\sum_{\begin{subarray}{c} 3 \leq k \leq n-2 \\  k \in \mathrm{Dpt}(J) \\ \end{subarray}}(a_k-b_k) d\omega^k =0,\]
which implies that $a_k = b_k$, where $k \in \mathrm{Dpt}(J)$ and $3\leq k \leq n-2$, since $\{d\omega^k\}_{k=2}^n$ is $\mathbb{C}$-linearly independent. Conversely, when the condition $a_k = b_k$ for $k \in \mathrm{Dpt}(J)$ and $3\leq k \leq n-2$ is established, it is clear that
\[\sum_{\begin{subarray}{c} 3 \leq k \leq n-2 \\  k \in \mathrm{Dpt}(J) \\ \end{subarray}}a_k \omega^k + b_k \overline{\omega}^k
\ \in \ V^2_{\mathbb{C}},\]
since $\omega^k + \overline{\omega}^k \in V^1_{\mathbb{C}}$ for $k \in \mathrm{Dpt}(J)$, from Corollary \ref{basisV1}.
This completes the proof of Theorem \ref{strthmn-2}.
\end{proof}

Theorem \ref{strthmn-2} motivates the following terminology to study the MaxN pair $(\mathfrak{g},J)$ with $\nu(\mathfrak{g})=3$.
\begin{definition}
Let $(\mathfrak{g},J)$ be MaxN with $\nu(\mathfrak{g})=3$. The admissible coframe $\{\omega^k\}_{k=1}^n$ as in Theorem \ref{strthmn-2} is called a \emph{strictly admissible} coframe.
\end{definition}
The following results can be viewed as a complementarity of Theorem \ref{strthmn-2}, refining the choice of $\omega^{n-1}$ and $\omega^{n}$ in a strictly admissible coframe $\{\omega^k\}_{k=1}^n$ above.

\begin{proposition}\label{n-2dptn-1}
Let $(\mathfrak{g},J)$ be MaxN, where $\nu(\mathfrak{g})=3$ and $\dim_{\mathbb{C}}\mathfrak{g}=n \geq 5$, with a strictly admissible coframe $\{\omega^k\}_{k=1}^n$. Assume that $n-2 \in \mathrm{Dpt}(J)$.
Then
\begin{enumerate}
\item\label{omegandpt} $n-1 \notin \mathrm{Dpt}(J)$ and $\omega^{n} \in V^3_{\mathbb{C}} \setminus V^2_{\mathbb{C}}$,
\item\label{framen-1dpt} one of the following holds:
\begin{enumerate}
\item\label{n-1inDPT} there exists a new strictly admissible coframe $\{\tilde{\omega}^k\}_{k=1}^n$, such that
    \[\tilde{\omega}^{k}=\omega^k\quad \text{for}\quad k \neq n-1\quad \text{and}\quad \tilde{\omega}^{n-1} \in V^2_{\mathbb{C}},\]
\item\label{n-1NinDPT} if \eqref{n-1inDPT} does not hold, a new strictly admissible coframe $\{\tilde{\omega}^k\}_{k=1}^n$ could still be constructed, such that
    \[\tilde{\omega}^{k}=\omega^k\quad \text{for}\quad k \leq n-2,\quad
    \tilde{\omega}^{n-1} +\overline{\tilde{\omega}}^{n-1} \in V^2_{\mathbb{C}},\]
    and at this time, the expression of $d\tilde{\omega}^n$ is
    \[
    d\tilde{\omega}^{n} = \tilde{\omega}^1 \wedge \tilde{\omega}^{n-1} +  \tilde{\omega}^1 \wedge \overline{\tilde{\omega}}^{n-1}
    + \sum_{i<j\leq n-2} \tilde{A}_{ij}^{n} \tilde{\omega}^i \wedge \tilde{\omega}^j + \sum_{i,j\leq n-2} \tilde{B}^{n}_{ij} \tilde{\omega}^i \wedge \overline{\tilde{\omega}}^j, \]
\end{enumerate}
\item\label{crnV2n-1dpt} under the strictly admissible coframe $\{\tilde{\omega}^k\}_{k=1}^n$ above, for $a_k,b_k \in \mathbb{C}$, the condition \[\sum_{k=1}^{n-1} (a_k\tilde{\omega}^k + b_k \overline{\tilde{\omega}}^k ) \, \in \, V^2_{\mathbb{C}}\] holds
    if and only if one of the following occurs, with respect to the two cases \eqref{n-1inDPT} and \eqref{n-1NinDPT} above,
\begin{enumerate}
\item\label{crn1} $a_k=b_k,\quad \text{for}\ \, k\in \mathrm{Dpt}(J)\ \text{and}\ \, 3 \leq k \leq n-2$,
\item\label{crn2} $a_{n-1}=b_{n-1},\quad a_k=b_k,\quad \text{for}\ \, k\in \mathrm{Dpt}(J)\ \text{and}\ \, 3 \leq k \leq n-2$.
\end{enumerate}
\end{enumerate}
\end{proposition}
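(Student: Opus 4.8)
The plan is to analyze the two top coframe elements $\omega^{n-1},\omega^{n}$ by the contraction technique of Proposition \ref{inpdtnext}, the only genuinely new feature being that $\omega^{n}$ is the last element, so there is no $\omega^{n+1}$ to fall back on; this is exactly what creates the dichotomy in \ref{framen-1dpt}. First I would dispose of \ref{omegandpt}: since $n-2\in\mathrm{Dpt}(J)$ and $3\le n-2\le n-1$, Proposition \ref{dpttoindpt} gives $n-1\notin\mathrm{Dpt}(J)$ at once. As $\nu(\mathfrak{g})=3$ we have $V^3_{\mathbb{C}}=\mathfrak{g}^*_{\mathbb{C}}$, so $\omega^{n}\in V^3_{\mathbb{C}}$ trivially, while $\omega^{n}\in V^2_{\mathbb{C}}$ would force $n-1\in\mathrm{Dpt}(J)$ by Theorem \ref{elementsV2}, a contradiction; hence $\omega^{n}\in V^3_{\mathbb{C}}\setminus V^2_{\mathbb{C}}$. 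For the core, I would expand $d\omega^{n}$ using Lemma \ref{d^2=0} (in particular $B^{n}_{i,n-1}=0$ for $2\le i\le n-2$), and exploit $\omega^{n}\in V^3_{\mathbb{C}}$ through Lemma \ref{wcrt}, which gives $\iota_{\theta}d\omega^{n}\in V^2_{\mathbb{C}}$ for all $\theta\in\mathfrak{g}_{\mathbb{C}}$. Contracting with $\overline{\theta}_i$ for $1\le i\le n-2$ yields the pure $(1,0)$ members $B^{n}_{n-1,i}\omega^{n-1}-\sum_{j\le n-2}B^{n}_{ji}\omega^{j}\in V^2_{\mathbb{C}}$.

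The dichotomy then runs as follows. If some $B^{n}_{n-1,i}\neq 0$ (equivalently, by Lemma \ref{d^2=0}, some $A^{n}_{i,n-1}\neq 0$ with $i\ge 2$, or $B^{n}_{n-1,1}\neq 0$), then normalizing that pure $(1,0)$ relation and applying Lemma \ref{basis_change} (valid because $n-1\notin\mathrm{Dpt}(J)$) produces a strictly admissible coframe with $\tilde{\omega}^{n-1}\in V^2_{\mathbb{C}}$, which is case \ref{n-1inDPT}. Otherwise all these coefficients vanish and $d\omega^{n}=A^{n}_{1,n-1}\omega^{1}\wedge\omega^{n-1}+B^{n}_{1,n-1}\omega^{1}\wedge\overline{\omega}^{n-1}+\Theta$ with $\Theta$ supported on indices $\le n-2$, and the structure equation keeps $A^{n}_{1,n-1},B^{n}_{1,n-1}$ from both vanishing. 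Here I would contract $\iota_{\theta_1}d\omega^{n}=A^{n}_{1,n-1}\omega^{n-1}+B^{n}_{1,n-1}\overline{\omega}^{n-1}+P\in V^2_{\mathbb{C}}$ with $P$ supported on indices $\le n-2$, and argue that if $|A^{n}_{1,n-1}|\neq|B^{n}_{1,n-1}|$ one combines this relation with its conjugate to eliminate $\overline{\omega}^{n-1}$, obtaining $\omega^{n-1}+Q\in V^2_{\mathbb{C}}$ whose $(0,1)$-part has vanishing $\mathrm{Dpt}(J)$-coefficients (a consequence of the identities forced by $d^{2}\omega^{n}=0$); subtracting that $(0,1)$-part gives a pure $(1,0)$ element and returns us to case \ref{n-1inDPT}. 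Thus when \ref{n-1inDPT} fails we necessarily have $|A^{n}_{1,n-1}|=|B^{n}_{1,n-1}|\neq 0$. Rescaling $\omega^{n-1}$ by a unit modulus constant and $\omega^{n}$ by a constant (leaving $\omega^{1},\dots,\omega^{n-2}$ fixed) normalizes both coefficients to $1$, yielding the displayed form of $d\tilde{\omega}^{n}$; and since $d\tilde{\omega}^{n}=\tilde{\omega}^{1}\wedge(\tilde{\omega}^{n-1}+\overline{\tilde{\omega}}^{n-1})+\Theta\in\bigwedge^{2}V^2_{\mathbb{C}}$ with $\tilde{\omega}^{1}\in V^2_{\mathbb{C}}$, contracting by $\theta_{1}$ gives $\tilde{\omega}^{n-1}+\overline{\tilde{\omega}}^{n-1}\in V^2_{\mathbb{C}}$, which is case \ref{n-1NinDPT}.

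For \ref{crnV2n-1dpt} I would split $\sum_{k=1}^{n-1}(a_k\tilde{\omega}^{k}+b_k\overline{\tilde{\omega}}^{k})$ into its $k\le n-2$ part, governed by the final assertion of Theorem \ref{strthmn-2}, plus the $k=n-1$ term. In case \ref{n-1inDPT}, both $\tilde{\omega}^{n-1}$ and $\overline{\tilde{\omega}}^{n-1}$ lie in $V^2_{\mathbb{C}}$, so membership is decided solely by the $k\le n-2$ part, giving \ref{crn1}. In case \ref{n-1NinDPT}, the analysis above shows that, modulo $\mathrm{span}\{\tilde{\omega}^{k},\overline{\tilde{\omega}}^{k}:k\le n-2\}$, the only element of $V^2_{\mathbb{C}}$ involving $\tilde{\omega}^{n-1}$ or $\overline{\tilde{\omega}}^{n-1}$ is a multiple of $\tilde{\omega}^{n-1}+\overline{\tilde{\omega}}^{n-1}$ (all other contractions carried no $\omega^{n-1}$); hence writing $a_{n-1}\tilde{\omega}^{n-1}+b_{n-1}\overline{\tilde{\omega}}^{n-1}=b_{n-1}(\tilde{\omega}^{n-1}+\overline{\tilde{\omega}}^{n-1})+(a_{n-1}-b_{n-1})\tilde{\omega}^{n-1}$ forces $a_{n-1}=b_{n-1}$, and the $k\le n-2$ part then yields \ref{crn2}.

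The main obstacle is the middle subcase of the dichotomy: ruling out $|A^{n}_{1,n-1}|\neq|B^{n}_{1,n-1}|$ when \ref{n-1inDPT} fails. Unlike the interior situation of Proposition \ref{inpdtnext}, applying Lemma \ref{V2todpt} to a relation with leading index $n-1$ only returns the standing hypothesis $n-2\in\mathrm{Dpt}(J)$ and produces no contradiction, so the equality of moduli cannot be read off formally. It has to be extracted from the coefficient identities coming from $d^{2}\omega^{n}=0$, combined with Lemma \ref{d^2=0} and the basis description of $V^1_{\mathbb{C}}$ in Corollary \ref{basisV1}; verifying that these identities kill precisely the $\mathrm{Dpt}(J)$-indexed coefficients of the residual $(0,1)$-part $Q$ is where the genuine bookkeeping lies, and it is this step that simultaneously pins down the normalization of $d\tilde{\omega}^{n}$ and the uniqueness of the relation $\tilde{\omega}^{n-1}+\overline{\tilde{\omega}}^{n-1}$ needed for \ref{crn2}.
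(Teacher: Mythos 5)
Your skeleton coincides with the paper's: part \eqref{omegandpt} via Proposition \ref{dpttoindpt} and Theorem \ref{elementsV2}, then the expansion of $d\omega^n$ via Lemma \ref{d^2=0}, contractions $\iota_{\theta}d\omega^n\in V^2_{\mathbb{C}}$ via Lemma \ref{wcrt}, and the trichotomy on the coefficients $B^n_{n-1,i}$, $B^n_{n-1,1}$, $|A^n_{1,n-1}|$ versus $|B^n_{1,n-1}|$. But two steps that you flag as ``bookkeeping'' are left genuinely open, and in both places the mechanism you propose is not the one that works. First, in the subcase $|A^n_{1,n-1}|\neq|B^n_{1,n-1}|$, after eliminating $\overline{\omega}^{n-1}$ you obtain $\omega^{n-1}+\sum_{i\le n-2}(c_i\omega^i+d_i\overline{\omega}^i)\in V^2_{\mathbb{C}}$, and you assert that the $\mathrm{Dpt}(J)$-indexed coefficients of the $(0,1)$-part vanish ``as a consequence of the identities forced by $d^2\omega^n=0$.'' No such vanishing is needed, and none is proved: the paper instead discards the terms with $i\le 2$ or $i\notin\mathrm{Dpt}(J)$ (already in $V^2_{\mathbb{C}}$ by Theorem \ref{strthmn-2}) and, for $i\in\mathrm{Dpt}(J)$, replaces $d_i\overline{\omega}^i$ by $-d_i\omega^i$ modulo $\omega^i+\overline{\omega}^i\in V^1_{\mathbb{C}}$ (Corollary \ref{basisV1}), landing directly on a pure $(1,0)$ relation and hence on case \eqref{n-1inDPT}. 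Your route makes the step look harder than it is and, as written, rests on an unestablished claim.

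Second, and more seriously, in the subcase $|A^n_{1,n-1}|=|B^n_{1,n-1}|\neq0$ you produce $\tilde{\omega}^{n-1}$ by a unit-modulus rescaling of $\omega^{n-1}$ alone and then claim that contracting $d\tilde{\omega}^n$ by $\theta_1$ ``gives $\tilde{\omega}^{n-1}+\overline{\tilde{\omega}}^{n-1}\in V^2_{\mathbb{C}}$.'' It does not: the contraction yields $\tilde{\omega}^{n-1}+\overline{\tilde{\omega}}^{n-1}+\sum_{1<j\le n-2}A^n_{1j}\omega^j+\sum_{j\le n-2}B^n_{1j}\overline{\omega}^j\in V^2_{\mathbb{C}}$, and the residual sum cannot simply be dropped. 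The paper must further replace $\omega^{n-1}$ by $\omega^{n-1}-\sum_{j\in\mathrm{Dpt}(J)}e_j\omega^j$ where the $e_j$ solve $A^n_{1,n-1}e_j+A^n_{1j}=B^n_{1,n-1}\overline{e}_j+B^n_{1j}$; the solvability of these equations is exactly where a conjugation identity among the $A^n_{1j},B^n_{1j}$ enters, obtained by applying $d$ to the $V^2_{\mathbb{C}}$-relation, expanding against the basis of Corollary \ref{basisV1}, and comparing with the conjugate expression. Without this correction the normalized form of $d\tilde{\omega}^n$ and the relation $\tilde{\omega}^{n-1}+\overline{\tilde{\omega}}^{n-1}\in V^2_{\mathbb{C}}$ are not simultaneously achieved, and the uniqueness statement underlying \eqref{crn2} (that, modulo indices $\le n-2$, only multiples of $\tilde{\omega}^{n-1}+\overline{\tilde{\omega}}^{n-1}$ lie in $V^2_{\mathbb{C}}$) is not available. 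Your part \eqref{crnV2n-1dpt} is fine once these constructions are in place, but as it stands the proposal identifies the right dichotomy without closing either branch of its hard case.
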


\begin{proof}
It is clear that $n-1 \notin \mathrm{Dpt}(J)$ by the assumption $n-2 \in \mathrm{Dpt}(J)$ and Proposition \ref{dpttoindpt}.
It also follows that $\omega^{n} \in V^3_{\mathbb{C}} \setminus V^2_{\mathbb{C}}$, by Theorem \ref{elementsV2} and $\nu(\mathfrak{g})=3$.

From Lemma \ref{d^2=0}, the expression of $d \omega^{n}$ is
\[\begin{aligned}
d\omega^{n} &=\sum_{2\leq i \leq n-2} A^{n}_{i,n-1}\omega^i \wedge \omega^{n-1} + B^{n}_{n-1,i}\omega^{n-1}\wedge\overline{\omega}^i \\
              &+ A^{n}_{1,n-1}\omega^1 \wedge \omega^{n-1} + B^{n}_{n-1,1}\omega^{n-1} \wedge \overline{\omega}^1 + B^{n}_{1,n-1} \omega^1 \wedge \overline{\omega}^{n-1} \\
              & + \sum_{i<j\leq n-2} A_{ij}^{n} \omega^i \wedge \omega^j + \sum_{i,j\leq n-2} B^{n}_{ij} \omega^i \wedge \overline{\omega}^j. \\
\end{aligned}\]
It follows that \[d\omega^n \in \bigwedge^2 V^2_{\mathbb{C}},\]
by the definition of $V^3$ and $\omega^{n} \in V^3_{\mathbb{C}}$. Hence, from Lemma \ref{wcrt}, for any $\theta \in \mathfrak{g}_{\mathbb{C}}$, it yields that \[\iota_{\theta}d\omega^n \in V^2_{\mathbb{C}}.\]
We claim that there exists a new strictly admissible coframe $\{\tilde{\omega}^k\}_{k=1}^n$, such that \[ \tilde{\omega}^{k}=\omega^k\quad \text{for}\quad k \neq n-1\quad \text{and}\quad \tilde{\omega}^{n-1} \in V^2_{\mathbb{C}},\]
if one of the following is established:
\begin{enumerate}[$($A$)$]
\item\label{A} $B^{n}_{n-1,i} \neq 0$ for some $i$ satisfying $2 \leq i \leq n-2$,
\item\label{B} $B^{n}_{n-1,1} \neq 0$,
\item\label{C} $|A^{n}_{1,n-1}|\neq |B^{n}_{1,n-1}|$, when $B^{n}_{n-1,1} = 0$ and $B^{n}_{n-1,i} = 0$ for $2 \leq i \leq n-2$.
\end{enumerate}
Actually, when \eqref{A} holds, with $\{\theta_k\}_{k=1}^n$ denoted by the dual basis of $\{\omega^k\}_{k=1}^n$, it yields that
\[ \iota_{\overline{\theta}_i}d\omega^n = -B^n_{n-1,i} \omega^{n-1} -\sum_{j \leq n-2}B^n_{ji}\omega^j \in V^2_{\mathbb{C}},\]
which enables us to construct a new coframe $\{\tilde{\omega}^k\}_{k=1}^n$ such that
\[\begin{cases}
\tilde{\omega}^{n-1} = \omega^{n-1} + \frac{1}{B^n_{n-1,i}}\sum_{j \leq n-2}B^n_{ji}\omega^j, \\
\tilde{\omega}^{k}=\omega^k, \quad k \neq n-1.
\end{cases}\]
It is easy to verify that the new coframe $\{\tilde{\omega}^k\}_{k=1}^n$ is still strictly admissible by Lemma \ref{basis_change}. The claim can be proved by the same method, when \eqref{B} occurs.

When \eqref{C} holds, it yields that \[A^n_{i,n-1}=0, \quad 2 \leq i \leq n-2,\]
due to Lemma \ref{d^2=0}. The expression of $d \omega^{n}$ amounts to
\[\begin{aligned}
d\omega^{n} &= A^{n}_{1,n-1}\omega^1 \wedge \omega^{n-1} + B^{n}_{1,n-1} \omega^1 \wedge \overline{\omega}^{n-1} \\
              & + \sum_{i<j\leq n-2} A_{ij}^{n} \omega^i \wedge \omega^j + \sum_{i,j\leq n-2} B^{n}_{ij} \omega^i \wedge \overline{\omega}^j. \\
\end{aligned}\]
And thus, it follows that
\[\iota_{\theta_1} d\omega^n =A^{n}_{1,n-1} \omega^{n-1} + B^{n}_{1,n-1} \overline{\omega}^{n-1}
+ \sum_{1<j\leq n-2} A_{1j}^{n} \omega^j + \sum_{j\leq n-2} B^{n}_{1j} \overline{\omega}^j \in V^2_{\mathbb{C}},\]
which yields, after conjugation,
\[ \overline{B^{n}_{1,n-1}} \omega^{n-1} + \overline{A^{n}_{1,n-1}} \overline{\omega}^{n-1}
+ \sum_{j\leq n-2} \overline{B^{n}_{1j}} \omega^j + \sum_{1<j\leq n-2} \overline{A_{1j}^{n}} \overline{\omega}^j \in V^2_{\mathbb{C}}.\]
Since $|A^{n}_{1,n-1}|\neq |B^{n}_{1,n-1}|$, it implies that there exist $c_i,d_i\in \mathbb{C}$ such that
\[\omega^{n-1} + \sum_{i=1}^{n-2} c_i \omega^i + d_i \overline{\omega}^i \in V^2_{\mathbb{C}},\]
which is equivalent to, by Theorem \ref{strthmn-2},
\[\omega^{n-1} + \sum_{\begin{subarray}{c} 3 \leq i \leq n-2 \\ i \in \mathrm{Dpt}(J)\end{subarray}} c_i \omega^i + d_i \overline{\omega}^i \ \in \ V^2_{\mathbb{C}}.\]
Note that $\omega^i + \overline{\omega}^i \in V^1_{\mathbb{C}}$ for $i \in \mathrm{Dpt}(J)$ by Corollary \ref{basisV1}, which implies that
\[\omega^{n-1} + \sum_{\begin{subarray}{c} 3 \leq i \leq n-2 \\ i \in \mathrm{Dpt}(J)\end{subarray}} (c_i -d_i) \omega^i \in V^2_{\mathbb{C}}.\]
Therefore, a new coframe $\{\tilde{\omega}^k\}_{k=1}^n$ can be constructed as
\[\begin{cases}
\tilde{\omega}^{n-1} =\omega^{n-1} + \sum_{\begin{subarray}{c} 3 \leq i \leq n-2 \\ i \in \mathrm{Dpt}(J)\end{subarray}} (c_i -d_i) \omega^i, \\
\tilde{\omega}^{k}=\omega^k, \quad k \neq n-1,
\end{cases}\]
which is strictly admissible by Lemma \ref{basis_change}. Hence, the claim is proved and \eqref{n-1inDPT} is established.

Now suppose that the statement \eqref{n-1inDPT} does not hold. This implies that \eqref{A}, \eqref{B} and \eqref{C} all fails. It leads to \[B^{n}_{n-1,1} = 0, \quad B^{n}_{n-1,i} = 0\quad \text{for}\quad 2 \leq i \leq n-2, \quad\text{and} \quad|A^{n}_{1,n-1}| = |B^{n}_{1,n-1}|.\]
At this time, it also yields that $A^n_{i,n-1}=0$ for $2 \leq i \leq n-2$ by Lemma \ref{d^2=0}.
By the same argument as in the proof of the \eqref{C} case above, it follows that
\begin{equation}\label{domega-n}\begin{aligned}
d\omega^{n} &= A^{n}_{1,n-1}\omega^1 \wedge \omega^{n-1} + B^{n}_{1,n-1} \omega^1 \wedge \overline{\omega}^{n-1} \\
              & + \sum_{i<j\leq n-2} A_{ij}^{n} \omega^i \wedge \omega^j + \sum_{i,j\leq n-2} B^{n}_{ij} \omega^i \wedge \overline{\omega}^j, \\
\end{aligned}\end{equation}
\begin{equation}\label{left_case_1}
\iota_{\theta_1} d\omega^n =A^{n}_{1,n-1} \omega^{n-1} + B^{n}_{1,n-1} \overline{\omega}^{n-1}
+ \sum_{1<j\leq n-2} A_{1j}^{n} \omega^j + \sum_{j\leq n-2} B^{n}_{1j} \overline{\omega}^j \in V^2_{\mathbb{C}},
\end{equation}
where $|A^{n}_{1,n-1}| = |B^{n}_{1,n-1}| \neq 0$ due to the fact that the coefficients of $d\omega^n$, concerning with $\omega^{n-1}$ or $\overline{\omega}^{n-1}$, don't all vanish. By Theorem \ref{strthmn-2}, the equality \eqref{left_case_1} is equivalent to the following
\begin{equation}\label{left_case_2}
A^{n}_{1,n-1} \omega^{n-1} + B^{n}_{1,n-1} \overline{\omega}^{n-1}
+ \sum_{\begin{subarray}{c} 3\leq j\leq n-2 \\ j \in \mathrm{Dpt}(J)\end{subarray}} A_{1j}^{n} \omega^j
+ \sum_{\begin{subarray}{c} 3\leq j\leq n-2 \\ j \in \mathrm{Dpt}(J)\end{subarray}} B^{n}_{1j} \overline{\omega}^j \in V^2_{\mathbb{C}}.
\end{equation}
By the definition of $V^2$ and Corollary \ref{basisV1}, it follows that
\[\begin{aligned}
& d \Big(A^{n}_{1,n-1} \omega^{n-1} + B^{n}_{1,n-1} \overline{\omega}^{n-1}
+ \sum_{\begin{subarray}{c} 3\leq j\leq n-2 \\ j \in \mathrm{Dpt}(J)\end{subarray}} A_{1j}^{n} \omega^j
+ \sum_{\begin{subarray}{c} 3\leq j\leq n-2 \\ j \in \mathrm{Dpt}(J)\end{subarray}} B^{n}_{1j} \overline{\omega}^j\Big)\\
=\quad&\sum_{\begin{subarray}{c} 2 \leq p < q\\ p,q \in \mathrm{Dpt}(J) \end{subarray}}c_{pq} (\omega^p + \overline{\omega}^p) \wedge (\omega^q + \overline{\omega}^q) \\
& + \sum_{\begin{subarray}{c}  q \geq 2\\ q \in \mathrm{Dpt}(J) \end{subarray}}c_{1q}\omega^1 \wedge (\omega^q + \overline{\omega}^q)
+  \sum_{\begin{subarray}{c} q \geq 2 \\ q \in \mathrm{Dpt}(J) \end{subarray}}c_{q1} (\omega^q + \overline{\omega}^q) \wedge \overline{\omega}^1 \\
& + c_{11} \omega^1 \wedge \overline{\omega}^1,
\end{aligned}\]
for $c_{pq} \in \mathbb{C}$. Note that
\[ \overline{d(A^{n}_{1,n-1} \omega^{n-1} + B^{n}_{1,n-1} \overline{\omega}^{n-1})}
 \ = \ \frac{\overline{A^n_{1,n-1}}\overline{B^n_{1,n-1}}}{|B^n_{1,n-1}|^2}d (A^{n}_{1,n-1} \omega^{n-1} + B^{n}_{1,n-1} \overline{\omega}^{n-1}).\]
And $\{(\omega^p + \overline{\omega}^p) \wedge (\omega^q + \overline{\omega}^q)\}_{\!\!\!\!\!\!\begin{subarray}{c} 2 \leq p < q\\ p,q \in \mathrm{Dpt}(J) \end{subarray}}$,
$\,\{\omega^1 \wedge (\omega^q + \overline{\omega}^q)\}_{\!\!\!\!\!\!\!\begin{subarray}{c}  q \geq 2\\ q \in \mathrm{Dpt}(J) \end{subarray}}$, $\{(\omega^q + \overline{\omega}^q) \wedge \overline{\omega}^1\}_{\!\!\!\!\!\!\!\begin{subarray}{c} q \geq 2 \\ q \in \mathrm{Dpt}(J) \end{subarray}}$,
$\,\omega^1 \wedge \overline{\omega}^1$,
$\,\{d\omega^j\}_{\begin{subarray}{c} 3\leq j\leq n-2 \\ j \in \mathrm{Dpt}(J)\end{subarray}}$
are all $\mathbb{C}$-linearly independent, since $d\omega^j$ is of the type $(1,1)$ for $ j \in \mathrm{Dpt}(J)$ and $3\leq j\leq n-2 $, which has no summand of some multiple of $\omega^1 \wedge \overline{\omega}^1$ by \eqref{omegak} in Remark \ref{convention}. It implies that, for $ j \in \mathrm{Dpt}(J)$ and $3\leq j\leq n-2 $,
\[-(\overline{A^n_{1j}-B^n_{1j}}) \ = \ \frac{\overline{A^n_{1,n-1}}\overline{B^n_{1,n-1}}}{|B^n_{1,n-1}|^2}(A^n_{1j}-B^n_{1j}).\]
Then there exist complex numbers $\{e_j\}_{\begin{subarray}{c} 3\leq j\leq n-2 \\ j \in \mathrm{Dpt}(J) \end{subarray}} $ such that
\[A^n_{1,n-1}e_j + A^n_{1j} = B^n_{1,n-1}\overline{e}_j +B^n_{1j},\]
for instance, $e_j$ can be set as $-\frac{(A^n_{1j}-B^n_{1j})}{2A^n_{1,n-1}}$, which enables us to construct a new coframe
$\{ \tilde{\omega}^k\}_{k=1}^n$ as
\[\begin{cases}
\tilde{\omega}^{n-1} =\omega^{n-1} -\sum_{\begin{subarray}{c} 3 \leq j \leq n-2 \\ j \in \mathrm{Dpt}(J)\end{subarray}} e_j \omega^j, \\
\tilde{\omega}^{k}=\omega^k, \quad k \neq n-1.
\end{cases}\]
It is easy to verify that the coframe $\{\tilde{\omega}^k\}_{k=1}^n$ is strictly admissible by Lemma \ref{basis_change}
and \[A^n_{1,n-1}\tilde{\omega}^{n-1} + B^n_{1,n-1}\overline{\tilde{\omega}}^{n-1} \in V^2_{\mathbb{C}},\]
since \eqref{left_case_2} yields that
\[A^n_{1,n-1}\tilde{\omega}^{n-1} + B^n_{1,n-1}\overline{\tilde{\omega}}^{n-1}
+\sum_{\begin{subarray}{c} 3\leq j\leq n-2 \\ j \in \mathrm{Dpt}(J)\end{subarray}} (A^n_{1j}+A^n_{1,n-1}e_j) \omega^j +(B^n_{1j}+B^n_{1,n-1}\overline{e}_j) \overline{\omega}^j \ \in \ V^2_{\mathbb{C}},\]
and $\omega^j + \overline{\omega}^j \in V^1_{\mathbb{C}}$ for $j \in \mathrm{Dpt}(J)$ by Corollary \ref{basisV1}.
After a possible substitute of  $c\tilde{\omega}^{n-1}$ for $\tilde{\omega}^{n-1}$, where $|c|=1$, it yields that \[\tilde{\omega}^{n-1} + \overline{\tilde{\omega}}^{n-1} \in V^2_{\mathbb{C}}.\]
After another possible division of $\omega^n$ by a nonzero multiple, denoted by $\tilde{\omega}^n$, the expression of $d\tilde{\omega}^n$, from \eqref{domega-n}, becomes
\[\begin{aligned}
d\tilde{\omega}^{n} &= \tilde{\omega}^1 \wedge \tilde{\omega}^{n-1} +  \tilde{\omega}^1 \wedge \overline{\tilde{\omega}}^{n-1} \\
              & + \sum_{i<j\leq n-2} \tilde{A}_{ij}^{n} \tilde{\omega}^i \wedge \tilde{\omega}^j + \sum_{i,j\leq n-2} \tilde{B}^{n}_{ij} \tilde{\omega}^i \wedge \overline{\tilde{\omega}}^j, \\
\end{aligned}\]
Therefore the statement \eqref{n-1NinDPT} follows.

As to the statement \eqref{crnV2n-1dpt}, it is easy to see that, when \eqref{n-1inDPT} is established,
\[\sum_{k=1}^{n-1} (a_k\tilde{\omega}^k + b_k \overline{\tilde{\omega}}^k ) \ \in \ V^2_{\mathbb{C}} \ \, \Longleftrightarrow \ a_k =b_k,\ \, \text{for}\  k\in \mathrm{Dpt}(J)\ \text{and}\  3 \leq k \leq n-2,\]
from \eqref{crnV2n-2} in Theorem \ref{strthmn-2}. When \eqref{n-1NinDPT} is established,
\[\sum_{k=1}^{n-1} (a_k\tilde{\omega}^k + b_k \overline{\tilde{\omega}}^k ) \ \in \ V^2_{\mathbb{C}}\]
implies that $a_{n-1}=b_{n-1}$. Otherwise, there would exist $a'_k,b'_k \in \mathbb{C}$ such that
\[\tilde{\omega}^{n-1} + \sum_{k=1}^{n-2} a'_k \tilde{\omega}^k+b'_k\overline{\tilde{\omega}}^k \ \in \ V^2_{\mathbb{C}}\]
and thus
\[\tilde{\omega}^{n-1} + \sum_{\begin{subarray}{c} 3 \leq k \leq n-2 \\ k \in \mathrm{Dpt}(J) \end{subarray}} a'_k \tilde{\omega}^k+b'_k\overline{\tilde{\omega}}^k \ \in \ V^2_{\mathbb{C}}\]
by Theorem \ref{strthmn-2}, which would enable us to set a new $\hat{\omega}^{n-1}$ to be
\[\tilde{\omega}^{n-1} + \sum_{\begin{subarray}{c} 3 \leq k \leq n-2 \\ k \in \mathrm{Dpt}(J) \end{subarray}} (a'_k -b'_k) \tilde{\omega}^k\]
such that $\hat{\omega}^{n-1} \in V^2_{\mathbb{C}}$. This is a contradiction to the assumption of \eqref{n-1NinDPT}. After $a_{n-1}=b_{n-1}$ is proved, it follows easily that $a_k =b_k$ for $k\in \mathrm{Dpt}(J)$ and $3 \leq k \leq n-2$ by Theorem \ref{strthmn-2}. The converse  is rather clear, so we have completed the proof here.
\end{proof}

\begin{corollary}\label{estV2-dpt}
Let $(\mathfrak{g},J)$ be MaxN where $\nu(\mathfrak{g})=3$ and $\dim_{\mathbb{C}}\mathfrak{g}=n \geq 5$. Assume that $n-2 \in \mathrm{Dpt}(J)$ and let $\{\omega^i\}_{i=1}^n$ be a strictly admissible coframe, satisfying the property of Proposition \ref{n-2dptn-1}. Then a basis of $V^2 _{\mathbb{C}}$ can be chosen as the following
\begin{enumerate}
\item when $n \in \mathrm{Dpt}(J)$,
\[\{\omega^n+\overline{\omega}^n\!,\,\omega^{n-1}\!,\,\overline{\omega}^{n-1}\!,\,\omega^{k}+\overline{\omega}^{k}\!,\,\omega^{\ell},\overline{\omega}^{\ell},
\omega^2\!,\,\overline{\omega}^2\!,\,\omega^1\!,\,\overline{\omega}^1\},\]
\item when $n \notin \mathrm{Dpt}(J)$,
\begin{enumerate}
\item if $\omega^{n-1} \in V^2_{\mathbb{C}}$, after a new strictly admissible coframe $\{\tilde{\omega}^i\}_{i=1}^n$,
satisfying $\tilde{\omega}^i = \omega^i$ for $i\neq n$, is constructed,
\begin{gather*}
\{\tilde{\omega}^n+\overline{\tilde{\omega}}^n\!,\,\tilde{\omega}^{n-1}\!,\,\overline{\tilde{\omega}}^{n-1}\!,\,
\tilde{\omega}^{k}+\overline{\tilde{\omega}}^{k}\!,\, \tilde{\omega}^{\ell}\!,\, \overline{\tilde{\omega}}^{\ell}\!,\,
\tilde{\omega}^2\!,\,\overline{\tilde{\omega}}^2\!,\,\tilde{\omega}^1\!,\,\overline{\tilde{\omega}}^1\} \\
or \quad\{ \tilde{\omega}^{n-1}\!,\, \overline{\tilde{\omega}}^{n-1}\!,\,
\tilde{\omega}^{k}+\overline{\tilde{\omega}}^{k}\!,\, \tilde{\omega}^{\ell}\!,\, \overline{\tilde{\omega}}^{\ell}\!,\,
\tilde{\omega}^2\!,\, \overline{\tilde{\omega}}^2\!,\, \tilde{\omega}^1\!,\, \overline{\tilde{\omega}}^1 \},
\end{gather*}
\item if $\omega^{n-1} \notin V^2_{\mathbb{C}}$ but $\omega^{n-1} + \overline{\omega}^{n-1} \in V^2_{\mathbb{C}}$,
    \[\{\omega^{n-1}+\overline{\omega}^{n-1}\!,\, \omega^{k}+\overline{\omega}^{k}\!,\, \omega^{\ell},\overline{\omega}^{\ell}\!,\,
    \omega^2\!,\, \overline{\omega}^2\!,\, \omega^1\!,\, \overline{\omega}^1\},\]
\end{enumerate}
\end{enumerate}
where $k \in \mathrm{Dpt}(J)$ and $3 \leq k \leq n-2$, $\ell \notin \mathrm{Dpt}(J)$ and $3 \leq \ell \leq n-2$ for all the cases above.
\end{corollary}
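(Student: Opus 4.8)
The plan is to describe $V^2_{\mathbb{C}}$ by filtering $\mathfrak{g}^*_{\mathbb{C}}$ according to the leading coframe index and peeling off the contributions of the indices $1,\dots,n-2$, then $n-1$, then $n$ in turn. Writing $U_m=\mathrm{span}_{\mathbb{C}}\{\omega^i,\overline{\omega}^i:1\leq i\leq m\}$, the first step records, straight from the criterion \eqref{crnV2n-2} of Theorem \ref{strthmn-2}, that a basis of $V^2_{\mathbb{C}}\cap U_{n-2}$ is
\[
\{\omega^1,\overline{\omega}^1,\omega^2,\overline{\omega}^2\}\cup\{\omega^\ell,\overline{\omega}^\ell:\ell\notin\mathrm{Dpt}(J),\,3\leq\ell\leq n-2\}\cup\{\omega^k+\overline{\omega}^k:k\in\mathrm{Dpt}(J),\,3\leq k\leq n-2\}.
\]
Indeed each listed vector lies in $V^2_{\mathbb{C}}$ (the $\omega^\ell$ by Theorem \ref{elementsV2}, the $\omega^k+\overline{\omega}^k$ because they are $d$-closed, hence in $V^1_{\mathbb{C}}\subseteq V^2_{\mathbb{C}}$ by Corollary \ref{basisV1}, and $\omega^1,\omega^2$ directly), these vectors are manifestly $\mathbb{C}$-linearly independent, and \eqref{crnV2n-2} shows they exhaust $V^2_{\mathbb{C}}\cap U_{n-2}$. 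This block is common to every case, so all remaining work concerns the two top indices.

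Next I would settle $V^2_{\mathbb{C}}\cap U_{n-1}$ directly from Proposition \ref{n-2dptn-1}. In the coframe it furnishes we are in exactly one of its alternatives: either $\omega^{n-1}\in V^2_{\mathbb{C}}$, whence also $\overline{\omega}^{n-1}\in V^2_{\mathbb{C}}$ since $V^2_{\mathbb{C}}$ is conjugation-invariant, and the criterion \ref{crnV2n-1dpt} enlarges the block by $\{\omega^{n-1},\overline{\omega}^{n-1}\}$; or $\omega^{n-1}\notin V^2_{\mathbb{C}}$ while $\omega^{n-1}+\overline{\omega}^{n-1}\in V^2_{\mathbb{C}}$, and the block is enlarged only by $\{\omega^{n-1}+\overline{\omega}^{n-1}\}$. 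I would also record once and for all that the single $(1,0)$-vector $\omega^{n}$ never lies in $V^2_{\mathbb{C}}$: by Theorem \ref{elementsV2} this would force $n-1\in\mathrm{Dpt}(J)$, which is excluded here because $n-2\in\mathrm{Dpt}(J)$ gives $n-1\notin\mathrm{Dpt}(J)$ by Proposition \ref{dpttoindpt}.

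The decisive step is the index-$n$ contribution, i.e. the image of $V^2_{\mathbb{C}}$ under $\alpha=\sum_k(a_k\omega^k+b_k\overline{\omega}^k)\mapsto(a_n,b_n)$. The key observation is that, since $n-1\notin\mathrm{Dpt}(J)$, no vector of the basis of $V^1_{\mathbb{C}}$ in Corollary \ref{basisV1} involves $\omega^{n-1}$ or $\overline{\omega}^{n-1}$, so $\bigwedge^{2}V^1_{\mathbb{C}}$ contains no monomial carrying an index-$(n-1)$ factor; membership $d\alpha\in\bigwedge^{2}V^1_{\mathbb{C}}$ is tested as usual by Lemma \ref{wcrt}. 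As $d\omega^k$ has factors of index $\leq k-1\leq n-2$ for $k\leq n-1$, the only index-$(n-1)$ terms of $d\alpha$ come from $a_n\,d\omega^n+b_n\,d\overline{\omega}^n$, and their vanishing is a homogeneous $\mathbb{C}$-linear system $L(a_n,b_n)=0$ whose coefficients are the leading ($\omega^{n-1}$- and $\overline{\omega}^{n-1}$-)coefficients of $d\omega^n$ supplied by Lemma \ref{d^2=0}. Because the MaxN structure equation \eqref{streq_nil_cplx_mcn} forbids all these leading coefficients from vanishing, $L$ has rank at least one, so $\pi_n(V^2_{\mathbb{C}})=\ker L$ is either $0$- or $1$-dimensional. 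When $n\in\mathrm{Dpt}(J)$, the closed form $\omega^n+\overline{\omega}^n$ gives $L(1,1)=0$, so $\ker L=\mathbb{C}(1,1)$ and the single new generator is $\omega^n+\overline{\omega}^n$; moreover $n\in\mathrm{Dpt}(J)$ kills the $(2,0)$-part of $d\omega^n$ (Lemma \ref{d^2=0}), which excludes alternative \ref{n-1NinDPT} and forces $\omega^{n-1}\in V^2_{\mathbb{C}}$, producing case $(1)$. When $n\notin\mathrm{Dpt}(J)$ with only $\omega^{n-1}+\overline{\omega}^{n-1}\in V^2_{\mathbb{C}}$ (alternative \ref{n-1NinDPT}), the normal form of $d\tilde\omega^n$ carries the $(2,0)$-summand $\tilde\omega^1\wedge\tilde\omega^{n-1}$ alongside $\tilde\omega^1\wedge\overline{\tilde\omega}^{n-1}$; here $L$ has rank two, $\ker L=0$, and case $2(b)$ results.

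The genuinely delicate case, and the main obstacle, is $n\notin\mathrm{Dpt}(J)$ with $\omega^{n-1}\in V^2_{\mathbb{C}}$ (case $2(a)$), where $\ker L$ can be either $0$ or $1$: this dichotomy is exactly whether the two surviving leading coefficients of $d\omega^n$ have unequal or equal moduli, and it is what separates the two displayed sub-bases. When $\ker L$ is a line, conjugation-invariance of $V^2_{\mathbb{C}}$ forces its spanning pair $(a_n,b_n)$ to satisfy $|a_n|=|b_n|\neq0$, so a unit rescaling $\omega^n\mapsto c\omega^n$ normalizes it to the direction $(1,1)$; the remaining point is to absorb the lower-order tail, i.e. to modify $\tilde\omega^n=\omega^n+(\text{terms of index }\leq n-1)$, leaving $\{\omega^i\}_{i\neq n}$ fixed and preserving strict admissibility by Lemma \ref{basis_change}, so that $\tilde\omega^n+\overline{\tilde\omega}^n\in V^2_{\mathbb{C}}$. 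This absorption is a replica, one index higher, of the construction performed for $\omega^{n-1}$ in the proof of Proposition \ref{n-2dptn-1}, and I expect the bookkeeping of the tail to be the fiddliest part. Once the top generators are pinned down in each case, linear independence of every listed set is immediate (distinct indices give distinct $\omega$'s, and the sums $\omega^k+\overline{\omega}^k$ are independent of the $\omega^\ell,\overline{\omega}^\ell$), while spanning follows by subtracting the top generators from a general $\alpha\in V^2_{\mathbb{C}}$ to descend through $U_n\supseteq U_{n-1}\supseteq U_{n-2}$ and invoke the three criteria above in turn.
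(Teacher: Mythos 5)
Your proposal is correct and follows essentially the same route as the paper's own proof: the same case split governed by Proposition \ref{n-2dptn-1} and by whether $n\in\mathrm{Dpt}(J)$, the same use of Theorem \ref{strthmn-2} to handle the indices up to $n-2$, and the same conjugation-invariance argument to pin the top coefficients $(a_n,b_n)$ down to $0$ or to the line $a_n=b_n$, followed by the tail-absorption construction borrowed from Proposition \ref{n-2dptn-1}. The only caveat is that a priori one only has the inclusion $\pi_n(V^2_{\mathbb{C}})\subseteq\ker L$ rather than equality (so your claim that the sub-dichotomy in the case $n\notin\mathrm{Dpt}(J)$, $\omega^{n-1}\in V^2_{\mathbb{C}}$ is governed \emph{exactly} by the moduli of two leading coefficients of $d\omega^n$ is not justified), but since the corollary only asserts that the basis is one of the listed alternatives, and you use only the inclusion plus explicit realizations where it matters, this does not affect the conclusion.
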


\begin{proof}
Consider \begin{equation}\label{basisV2dpt}
\sum_{k=1}^n a_k\omega^k + b_k \overline{\omega}^k \in V^2_{\mathbb{C}},
\end{equation}
where $a_k,b_k \in \mathbb{C}$. The two cases can be separated as $n \in \mathrm{Dpt}(J)$ and $n \notin \mathrm{Dpt}(J)$.

When $n \in \mathrm{Dpt}(J)$ holds, it follows that the statement \eqref{n-1inDPT} of Proposition \ref{n-2dptn-1} is established and thus  $\omega^{n-1} \in V^2_{\mathbb{C}}$, otherwise the establishment of \eqref{n-1NinDPT} of Proposition \ref{n-2dptn-1} would lead to the expression
\[\begin{aligned}
    d\omega^{n} \ =  & \ \ \omega^1 \wedge \omega^{n-1} +  \omega^1 \wedge \overline{\omega}^{n-1} \\
    & + \sum_{i<j\leq n-2} A_{ij}^{n} \omega^i \wedge \omega^j + \sum_{i,j\leq n-2} B^{n}_{ij} \omega^i \wedge \overline{\omega}^j,
    \end{aligned}\]
which would imply $n \notin \mathrm{Dpt}(J)$, a contradiction. Hence, it yields that
\[\omega^n + \overline{\omega}^n \in V^1_{\mathbb{C}} \subseteq  V^2_{\mathbb{C}}, \]
and \eqref{basisV2dpt} implies $a_n =b_n$, since $a_n \neq b_n$ would lead to
\[\omega^n + \sum_{k=1}^{n-1}c_k \omega^k + d_k \overline{\omega}^k \in V^2_{\mathbb{C}},\]
for some $c_k,d_k \in \mathbb{C}$, which would contradict with $n-1 \notin \mathrm{Dpt}(J)$, by Lemma \ref{V2todpt} and Proposition \ref{n-2dptn-1}. Therefore, from \eqref{crn1} of Proposition \ref{n-2dptn-1}, \eqref{basisV2dpt} is equivalent to
\[a_n=b_n,\quad a_k=b_k,\quad for \quad k\in \mathrm{Dpt}(J)\quad and \quad 3 \leq k \leq n-2,\]
then the basis of $V^2_{\mathbb{C}}$ for $n\in\mathrm{Dpt}(J)$ in the corollary follows.

When $n \notin \mathrm{Dpt}(J)$ holds, another two cases still can be separated with respect to the statements \eqref{n-1inDPT} and \eqref{n-1NinDPT} of Proposition \ref{n-2dptn-1}
\begin{enumerate}
\item\label{inV2dpt} $\omega^{n-1} \in V^2_{\mathbb{C}}$,
\item\label{NinV2dpt} $\omega^{n-1} \notin V^2_{\mathbb{C}}$ but $\omega^{n-1} + \overline{\omega}^{n-1} \in V^2_{\mathbb{C}}$.
\end{enumerate}
If \eqref{NinV2dpt} occurs, it leads to $a_n=b_n=0$. Actually, by \eqref{n-1NinDPT} of Proposition \ref{n-2dptn-1}, it yields that
\[\begin{aligned}
    d\omega^{n}\, & = \ \omega^1 \wedge \omega^{n-1} +  \omega^1 \wedge \overline{\omega}^{n-1} \\
    &+ \sum_{i<j\leq n-2} A_{ij}^{n} \omega^i \wedge \omega^j + \sum_{i,j\leq n-2} B^{n}_{ij} \omega^i \wedge \overline{\omega}^j.
    \end{aligned}\]
The non-vanishing of $a_n$ or $b_n$ in \eqref{basisV2dpt} implies that $\omega^{n-1}$ and $\overline{\omega}^{n-1}$ appear in the expression
\[d \Big( \sum_{k=1}^n a_k\omega^k + b_k \overline{\omega}^k \Big).\]
However, \eqref{basisV2dpt} indicates that, by the definition of $V^2$ and Corollary \ref{basisV1},
\[\begin{aligned}
& d \Big(\sum_{k=1}^n a_k\omega^k + b_k \overline{\omega}^k \Big)\\
=\quad&\sum_{\begin{subarray}{c} 2 \leq p < q\\ p,q \in \mathrm{Dpt}(J) \end{subarray}}c_{pq} (\omega^p + \overline{\omega}^p) \wedge (\omega^q + \overline{\omega}^q) \\
& + \sum_{\begin{subarray}{c}  q \geq 2\\ q \in \mathrm{Dpt}(J) \end{subarray}}c_{1q}\omega^1 \wedge (\omega^q + \overline{\omega}^q)
+  \sum_{\begin{subarray}{c} q \geq 2 \\ q \in \mathrm{Dpt}(J) \end{subarray}}c_{q1} (\omega^q + \overline{\omega}^q) \wedge \overline{\omega}^1 \\
& + c_{11} \omega^1 \wedge \overline{\omega}^1,
\end{aligned}\]
for $c_{pq} \in \mathbb{C}$. Note that this would imply $n-1 \in \mathrm{Dpt}(J)$, which is a contradiction. Then it follows, from \eqref{crn2} of Proposition \ref{n-2dptn-1}, that \eqref{basisV2dpt} is equivalent to
\[a_{n-1}=b_{n-1},\quad a_k=b_k,\quad for \quad k\in \mathrm{Dpt}(J)\quad and \quad 3 \leq k \leq n-2.\]
Hence, the basis of $V^2_{\mathbb{C}}$ for this case is
\[\{\omega^{n-1}+\overline{\omega}^{n-1}\!,\, \omega^{k}+\overline{\omega}^{k}\!,\, \omega^{\ell}\!,\, \overline{\omega}^{\ell}\!,\,
    \omega^2\!,\, \overline{\omega}^2\!,\, \omega^1\!,\, \overline{\omega}^1\},\]
where $k \in \mathrm{Dpt}(J)$ and $3 \leq k \leq n-2$, $\ell \notin \mathrm{Dpt}(J)$ and $3 \leq \ell \leq n-2$.

If \eqref{inV2dpt} holds, \eqref{basisV2dpt}
implies that \[a_n=b_n=0\quad or\quad |a_{n}| =| b_{n}| \neq 0,\]
since $a_n\neq0,b_n=0$ or $a_n=0,b_n\neq0$ would lead to $n-1 \in \mathrm{Dpt}(J)$, by Lemma \ref{V2todpt}, which is absurd from Proposition \ref{n-2dptn-1}, while $|a_n|\neq|b_n|$ would yield \[\omega^n + \sum_{k=1}^{n-1} a'_k \omega^k + b'_k \overline{\omega}^k \in V^2_{\mathbb{C}},\]
for some $a'_k, b'_k \in \mathbb{C}$, which reduces to the situation above.
Therefore, when \eqref{basisV2dpt} implies $a_n=b_n=0$, the basis of $V^2_{\mathbb{C}}$ for this case is clearly, from \eqref{crn1} of Proposition \ref{n-2dptn-1},
\[\{\omega^{n-1}\!,\, \overline{\omega}^{n-1}\!,\,
\omega^{k}+\overline{\omega}^{k}\!,\, \omega^{\ell}\!,\, \overline{\omega}^{\ell}\!,\,
\omega^2\!,\, \overline{\omega}^2\!,\, \omega^1,\overline{\omega}^1\},\]
where $k\in \mathrm{Dpt}(J)$ and $3 \leq k \leq n-2$, $\ell \notin \mathrm{Dpt}(J)$ and $3 \leq \ell \leq n-2$.
When \eqref{basisV2dpt} implies $|a_n|=|b_n|\neq0$, it reduces to
\[a_n \omega^n + b_n \overline{\omega}^n + \sum_{k=1}^{n-2}a_k \omega^k + b_k \overline{\omega}^k \in V^2_{\mathbb{C}},\]
by the argument after \eqref{left_case_2} in the proof of \eqref{n-1NinDPT} of Proposition \ref{n-2dptn-1},
which enable us to modify $\omega^n$ to $\tilde{\omega}^n$, with others left unchanged, such that the new coframe $\{\tilde{\omega}^k\}_{k=1}^n$ is still strictly admissible and
\[\tilde{\omega}^n + \overline{\tilde{\omega}}^n \in V^2_{\mathbb{C}}.\]
Under this new coframe $\{\tilde{\omega}^k\}_{k=1}^n$,
\[\sum_{k=1}^n a_k \tilde{\omega}^k + b_k \overline{\tilde{\omega}}^k \in V^2_{\mathbb{C}}\]
is equivalent to
\[a_n = b_n,\quad a_k=b_k,\quad for \quad k\in \mathrm{Dpt}(J)\quad and \quad 3 \leq k \leq n-2.\]
Therefore, the basis of $V^2_{\mathbb{C}}$ for this case in the corollary follows.
\end{proof}

\begin{proposition}\label{n-2indptn-1}
Let $(\mathfrak{g},J)$ be MaxN, where $\nu(\mathfrak{g})=3$ and $\dim_{\mathbb{C}}\mathfrak{g}=n \geq 5$, with a strictly admissible coframe $\{\omega^k\}_{k=1}^n$. Assume that $n-2 \notin \mathrm{Dpt}(J)$.
Then
\begin{enumerate}
\item\label{omeganindpt} $n \notin \mathrm{Dpt}(J)$.
\item\label{framen-1indpt} there exists no strictly admissible coframe $\{\hat{\omega}^k\}_{k=1}^n$ satisfying $\hat{\omega}^{n-1} \in V^2_{\mathbb{C}}$, but there does exist a strictly admissible coframe $\{\tilde{\omega}^k\}_{k=1}^n$ such that
      \[\tilde{\omega}^{k}=\omega^k\quad \text{for}\quad k \leq n-2,\quad \tilde{\omega}^{n-1} + \overline{\tilde{\omega}}^{n-1} \in V^2_{\mathbb{C}},\]
      and
    \[
    d\tilde{\omega}^{n} \ = \ \tilde{\omega}^1 \wedge \tilde{\omega}^{n-1} +  \tilde{\omega}^1 \wedge \overline{\tilde{\omega}}^{n-1}
    + \sum_{i<j\leq n-2} \tilde{A}_{ij}^{n} \tilde{\omega}^i \wedge \tilde{\omega}^j + \sum_{i,j\leq n-2} \tilde{B}^{n}_{ij} \tilde{\omega}^i \wedge \overline{\tilde{\omega}}^j. \]
\item\label{crnV2n-1indpt} under the strictly admissible coframe $\{\tilde{\omega}^k\}_{k=1}^n$ above, for $a_k,b_k \in \mathbb{C}$, it yields that \[\sum_{k=1}^{n-1}a_k\tilde{\omega}^k + b_k \overline{\tilde{\omega}}^k \in V^2_{\mathbb{C}}\] holds
    if and only if
\[a_{n-1}=b_{n-1},\quad a_k=b_k,\quad \text{for}\quad k\in \mathrm{Dpt}(J)\quad \text{and}\quad 3 \leq k \leq n-2.\]
\end{enumerate}
\end{proposition}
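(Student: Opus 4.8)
The plan is to treat Proposition \ref{n-2indptn-1} as the $n-2\notin\mathrm{Dpt}(J)$ counterpart of Proposition \ref{n-2dptn-1}, reading off the normal form of $d\omega^n$ directly from Proposition \ref{inpdtnext} instead of redoing the case analysis of Lemma \ref{d^2=0}. Applying Proposition \ref{inpdtnext} with $k=n-2$ (legitimate since $3\leq n-2\leq n-2$ as $n\geq 5$, and $n-2\notin\mathrm{Dpt}(J)$), statement \eqref{fst511} gives $\omega^{n-1}\in V^3_{\mathbb{C}}\setminus V^2_{\mathbb{C}}$ while statement \eqref{scd511} yields
\[ d\omega^{n}=A^{n}_{1,n-1}\omega^1\wedge\omega^{n-1}+B^{n}_{1,n-1}\omega^1\wedge\overline{\omega}^{n-1}+\sum_{i<j\leq n-2}A_{ij}^{n}\omega^i\wedge\omega^j+\sum_{i,j\leq n-2}B^{n}_{ij}\omega^i\wedge\overline{\omega}^j, \]
with $|A^{n}_{1,n-1}|=|B^{n}_{1,n-1}|\neq 0$. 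For \eqref{omeganindpt} I observe that this expression carries the nonzero $(2,0)$-summand $A^{n}_{1,n-1}\,\omega^1\wedge\omega^{n-1}$; were $n\in\mathrm{Dpt}(J)$, condition \eqref{dpt} of Theorem \ref{equ_mcn_refined} together with Lemma \ref{d^2=0} would force $d\omega^n$ to be purely of type $(1,1)$, which is impossible, so $n\notin\mathrm{Dpt}(J)$, and the invariance of $\mathrm{Dpt}(J)$ (Lemma \ref{invariance_dpt}) makes this coframe-independent.

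For the non-existence half of \eqref{framen-1indpt}, I argue by contradiction: any strictly admissible $\{\hat{\omega}^k\}_{k=1}^n$ with $\hat{\omega}^{n-1}\in V^2_{\mathbb{C}}$ can, by Lemma \ref{invariance_dpt}, be written as $\hat{\omega}^{n-1}=\sum_{j\leq n-1}a_j\omega^j$ with $a_{n-1}\neq 0$, whence Lemma \ref{V2todpt} gives $n-2\in\mathrm{Dpt}(J)$, against the standing hypothesis. To produce the coframe of the second half, I use that $\nu(\mathfrak{g})=3$ forces $\omega^n\in V^3_{\mathbb{C}}$, so $d\omega^n\in\bigwedge^2 V^2_{\mathbb{C}}$ and hence $\iota_{\theta_1}d\omega^n\in V^2_{\mathbb{C}}$ by Lemma \ref{wcrt}; explicitly
\[ \iota_{\theta_1}d\omega^n=A^{n}_{1,n-1}\omega^{n-1}+B^{n}_{1,n-1}\overline{\omega}^{n-1}+\sum_{1<j\leq n-2}A^{n}_{1j}\omega^j+\sum_{j\leq n-2}B^{n}_{1j}\overline{\omega}^j\in V^2_{\mathbb{C}}, \]
which by Theorem \ref{strthmn-2} reduces modulo $V^2_{\mathbb{C}}$ to a relation in $\omega^{n-1},\overline{\omega}^{n-1}$ and the $\omega^j,\overline{\omega}^j$ with $j\in\mathrm{Dpt}(J)$, $3\leq j\leq n-2$.

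The construction of $\tilde{\omega}^{n-1}$ then splits according to whether $n-1\in\mathrm{Dpt}(J)$. If $n-1\notin\mathrm{Dpt}(J)$, I solve $A^{n}_{1,n-1}e_j+A^{n}_{1j}=B^{n}_{1,n-1}\overline{e}_j+B^{n}_{1j}$ for the relevant $j$ (possible because $|A^{n}_{1,n-1}|=|B^{n}_{1,n-1}|$), set $\tilde{\omega}^{n-1}=\omega^{n-1}-\sum_{j}e_j\omega^j$, normalize by a unimodular scalar to reach $\tilde{\omega}^{n-1}+\overline{\tilde{\omega}}^{n-1}\in V^2_{\mathbb{C}}$, and invoke Lemma \ref{basis_change} (valid since $n-1\notin\mathrm{Dpt}(J)$) for strict admissibility, exactly as in statement \eqref{n-1NinDPT} of Proposition \ref{n-2dptn-1}. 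If $n-1\in\mathrm{Dpt}(J)$, Corollary \ref{basisV1} already gives $\omega^{n-1}+\overline{\omega}^{n-1}\in V^1_{\mathbb{C}}\subseteq V^2_{\mathbb{C}}$, so I may take $\tilde{\omega}^{n-1}=\omega^{n-1}$; the displayed $V^2_{\mathbb{C}}$-relation, read modulo $V^2_{\mathbb{C}}$ against the linear independence of $\{[\omega^{n-1}]\}\cup\{[\omega^j]:j\in\mathrm{Dpt}(J),\,3\leq j\leq n-2\}$ in $V^3_{\mathbb{C}}/V^2_{\mathbb{C}}$ (itself obtained from Lemma \ref{V2todpt} and the parity pattern of Theorem \ref{strthmn-2}), forces $A^{n}_{1,n-1}=B^{n}_{1,n-1}$ and symmetric lower coefficients, so no modification is needed. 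In either case a final rescaling of $\omega^n$ produces the asserted shape of $d\tilde{\omega}^n$. For \eqref{crnV2n-1indpt} I follow statement \eqref{crn2} of Proposition \ref{n-2dptn-1}: subtracting $b_{n-1}(\tilde{\omega}^{n-1}+\overline{\tilde{\omega}}^{n-1})\in V^2_{\mathbb{C}}$ from the given combination and applying Lemma \ref{V2todpt} shows $a_{n-1}=b_{n-1}$ (else $n-2\in\mathrm{Dpt}(J)$), after which \eqref{crnV2n-2} of Theorem \ref{strthmn-2} yields $a_k=b_k$ for $k\in\mathrm{Dpt}(J)$, $3\leq k\leq n-2$; the converse is immediate from $\tilde{\omega}^{n-1}+\overline{\tilde{\omega}}^{n-1}\in V^2_{\mathbb{C}}$ and $\omega^k+\overline{\omega}^k\in V^1_{\mathbb{C}}\subseteq V^2_{\mathbb{C}}$ (Corollary \ref{basisV1}).

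I expect the main obstacle to be exactly the strict admissibility of the modified coframe in \eqref{framen-1indpt}: the modulus matching $|A^{n}_{1,n-1}|=|B^{n}_{1,n-1}|$ only lets one normalize the symmetric combination $\tilde{\omega}^{n-1}+\overline{\tilde{\omega}}^{n-1}$ into $V^2_{\mathbb{C}}$ rather than $\tilde{\omega}^{n-1}$ itself, and Lemma \ref{basis_change} applies only when $n-1\notin\mathrm{Dpt}(J)$, so the case $n-1\in\mathrm{Dpt}(J)$ must be cleared separately by the linear-independence argument in $V^3_{\mathbb{C}}/V^2_{\mathbb{C}}$ sketched above; keeping this bookkeeping consistent with the invariance of $\mathrm{Dpt}(J)$ is the delicate point.
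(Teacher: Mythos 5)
Your proposal is correct and follows essentially the same route as the paper: statement $(1)$ from the $(2,0)$-summand in the normal form of $d\omega^n$ supplied by Proposition \ref{inpdtnext}, the nonexistence half of $(2)$ via Lemma \ref{V2todpt} (the paper cites Theorem \ref{elementsV2}, which is equivalent), the construction of $\tilde{\omega}^{n-1}$ by the argument after \eqref{left_case_2} of Proposition \ref{n-2dptn-1}, and $(3)$ by first forcing $a_{n-1}=b_{n-1}$ and then invoking \eqref{crnV2n-2} of Theorem \ref{strthmn-2}. Your explicit case split on whether $n-1\in\mathrm{Dpt}(J)$ is in fact a welcome refinement rather than a detour, since Lemma \ref{basis_change} only licenses modifying $\omega^{n-1}$ when $n-1\notin\mathrm{Dpt}(J)$, a point the paper's bare citation of the earlier argument leaves implicit.
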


\begin{proof}
It follows that, from Proposition \ref{inpdtnext}, the expression $d\omega^{n}$ is
\begin{equation}\label{domegan-indpt}\begin{aligned}
d\omega^{n} &= A^{n}_{1,n-1}\omega^1 \wedge \omega^{n-1} + B^{n}_{1,n-1} \omega^1 \wedge \overline{\omega}^{n-1} \\
              & + \sum_{i<j\leq n-2} A_{ij}^{n} \omega^i \wedge \omega^j + \sum_{i,j\leq n-2} B^{n}_{ij} \omega^i \wedge \overline{\omega}^j, \\
\end{aligned}\end{equation}
where $|A^{n}_{1,n-1}|=|B^{n}_{1,n-1}|\neq 0$ and $\omega^{n-1}$ satisfies
\begin{equation}\label{left_case_3}
A_{1,n-1}^{n} \omega^{n-1} + B_{1,n-1}^{n} \overline{\omega}^{n-1} + \sum_{1<j\leq n-2}A_{1j}^{n} \omega^j + \sum_{j \leq n-2}B^{n}_{1j} \overline{\omega}^j \in V^2_{\mathbb{C}}.
\end{equation}
Hence $n \notin \mathrm{Dpt}(J)$, since $d\omega^n$ is not of type $(1,1)$. The existence of a strictly admissible coframe $\{\tilde{\omega}^k\}_{k=1}^n$, satisfying $\tilde{\omega}^{n-1} \in V^2_{\mathbb{C}}$,
would lead to $n-2 \in \mathrm{Dpt}(J)$, by Theorem \ref{elementsV2}, which is a contradiction. By the argument after \eqref{left_case_2} in the proof of \eqref{n-1NinDPT} of Proposition \ref{n-2dptn-1}, \eqref{left_case_3} implies that there exists a strictly admissible coframe $\{\tilde{\omega}^k\}_{k=1}^n$, such that
\[\tilde{\omega}^{k}=\omega^k\quad \text{for}\quad k \leq n-2,\quad
\tilde{\omega}^{n-1} + \overline{\tilde{\omega}}^{n-1} \in V^2_{\mathbb{C}},\]
and at this time, by \eqref{domegan-indpt}, the expression of $d\tilde{\omega}^n$ becomes
\[\begin{aligned}
d\tilde{\omega}^{n} &= \tilde{\omega}^1 \wedge \tilde{\omega}^{n-1} +  \tilde{\omega}^1 \wedge \overline{\tilde{\omega}}^{n-1} \\
&+ \sum_{i<j\leq n-2} \tilde{A}_{ij}^{n} \tilde{\omega}^i \wedge \tilde{\omega}^j + \sum_{i,j\leq n-2} \tilde{B}^{n}_{ij} \tilde{\omega}^i \wedge \overline{\tilde{\omega}}^j. \end{aligned}\]

As to the statement \eqref{crnV2n-1indpt},
\[\sum_{k=1}^{n-1}a_k\tilde{\omega}^k + b_k \overline{\tilde{\omega}}^k \in V^2_{\mathbb{C}}\]
implies that $a_{n-1}=b_{n-1}$, by the same proof of the statement \eqref{crn2} in Proposition \ref{n-2dptn-1}, where the nonexistence of a strictly admissible coframe $\{\tilde{\omega}^k\}_{k=1}^n$, satisfying $\tilde{\omega}^{n-1} \in V^2_{\mathbb{C}}$, is used. Then it follows easily that $a_k =b_k$ for $k\in \mathrm{Dpt}(J)$ and $3 \leq k \leq n-2$ by Theorem \ref{strthmn-2}.
The opposite direction is rather clear. Therefore, the proof is completed.
\end{proof}

\begin{corollary}\label{estV2-indpt}
Let $(\mathfrak{g},J)$ be MaxN, where $\nu(\mathfrak{g})=3$ and $\dim_{\mathbb{C}}\mathfrak{g}=n\geq 5$. Assume that $n-2 \notin \mathrm{Dpt}(J)$ and let $\{\omega^i\}_{i=1}^n$ be a strictly admissible coframe satisfying the property of Proposition \ref{n-2indptn-1}. Then a basis of $V^2_{\mathbb{C}}$ can be chosen as
\begin{enumerate}
\item when $n-1 \in \mathrm{Dpt}(J)$, after a new strictly admissible coframe $\{\tilde{\omega}^i\}_{i=1}^n$,
satisfying $\tilde{\omega}^i = \omega^i$ for $i\neq n$, is applied,
\begin{gather*}
\{ \tilde{\omega}^{n-1}+\overline{\tilde{\omega}}^{n-1}\!,\,
\tilde{\omega}^{k}+\overline{\tilde{\omega}}^{k}\!,\, \tilde{\omega}^{\ell}\!,\, \overline{\tilde{\omega}}^{\ell}\!,\,
\tilde{\omega}^2\!,\, \overline{\tilde{\omega}}^2\!,\, \tilde{\omega}^1\!,\, \overline{\tilde{\omega}}^1 \} \\
\text{or} \quad \{\tilde{\omega}^n\!,\, \overline{\tilde{\omega}}^n\!,\, \tilde{\omega}^{n-1}+\overline{\tilde{\omega}}^{n-1}\!,\,
\tilde{\omega}^{k}+\overline{\tilde{\omega}}^{k}\!,\, \tilde{\omega}^{\ell}\!,\, \overline{\tilde{\omega}}^{\ell}\!,\,
\tilde{\omega}^2\!,\, \overline{\tilde{\omega}}^2\!,\, \tilde{\omega}^1\!,\, \overline{\tilde{\omega}}^1\}\\
\text{or}\quad \{\tilde{\omega}^n+\overline{\tilde{\omega}}^n\!,\, \tilde{\omega}^{n-1}+\overline{\tilde{\omega}}^{n-1}\!,\,
\tilde{\omega}^{k}+\overline{\tilde{\omega}}^{k}\!,\, \tilde{\omega}^{\ell}\!,\, \overline{\tilde{\omega}}^{\ell}\!,\,
\tilde{\omega}^2\!,\, \overline{\tilde{\omega}}^2\!,\, \tilde{\omega}^1\!,\, \overline{\tilde{\omega}}^1\}\!,\,
\end{gather*}
\item when $n-1 \notin \mathrm{Dpt}(J)$,
\[\{ \omega^{n-1}+\overline{\omega}^{n-1}\!,\, \omega^k+\overline{\omega}^k\!,\, \omega^{\ell}\!,\, \overline{\omega}^{\ell}\!,\,
\omega^2\!,\, \overline{\omega}^2\!,\, \omega^1\!,\, \overline{\omega}^1 \},\]
\end{enumerate}
where $k \in \mathrm{Dpt}(J)$ and $3 \leq k \leq n-2$, $\ell \notin \mathrm{Dpt}(J)$ and $3 \leq \ell \leq n-2$ for all the cases above.
\end{corollary}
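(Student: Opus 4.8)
The plan is to follow the template of the proof of Corollary~\ref{estV2-dpt}, testing a general element
\[\sum_{k=1}^n a_k\omega^k + b_k \overline{\omega}^k \in V^2_{\mathbb{C}}\]
in the strictly admissible coframe supplied by Proposition~\ref{n-2indptn-1}, so that $\omega^{n-1}+\overline{\omega}^{n-1}\in V^2_{\mathbb{C}}$ and $d\omega^n$ has the shape in statement \eqref{framen-1indpt} of that proposition, whose only terms carrying the index $n-1$ assemble into $\omega^1\wedge(\omega^{n-1}+\overline{\omega}^{n-1})$. All the constraints on $\{a_k,b_k\}_{k\leq n-1}$ are already packaged in statement \eqref{crnV2n-1indpt} of Proposition~\ref{n-2indptn-1}, namely $a_{n-1}=b_{n-1}$ together with $a_k=b_k$ for $k\in\mathrm{Dpt}(J)$ and $3\leq k\leq n-2$; so the genuine work is to pin down the admissible pairs $(a_n,b_n)$ and, where needed, to replace $\omega^n$ by a suitable $\tilde{\omega}^n$ (leaving $\tilde{\omega}^i=\omega^i$ for $i\neq n$) that isolates the top generator. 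I would split on whether $n-1\in\mathrm{Dpt}(J)$.

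First I would treat $n-1\notin\mathrm{Dpt}(J)$, which yields the single basis of case~(2). Here I claim $a_n=b_n=0$: because $d\omega^i$ for $i\leq n-1$ never involves $\omega^{n-1}$ or $\overline{\omega}^{n-1}$, the $\omega^{n-1}$- and $\overline{\omega}^{n-1}$-components of $d\big(\sum_k a_k\omega^k+b_k\overline{\omega}^k\big)$ come only from $k=n$ and equal $(a_n\omega^1+b_n\overline{\omega}^1)\wedge(\omega^{n-1}+\overline{\omega}^{n-1})$. Since $n-1\notin\mathrm{Dpt}(J)$, Corollary~\ref{basisV1} shows that $\wedge^2V^1_{\mathbb{C}}$ contains no term in $\omega^{n-1}$ or $\overline{\omega}^{n-1}$, so membership in $V^2_{\mathbb{C}}$ forces $a_n\omega^1+b_n\overline{\omega}^1=0$, i.e.\ $a_n=b_n=0$. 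The membership then collapses to $\sum_{k=1}^{n-1}a_k\omega^k+b_k\overline{\omega}^k\in V^2_{\mathbb{C}}$, and statement \eqref{crnV2n-1indpt} of Proposition~\ref{n-2indptn-1} finishes the count, the surviving generators being exactly those listed.

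The case $n-1\in\mathrm{Dpt}(J)$ is where the three alternative bases arise. Now $\omega^{n-1}+\overline{\omega}^{n-1}\in V^1_{\mathbb{C}}$ by Corollary~\ref{basisV1}, so the $(n-1)$-components above lie in $\wedge^2V^1_{\mathbb{C}}$ automatically and impose no condition on $(a_n,b_n)$; any residual constraint must therefore come from the lower-order remainder of $d\omega^n$ (indices $\leq n-2$). I would first test, via Theorem~\ref{elementsV2}, whether $\omega^n\in V^2_{\mathbb{C}}$: this holds exactly when that remainder is itself of the form $\sum_j a_j\,\omega^1\wedge(\omega^j+\overline{\omega}^j)$, in which case $a_n,b_n$ are free and I obtain the second basis with both $\tilde{\omega}^n$ and $\overline{\tilde{\omega}}^n$ present. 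If $\omega^n\notin V^2_{\mathbb{C}}$, I would run the contraction argument of statement \eqref{n-1NinDPT} of Proposition~\ref{n-2dptn-1}: from $\omega^n\in V^3_{\mathbb{C}}$ one has $\iota_{\theta_1}d\omega^n\in V^2_{\mathbb{C}}$, and together with the modulus relation $|A^n_{1,n-1}|=|B^n_{1,n-1}|$ and the conjugation symmetry this separates the surviving possibilities into $a_n=b_n$ (where, after the single change $\omega^n\rightsquigarrow\tilde{\omega}^n$ justified by Lemma~\ref{basis_change}, one has $\tilde{\omega}^n+\overline{\tilde{\omega}}^n\in V^2_{\mathbb{C}}$, giving the third basis) and $a_n=b_n=0$ (giving the first basis). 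In every branch the remaining $k\leq n-1$ constraints are inherited from statement \eqref{crnV2n-1indpt} of Proposition~\ref{n-2indptn-1}, and the listed generators follow by reading off the free parameters.

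The step I expect to be the main obstacle is the trichotomy in the case $n-1\in\mathrm{Dpt}(J)$: one must verify that these three are the only mutually exclusive possibilities, that they are correctly distinguished by the form of the lower-order remainder of $d\omega^n$, and that the single coframe change $\tilde{\omega}^i=\omega^i$ ($i\neq n$) can always be arranged to place $\tilde{\omega}^n$ or $\tilde{\omega}^n+\overline{\tilde{\omega}}^n$ cleanly into $V^2_{\mathbb{C}}$ while preserving strict admissibility. The accompanying bookkeeping---the conjugation symmetry forced by $|A^n_{1,n-1}|=|B^n_{1,n-1}|$ and the $\mathbb{C}$-linear independence of the $\wedge^2V^1_{\mathbb{C}}$ generators of Corollary~\ref{basisV1}---is routine but must be carried out carefully so that no hidden constraint on $(a_n,b_n)$ is overlooked.
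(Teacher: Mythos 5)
Your overall strategy coincides with the paper's: test a general element $\sum_{k=1}^n a_k\omega^k+b_k\overline{\omega}^k$ of $V^2_{\mathbb{C}}$, import all constraints on the indices $k\leq n-1$ from statement \eqref{crnV2n-1indpt} of Proposition \ref{n-2indptn-1}, and split on whether $n-1\in\mathrm{Dpt}(J)$. Your treatment of the case $n-1\notin\mathrm{Dpt}(J)$ is exactly the paper's argument (the $\omega^{n-1}$-, $\overline{\omega}^{n-1}$-components of the differential come only from $a_nd\omega^n+b_nd\overline{\omega}^n$, and $\wedge^2V^1_{\mathbb{C}}$ contains no such components, forcing $a_n=b_n=0$) and is correct.

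The gap is in the trichotomy for $n-1\in\mathrm{Dpt}(J)$. You branch on whether $\omega^n$ itself lies in $V^2_{\mathbb{C}}$, i.e.\ (via Theorem \ref{elementsV2}) on whether the lower-order remainder of $d\omega^n$ is literally of the form $\sum_j a_j\,\omega^1\wedge(\omega^j+\overline{\omega}^j)$, and you then assert that when $\omega^n\notin V^2_{\mathbb{C}}$ the only surviving possibilities are ``$a_n=b_n$'' or ``$a_n=b_n=0$''. That assertion is not justified and can fail: it may happen that $\omega^n\notin V^2_{\mathbb{C}}$ while a lower-order modification $\tilde{\omega}^n=\omega^n+\sum_{j\leq n-1}e_j\omega^j$ does lie in $V^2_{\mathbb{C}}$ --- for instance when the part of the remainder of $d\omega^n$ not in $\wedge^2V^1_{\mathbb{C}}$ cancels against $\sum_j e_j\,d\omega^j$ for suitable $j\in\mathrm{Dpt}(J)$ with $3\leq j\leq n-2$ (each such $\omega^j$ lies in $V^3_{\mathbb{C}}\setminus V^2_{\mathbb{C}}$, so its differential has a nontrivial component outside $\wedge^2V^1_{\mathbb{C}}$). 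In that scenario $V^2_{\mathbb{C}}$ contains an element with $a_n=1$, $b_n=0$, your claimed dichotomy is violated, and the correct conclusion is the second basis rather than the first or third. The invariant to branch on is therefore not membership of $\omega^n$ in $V^2_{\mathbb{C}}$ but the set of pairs $(a_n,b_n)$ that membership in $V^2_{\mathbb{C}}$ permits: either $a_n=b_n=0$ always (first basis); or some element realizes $|a_n|\neq|b_n|$, in which case combining it with its conjugate and invoking Theorem \ref{strthmn-2} and Lemma \ref{basis_change} yields $\tilde{\omega}^n\in V^2_{\mathbb{C}}$ (second basis); or nonzero $(a_n,b_n)$ occur but only with $|a_n|=|b_n|$, in which case the argument after \eqref{left_case_2} in Proposition \ref{n-2dptn-1} yields $\tilde{\omega}^n+\overline{\tilde{\omega}}^n\in V^2_{\mathbb{C}}$ (third basis). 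This is how the paper draws the case division; with that correction the rest of your argument goes through.
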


\begin{proof}
Consider \begin{equation}\label{basisV2indpt}
\sum_{k=1}^n a_k\omega^k + b_k \overline{\omega}^k \in V^2_{\mathbb{C}},
\end{equation}
where $a_k,b_k \in \mathbb{C}$. The two cases can be separated as $n-1 \in \mathrm{Dpt}(J)$ and $n-1 \notin \mathrm{Dpt}(J)$.

When $n-1 \notin \mathrm{Dpt}(J)$, it follows that $a_n=b_n=0$. Actually, from \eqref{framen-1indpt} of Proposition \ref{n-2indptn-1},
it yields that
\[\begin{aligned}
    d\omega^{n} &= \omega^1 \wedge \omega^{n-1} +  \omega^1 \wedge \overline{\omega}^{n-1} \\
    &+ \sum_{i<j\leq n-2} A_{ij}^{n} \omega^i \wedge \omega^j + \sum_{i,j\leq n-2} B^{n}_{ij} \omega^i \wedge \overline{\omega}^j.
    \end{aligned}\]
The non-vanishing of $a_n$ or $b_n$ in \eqref{basisV2indpt} implies that $\omega^{n-1}$ and $\overline{\omega}^{n-1}$ appear in the expression
\[d \Big( \sum_{k=1}^n a_k\omega^k + b_k \overline{\omega}^k \Big).\]
However, \eqref{basisV2indpt} indicates that, by the definition of $V^2$ and Corollary \ref{basisV1},
\[\begin{aligned}
& d \Big(\sum_{k=1}^n a_k\omega^k + b_k \overline{\omega}^k \Big)\\
=\quad&\sum_{\begin{subarray}{c} 2 \leq p < q\\ p,q \in \mathrm{Dpt}(J) \end{subarray}}c_{pq} (\omega^p + \overline{\omega}^p) \wedge (\omega^q + \overline{\omega}^q) \\
& + \sum_{\begin{subarray}{c}  q \geq 2\\ q \in \mathrm{Dpt}(J) \end{subarray}}c_{1q}\omega^1 \wedge (\omega^q + \overline{\omega}^q)
+  \sum_{\begin{subarray}{c} q \geq 2 \\ q \in \mathrm{Dpt}(J) \end{subarray}}c_{q1} (\omega^q + \overline{\omega}^q) \wedge \overline{\omega}^1 \\
& + c_{11} \omega^1 \wedge \overline{\omega}^1,
\end{aligned}\]
for $c_{pq} \in \mathbb{C}$. This would imply $n-1 \in \mathrm{Dpt}(J)$, which is a contradiction. Therefore, from \eqref{crnV2n-1indpt} of Proposition \ref{n-2indptn-1}, the basis of $V^2_{\mathbb{C}}$ for this case is
\[\{ \omega^{n-1}+\overline{\omega}^{n-1},\omega^k+\overline{\omega}^k,\omega^{\ell},\overline{\omega}^{\ell},
\omega^2,\overline{\omega}^2,\omega^1,\overline{\omega}^1 \},\]
where $k \in \mathrm{Dpt}(J)$ and $3 \leq k \leq n-2$, $\ell \notin \mathrm{Dpt}(J)$ and $3 \leq \ell \leq n-2$.

When $n-1 \in \mathrm{Dpt}(J)$, it follows, by \eqref{framen-1indpt} of Proposition \ref{n-2indptn-1}, that
\[\omega^{n-1} \notin V^2_{\mathbb{C}} \quad \text{but}\quad \omega^{n-1} + \overline{\omega}^{n-1} \in V^1_{\mathbb{C}}.\]
Then \eqref{basisV2indpt} may imply that
\[a_n=b_n=0\quad \text{or}\quad |a_n| \neq |b_n|\quad \text{or}\quad |a_n|=|b_n|\neq0.\]
If $a_n=b_n=0$ is established, the basis of $V^2_{\mathbb{C}}$ for this case is, from \eqref{crnV2n-1indpt} of Proposition \ref{n-2indptn-1},
\[\{\omega^{n-1}+\overline{\omega}^{n-1},\omega^{k}+\overline{\omega}^k,\omega^{\ell},\overline{\omega}^{\ell},
\omega^{2},\overline{\omega}^{2},\omega^{1},\overline{\omega}^{1}\},\]
where $k\in \mathrm{Dpt}(J)$ and $3 \leq k \leq n-2$, $\ell \notin \mathrm{Dpt}(J)$ and $3 \leq \ell \leq n-2$.

If $|a_n|\neq |b_n|$ holds, \eqref{basisV2indpt} reduces to
\[ \omega^n + \sum_{k=1}^{n-1}c_k\omega^{k} + d_k \overline{\omega}^{k} \in V^2_{\mathbb{C}}, \]
for some $c_k,d_k \in \mathbb{C}$, which is equivalent to, from Theorem \ref{strthmn-2} and $n-1\in \mathrm{Dpt}(J)$,
\[ \omega^n + \sum_{\begin{subarray}{c} 3 \leq k \leq n-1 \\ k \in \mathrm{Dpt}(J)\end{subarray}}
c_k\omega^{k} + d_k \overline{\omega}^{k} \in V^2_{\mathbb{C}}. \]
Therefore, a new coframe $\{\tilde{\omega}^k\}_{k=1}^n$ can be constructed as
\[\begin{cases}
\tilde{\omega}^{n} =\omega^{n} + \sum_{\begin{subarray}{c} 3 \leq i \leq n-1 \\ i \in \mathrm{Dpt}(J)\end{subarray}} (c_i -d_i) \omega^i, \\
\tilde{\omega}^{k}=\omega^k, \quad k \neq n,
\end{cases}\]
which is still strictly admissible by Lemma \ref{basis_change}, such that $\tilde{\omega}^n \in V^2_{\mathbb{C}}$.
Under this new coframe $\{\tilde{\omega}^k\}_{k=1}^n$,
\[\sum_{k=1}^n a_k \tilde{\omega}^k + b_k \overline{\tilde{\omega}}^k \in V^2_{\mathbb{C}}\]
is equivalent to
\[a_{n-1} = b_{n-1},\quad a_k=b_k,\quad for \quad k\in \mathrm{Dpt}(J)\quad and \quad 3 \leq k \leq n-2,\]
and the basis of $V^2_{\mathbb{C}}$ for this case is \[\{\tilde{\omega}^n,\overline{\tilde{\omega}}^n,\tilde{\omega}^{n-1}+\overline{\tilde{\omega}}^{n-1},
\tilde{\omega}^{k}+\overline{\tilde{\omega}}^{k},\tilde{\omega}^{\ell},\overline{\tilde{\omega}}^{\ell},
\tilde{\omega}^2,\overline{\tilde{\omega}}^2,\tilde{\omega}^1,\overline{\tilde{\omega}}^1\},\]
where $k \in \mathrm{Dpt}(J)$ and $3 \leq k \leq n-2$, $\ell \notin \mathrm{Dpt}(J)$ and $3 \leq \ell \leq n-2$.

If $|a_n|=|b_n|\neq0$ holds, \eqref{basisV2indpt} reduces to
\[ a_n\omega^n + b_n \overline{\omega}^{n} + \sum_{k=1}^{n-1} a_k \omega^k + b_k \overline{\omega}^k \in V^2_{\mathbb{C}},\]
which is equivalent to, from Theorem \ref{strthmn-2} and $n-1 \in \mathrm{Dpt}(J)$,
\[ a_n\omega^n + b_n \overline{\omega}^{n} + \sum_{\begin{subarray}{c} 3 \leq k \leq n-1 \\ k \in \mathrm{Dpt}(J)\end{subarray}}
a_k \omega^k + b_k \overline{\omega}^k \in V^2_{\mathbb{C}}.\]
By the argument after \eqref{left_case_2} in the proof of \eqref{n-1NinDPT} of Proposition \ref{n-2dptn-1},
it enables us to modify $\omega^n$ to $\tilde{\omega}^n$, with others left unchanged, such that the new coframe $\{\tilde{\omega}^k\}_{k=1}^n$ is still strictly admissible and
\[\tilde{\omega}^n + \overline{\tilde{\omega}}^n \in V^2_{\mathbb{C}}.\]
Without loss of generality, let us assume the non-existence of a strictly admissible coframe $\{\hat{\omega}^k\}_{k=1}^n$ such that $\hat{\omega}^k = \tilde{\omega}^k$ for $k \neq n$ and $\hat{\omega}^n \in V^2_{\mathbb{C}}$ here, since it would reduce to the case $|a_n|\neq |b_n|$ above.
Under this assumption and the new coframe $\{\tilde{\omega}^k\}_{k=1}^n$,
\[\sum_{k=1}^n a_k \tilde{\omega}^k + b_k \overline{\tilde{\omega}}^k \in V^2_{\mathbb{C}}\]
is equivalent to
\[a_n = b_n,\quad a_k=b_k,\quad for \quad k\in \mathrm{Dpt}(J)\quad and \quad 3 \leq k \leq n-2.\]
Therefore, the basis of $V^2_{\mathbb{C}}$ for this case is
\[ \{\tilde{\omega}^n+\overline{\tilde{\omega}}^n,\tilde{\omega}^{n-1}+\overline{\tilde{\omega}}^{n-1},
\tilde{\omega}^{k}+\overline{\tilde{\omega}}^{k},\tilde{\omega}^{\ell},\overline{\tilde{\omega}}^{\ell},
\tilde{\omega}^2,\overline{\tilde{\omega}}^2,\tilde{\omega}^1,\overline{\tilde{\omega}}^1\}\]
where $k \in \mathrm{Dpt}(J)$ and $3 \leq k \leq n-2$, $\ell \notin \mathrm{Dpt}(J)$ and $3 \leq \ell \leq n-2$.
\end{proof}

\begin{proof}[Proof of Corollary \ref{firstB-MaxN}]
Note that
\[\dim_{\mathbb{R}}\mathfrak{g}^i = \dim_{\mathbb{C}}\mathfrak{g}^i_{\mathbb{C}}
= 2n - \dim_{\mathbb{C}} V_{\mathbb{C}}^i \quad \text{for} \quad 1 \leq i \leq 2.\]
It is easy to figure out that
\[|P|=\lfloor\frac{n-3}{2}\rfloor \quad \text{and} \quad |Q|=\lfloor \frac{n-4}{2} \rfloor,\]
where \[\begin{split} P = \{ k \in \mathbb{Z} \big| 3 \leq k \leq n-2,\, k\equiv n-2 \mod 2\}, \\
Q = \{ k \in \mathbb{Z} \big| 3 \leq k \leq n-2,\, k\not\equiv n-2\mod 2\},
\end{split}\]
and the number of elements in the set $P$ is denoted by $|P|$.

The two cases will be separated as $n-2 \in \mathrm{Dpt}(J)$ and $n-2 \notin \mathrm{Dpt}(J)$ to consider the problem. When $n-2 \in \mathrm{Dpt}(J)$, Remark \ref{dimV1}, Theorem \ref{strthmn-2}, Proposition \ref{n-2dptn-1} and Corollary \ref{estV2-dpt} imply that
\[
\lfloor \frac{n+3}{2}\rfloor \leq \dim_{\mathbb{C}}V^1_{\mathbb{C}} \leq  \lfloor \frac{n+5}{2}\rfloor,\quad
\lfloor \frac{3n-2}{2}\rfloor \leq \dim_{\mathbb{C}}V^2_{\mathbb{C}} \leq  \lfloor \frac{3n+2}{2}\rfloor.\]
In the case when $n-2 \notin \mathrm{Dpt}(J)$, Remark \ref{dimV1}, Theorem \ref{strthmn-2}, Proposition \ref{n-2indptn-1} and Corollary \ref{estV2-indpt} imply that
\[\lfloor \frac{n+2}{2}\rfloor \leq \dim_{\mathbb{C}}V^1_{\mathbb{C}} \leq  \lfloor \frac{n+4}{2}\rfloor,\quad
\lfloor \frac{3n-1}{2}\rfloor \leq \dim_{\mathbb{C}}V^2_{\mathbb{C}} \leq  \lfloor \frac{3n+3}{2}\rfloor.\]
It follows that \[\lfloor \frac{n+2}{2}\rfloor \leq \dim_{\mathbb{C}}V^1_{\mathbb{C}} \leq  \lfloor \frac{n+5}{2}\rfloor,\quad
\lfloor \frac{3n-2}{2}\rfloor \leq \dim_{\mathbb{C}}V^2_{\mathbb{C}} \leq  \lfloor \frac{3n+3}{2}\rfloor,\]
and thus
\[  \lfloor \frac{3n-4}{2}\rfloor \leq \dim_{\mathbb{R}} \mathfrak{g}^1 \leq \lfloor \frac{3n-1}{2}\rfloor ,
\quad \lfloor \frac{n-2}{2}\rfloor \leq \dim_{\mathbb{R}} \mathfrak{g}^2 \leq  \lfloor \frac{n+3}{2}\rfloor.\]
A  celebrated theorem of Nomizu \cite{Nom} asserts that
\[ b_1(M) = \dim_{\mathbb{R}} V^1 =\dim_{\mathbb{C}} V^1_{\mathbb{C}}, \]
so the conclusion of the corollary follows.
\end{proof}

\textbf{Acknowledgement}: The first and second named authors are grateful to
the Mathematics Department of Ohio State University for the nice research environment and the warm hospitality during their stay,
and also to Professor Bo Guan for his help and interest. The authors would like to thank Professors Fino and Ugarte for their suggestion
and comments regarding to this paper.

\end{document}